\documentclass[10pt,oneside,shortlabels, reqno]{amsart}
\setcounter{tocdepth}{4}
\usepackage{color}
\usepackage{enumitem}
\usepackage{amsbsy}
\usepackage{amstext}
\usepackage{amsthm}
\usepackage{amssymb}
\usepackage[unicode=true,pdfusetitle,
 bookmarks=true,bookmarksnumbered=false,bookmarksopen=false,
 breaklinks=false,pdfborder={0 0 0},pdfborderstyle={},backref=page,colorlinks=true]
 {hyperref}
\hypersetup{
 linkcolor=blue}

\makeatletter

\usepackage{mathrsfs}

\usepackage[all]{xy}
\newcommand{\xyL}[1]{%
\xydef@\xymatrixrowsep@{#1}
} 
\newcommand{\xyC}[1]{%
\xydef@\xymatrixcolsep@{#1}
} 

\theoremstyle{plain}
\newtheorem{thm}{\protect\theoremname}[section]
\theoremstyle{remark}
\newtheorem{rem}[thm]{\protect\remarkname}
\newcommand\thmsname{Theorem}
 \newcommand\nm@thmtype{thm}
 \theoremstyle{plain}
 
 \newenvironment{namedthm}[1]{
   \renewcommand\thmsname{#1}\renewcommand\nm@thmtype{namedtheorem}
   \begin{\nm@thmtype}
}
   {\end{\nm@thmtype}
}
\theoremstyle{remark}
\newtheorem{remarks}[thm]{Remarks}
\theoremstyle{definition}
\newtheorem{example}[thm]{\protect\examplename}
\theoremstyle{definition}
\newtheorem{defn}[thm]{\protect\definitionname}
\theoremstyle{plain}
\newtheorem{prop}[thm]{\protect\propositionname}
\theoremstyle{plain}
\newtheorem{lem}[thm]{\protect\lemmaname}
\theoremstyle{definition}
\newtheorem{void}[thm]{}

\numberwithin{equation}{section}

\makeatother

\providecommand{\definitionname}{Definition}
\providecommand{\examplename}{Example}
\providecommand{\lemmaname}{Lemma}
\providecommand{\propositionname}{Proposition}
\providecommand{\remarkname}{Remark}
\providecommand{\theoremname}{Theorem}

\begin{document}
\global\long\def\acc#1#2{\accentset#1#2}%

\title{NORMAL FORMS OF HYPERBOLIC LOGARITHMIC TRANSSERIES}
\author{D. PERAN$^1$, M. RESMAN$^2$, J.P. ROLIN$^3$, T. SERVI$^4$}
\thanks{This research of D. Peran and M. Resman is partially supported by the Croatian Science Foundation (HRZZ) grant UIP-2017-05-1020. The research of M. Resman is also partially supported by the Croatian Science Foundation (HRZZ) grant PZS-2019-02-3055 from Research Cooperability funded by the European Social Fund. The research of all four authors is partially supported by the Hubert-Curien 'Cogito' grant 2021/2022 \emph{Fractal and transserial approach to differential equations}.}
\subjclass[2010]{34C20, 37C25, 47H10, 39B12, 46A19, 26A12, 12J15}
\keywords{formal normal forms, logarithmic transseries, hyperbolic fixed point, linearization, Koenigs sequence, fixed point theory} 

\begin{abstract}
We find the normal forms of hyperbolic logarithmic transseries with
respect to parabolic logarithmic normalizing changes of variables. We provide a necessary
and sufficient condition on such transseries for the normal form to
be linear. The normalizing transformations are obtained via fixed point theorems,
and are given algorithmically, as limits of Picard sequences in appropriate
topologies.
\end{abstract}

\maketitle
\tableofcontents{}

\section{Introduction}

Given the germ $f$ of a real transformation at a fixed point, finding
a normal form for $f$ is a fundamental tool in the study of its dynamics.
The term \emph{normal form} usually means a simple representative
of the conjugacy class of $f$ with respect to a given type of conjugation\,:
formal, $\mathcal{C}^{\infty}$, holomorphic,\ldots{} There is a rising
interest in studying the normal form problem for one variable transformations
which admit a transserial asymptotic expansion, where by a \emph{transseries}
we mean a series whose monomials involve not only real powers, but
also iterated logarithms and exponentials of the variable (and the
coefficients are real numbers). This is motivated by the fact that
such transformations appear naturally in several problems in mathematics
and physics (see for example \cite{aniceto_ince_bacsar_schiappa:primer_resurgent_transseries_asymptotics}).
It is the case for example in the solutions of Dulac's problem on
non-accumulation of limit cycles on hyperbolic polycycles, in particular
in the works of Ecalle \cite{ecalle:dulac} and Ilyashenko \cite{ilyashenko:dulac},
where such series are investigated in detail. 

Unlike the normalization of classical formal power series in one variable,
which is completely understood \cite{carleson-gamelin:complex-dynamics,ilyashenko-yakovenko:lectures_analytic_differential_equations,Loray_series_divergentes},
the normalization of (formal) transseries turns out to be a more complicated
question. It is therefore important to solve it before tackling the
problem of normal forms for germs with a transserial asymptotic expansion.\\

In this spirit, our Main Theorem gives a complete answer for \emph{hyperbolic
logarithmic transseries}. For short, a transseries is called \emph{logarithmic}
if it does not involve the exponential function, and it is called
\emph{hyperbolic} if it can be written as $\lambda z+\cdots$, where
$\lambda>0$, $\lambda\ne1$ and $z$ is an infinitesimal
variable. The reader familiar with the theory of classification of
formal power series, holomorphic germs or $\mathcal{C}^{\infty}$
germs will be aware of the fact that, under the hyperbolicity hypothesis,
a germ can be \emph{linearized}, \emph{i.e. }it can be (formally,
analytically or smoothly) conjugated to its ``linear part'' $\lambda z$
via a \emph{parabolic} - \emph{i.e.} tangent to identity - change
of coordinates in the same regularity class. We show here that this
is not always the case for a hyperbolic logarithmic transseries. More
precisely, we completely describe the normal forms of such series,
and in particular we explain which ones can be actually linearized.
We may assume,without loss of generality, that $0<\lambda<1$. The normal forms for hyperbolic transseries for which $\lambda>1$ are then obtained by applying our results to inverse transseries. 
\\

This work continues the investigation started in \cite{mrrz1}. There,
the normalization problem is completely solved for so-called ``power-log''
transseries (namely, logarithmic transseries which do not involve
\emph{iterated-logarithmic} terms), whose first term has the form
$\lambda z^{\alpha}$, for some $\lambda>0$ and $\alpha>0$.
Here, we focus on \emph{hyperbolic} transseries, but we allow iterated-logarithmic
terms. The introduction of these new terms significantly complicates
the situation. It is important to study such transseries as well,
as they appear naturally in several problems, such as the solution
of Dulac's problem. The normalization of other types of logarithmic
transseries, in particular the parabolic ones ($\lambda=\alpha=1$), and strongly hyperbolic ones ($\lambda>0, \alpha \neq 1, \alpha >0$) will be studied in forthcoming papers.\\

The results of this paper improve those of \cite{mrrz1} in two directions.
First, as already said, we consider here iterated-logarithmic transseries.
Secondly, and possibly more importantly, we improve the proof methods
of \cite{mrrz1}. Indeed, formal normal forms were obtained there
via a transfinite term-by-term elimination process. Beside being convoluted
and highly non constructive (for example for computing the coefficients
of the monomials), this method has the drawback of making the proof
of the analogous statement for germs quite cumbersome. In particular,
the realization on the germ side of limit ordinal steps requires the
introduction of an inductive summation process, as it is shown in
\cite{mrrz:fatou}.\emph{ }Hence, one of our goals here is to dispense,
as far as possible, with this former impractical transfinite process.
We design a new algorithm based on a version of the Banach fixed point
theorem, inspired by \emph{Krasnoselskii's fixed point theorem} \cite{xiang_goergiev:noncompact_krasnoselskii_fixed_point}.
To this end, we endow our spaces of transseries with a distance which
gives them a structure of complete metric spaces. Then, the normalizing
transseries is obtained as the limit of a Picard sequence of a suitable
operator. It is worth noticing that by proceeding in this manner,
the normalizing map is built as the limit of a sequence of transseries
instead of the limit of a \emph{transfinite sequence.}

This particular fixed point result turns out to have many different
applications. Once the spaces of transseries are equipped with an
appropriate distance for which they are complete metric spaces, we
use it repeatedly to solve various functional or differential equations.
Hence we would like to stress its usefulness, even in problems which
are not directly related to normalization. 

We also introduce a generalized version of the Koenigs sequence associated
to a hyperbolic logarithmic transseries, and show that this sequence
converges to the normalizing transseries (even in the non-linearizable
case, in which the standard Koenigs sequence does not converge).\\

We will show in an upcoming paper how our new approach helps in solving
the normalization problem for some types of hyperbolic germs with
certain asymptotic expansions on complex domains, in particular germs
arising in Dulac's problem in the case of nondegenerate polycyles.\\

Before introducing all the necessary notation, let us add two comments
on the possible non-linearizability of hyperbolic transseries, which
marks an important difference with the case of classical hyperbolic
formal power series. First, this phenomenon has already been noticed
in \cite{mrrz1}, for the hyperbolic transseries considered there
(transseries without iterated logarithms). The normal form obtained
there is a special case of our result. 

Secondly, this phenomenon also appears in the framework of $\mathcal{C}^{1}$
maps. The pointwise or uniform convergence of Koenigs' sequences in
this context is studied in \cite{dewsnap-fischer:convergence-koenigs-sequences}.
The authors prove refinements of a result of O. Landford III which
establishes the uniform convergence, on a neigborhood in $0\in\mathbb{R}$,
of the Koenigs sequence of a map $f$ such that $f\left(0\right)=0$
and $f\left(x\right)=\lambda x+\mathcal{O}\left(x^{1+\varepsilon}\right)$
for some $\varepsilon>0$. They give in particular the example of
the map $f\left(x\right)=x\left(\lambda-\dfrac{1}{\log\left(x\right)}\right)$
in a positive neighborhood of the origin, and show that its Koenigs
sequence diverges on an interval $(0,a]$, for a small $a>0$. Their
main result in \cite[Th. 2.2]{dewsnap-fischer:convergence-koenigs-sequences}
gives a sufficient condition which guarantees the linearizability
and the convergence of the Koenigs sequence of a map $f\left(x\right)=\lambda x+\mathcal{O}\left(x\eta\left(x\right)\right)$,
where $0<\lambda<1$ and $\eta\left(x\right)$ is a product of powers
of iterated logarithms of the variable, such that $\eta\left(0\right)=0$.
It would be interesting in this respect, especially when the Koenigs
sequence diverges pointwise, to study the possible convergence, pointwise
or uniform, of our \emph{generalized} \emph{Koenigs sequences} \eqref{eq:koenigs}.

\section{The main result and its topological setting}

\subsection{Notation\label{subsec:Notation}}

We fix the following notation and describe the transseries we will
be working with.\\

Let $k\in\mathbb{N}$. We endow $\mathbb{R}\times\mathbb{Z}^{k}$
with the lexicographic order (denoted by $\leq$). For an infinitesimal
variable $z,$ we define $\boldsymbol{\ell}_{1}:=-\dfrac{1}{\ln\left(z\right)}$
and $\boldsymbol{\ell}_{k+1}:=\boldsymbol{\ell}_{k}\circ\boldsymbol{\ell}_{1}$
(for $k\in\mathbb{N}$). The \emph{monomials} $z^{\alpha}\boldsymbol{\ell}_{1}^{n_{1}}\cdots\boldsymbol{\ell}_{k}^{n_{k}}$,
where $\left(\alpha,n_{1},\ldots,n_{k}\right)\text{ are in }\mathbb{R}\times\mathbb{Z}^{k}$,
are ordered by the relation:
\[
z^{\alpha}\boldsymbol{\ell}_{1}^{n_{1}}\cdots\boldsymbol{\ell}_{k}^{n_{k}}\preceq z^{\beta}\boldsymbol{\ell}_{1}^{m_{1}}\cdots\boldsymbol{\ell}_{k}^{m_{k}}\text{ iff }\left(\beta,m_{1},\ldots,m_{k}\right)\leq\left(\alpha,n_{1},\ldots,n_{k}\right)\text{ in }\mathbb{R}\times\mathbb{Z}^{k}.
\]
In particular, $z^{\alpha}\preceq1$ for all $\alpha\in\mathbb{R}_{>0}$
(which corresponds to the fact that $z$ is infinitesimal) and $z\prec\boldsymbol{\ell}_{k}\prec\boldsymbol{\ell}_{m}$
iff $0<k<m$. 

For $k\in \mathbb{N}$, the class $\mathcal{L}_{k}$ of\emph{ logarithmic transseries
of depth $k$} is defined as the class of all transseries
$f$ of the form 
\begin{equation}
f=\sum_{\left(\alpha,n_{1},\ldots,n_{k}\right)\in\mathbb{R}\times\mathbb{Z}^{k}}a_{\alpha,n_{1},\ldots,n_{k}}\cdot z^{\alpha}\boldsymbol{\ell}_{1}^{n_{1}}\cdots\boldsymbol{\ell}_{k}^{n_{k}},\label{Transseries}
\end{equation}
where $\mathrm{Supp}\left(f\right):=\left\{ \left(\alpha,n_{1},\ldots,n_{k}\right)\in\mathbb{R}\times\mathbb{Z}^{k}:a_{\alpha,n_{1},\ldots,n_{k}}\ne0\right\} $,
called the \emph{support} of $f$, is a \emph{well-ordered} subset
of $\mathbb{R}_{\geq0}\times\mathbb{Z}^{k}$, and $a_{\alpha,n_{1},\ldots,n_{k}}\in\mathbb{R}$
for $(\alpha,n_{1},\ldots,n_{k})\in\mathrm{Supp}\left(f\right)$.
If in \eqref{Transseries} $\mathrm{Supp}\left(f\right)$ is a well-ordered
subset of $\mathbb{R}\times\mathbb{Z}^{k}$ instead of $\mathbb{R}_{\ge0}\times\mathbb{Z}^{k}$,
then we denote the space of such transseries by $\mathcal{L}_{k}^{\infty}$.
For $f\in\mathcal{L}_{k}$, $k\in\mathbb{N}$, as in \eqref{Transseries}
and $(\alpha,n_{1},\ldots,n_{k})\in\mathrm{Supp}\left(f\right)$,
we call 
\[
a_{\alpha,n_{1},\ldots,n_{k}}z^{\alpha}\boldsymbol{\ell}_{1}^{n_{1}}\cdots\boldsymbol{\ell}_{k}^{n_{k}}
\]
the\emph{ term of order }$(\alpha,n_{1},\ldots,n_{k})$ of\emph{ $f$}.
We set 
\[
[f]_{\alpha,n_{1},\ldots,n_{k}}:=a_{\alpha,n_{1},\ldots,n_{k}}.
\]
The \emph{order} $\mathrm{ord}\left(f\right)$ of $f$ is the minimal
element of $\mathrm{Supp}\left(f\right)$ (which exists because $\text{Supp}\left(f\right)$
is well-ordered). The \emph{leading term} of $f$, denoted by $\mathrm{Lt}(f)$,
and the \emph{leading monomial} of $f$, denoted by $\mathrm{Lm}(f)$,
are respectively the term and the monomial of order $\mathrm{ord}\left(f\right)$
of $f$.\\

For $0\leq m\leq k$, the set $\mathcal{L}_{m}$ will be identified
with a subset of $\mathcal{L}_{k}$ in the obvious way. The elements
of the increasing union $\mathfrak{L}:=\bigcup_{k\in\mathbb{N}}\mathcal{L}_{k}$
are the \emph{logarithmic transseries of finite depth} (or \emph{logarithmic
transseries}, for short). A transseries $f\in\mathfrak{L}$ is called
\emph{hyperbolic} if $\mathrm{Lt}\left(f\right)=\lambda z$, with
$\lambda>0$, $\lambda\neq1$, and \emph{parabolic} (or \emph{tangent to the
identity}) if $\mathrm{Lt}\left(f\right)=z$. The parabolic elements
of $\mathcal{L}_{k}$ form a group, denoted by $\mathcal{L}_{k}^{0}$,
with respect to the usual composition of transseries (see for example
\cite[Section 6]{vdd_macintyre_marker:log-exp_series}). Let $\mathfrak{L}^{0}:=\bigcup_{k\in\mathbb{N}}\mathcal{L}_{k}^{0}$.
We denote by $\mathcal{L}_{k}^{H}$ the collection of all elements
$f\in\mathcal{L}_{k}$ ``which do not contain logarithm in their
leading term'', \emph{i.e.} such that $\mathrm{Lm}\left(f\right)=z^{\alpha}$,
for some $\alpha>0$. When describing a transseries, we will often
use the acronym ``h.o.t.'' for ``higher order terms''. 
\begin{rem}
The collection $\mathfrak{L}$ introduced above is a subset of the
field of logarithmic-exponential series $\mathbb{R}\left(\left(t\right)\right)^{\mathrm{LE}}$
defined in \cite{vdd_macintyre_marker:log-exp_series}. Notice that
while the variable $t$ in $\mathbb{R}\left(\left(t\right)\right)^{\mathrm{LE}}$
is infinite, we prefer here to work with the infinitesimal variable
$z=t^{-1}$, which is more convenient in the framework of iteration
theory. Actually, $\mathfrak{L}$ is even contained in the subfield
$\mathbb{T}_{\mathrm{log}}$ of ``purely logarithmic transseries''
introduced in \cite{adh:towards_model_theory_transseries} and studied
from a model-theoretic point of view in \cite{gehret:nip_Tlog}. More
precisely, in $\mathfrak{L}$ the iterated logarithms are raised to
integer powers, whereas they are raised to arbitrary real powers in
$\mathbb{T}_{\mathrm{log}}$. 
\end{rem}

The following definitions and notation are related to the powers of
$z$ in a transseries. The monomials of our transseries can be organized
according to the powers of $z$, and this fact plays an important role
in the proof of our results. The \emph{support in $z$} of $f\in\mathfrak{L}$
is the set $\mathrm{Supp}_{z}(f)\subseteq\mathbb{R}_{\ge0}$ of all
exponents of $z$ in $f$. Likewise, the \emph{order of $f$ in $z$}
is the real number: 
\[
\mathrm{ord}_{z}(f):=\min\mathrm{Supp}_{z}(f).
\]
Notice that $f\in\mathcal{L}_{k}$ as in \eqref{Transseries} can
also be written \emph{blockwise}: 
\begin{equation}
f=\sum_{\alpha\in\mathrm{Supp}_{z}(f)}z^{\alpha}R_{\alpha},\label{eq:blockwise}
\end{equation}
where $R_{\alpha}\in\mathcal{L}_{k}$ is given by: 
\[
R_{\alpha}:=\sum_{(\alpha,n_{1},\ldots,n_{k})\in\mathrm{Supp}(f)}a_{\alpha,n_{1},\ldots,n_{k}}\boldsymbol{\ell}_{1}^{n_{1}}\cdots\boldsymbol{\ell}_{k}^{n_{k}}.
\]
For $\alpha\in\mathrm{Supp}_{z}(f)$, we call $z^{\alpha}R_{\alpha}$
the \emph{$\alpha$-block of $f$}. For $\alpha_{0}:=\mathrm{ord}_{z}(f)$,
the block $z^{\alpha_{0}}R_{\alpha_{0}}$ is called \emph{the leading
block in $z$} of $f$, and is denoted by $\mathrm{Lb}_{z}(f)$. In
the sequel, we use the acronym $\mathrm{h.o.b.}(z)$ for \textit{higher-order
blocks in $z$}.\\

Finally, for $\beta\ge0$ and $k\in\mathbb{N}$, we denote by $\mathcal{L}_{k}^{\geq\beta}\subseteq\mathcal{L}_{k}$
the set: 

\begin{equation}
\mathcal{L}_{k}^{\geq\beta}=\left\{ {f}\in\mathcal{L}_{k}:\mathrm{Supp}_{z}({f})\geq\beta\right\} .\label{eq:L greater equal than beta}
\end{equation}

\subsection{The main result}

In what follows, for $a\in\mathbb{R}$ and $p\in\mathbb{N}\setminus\left\{ 0\right\} $,
we will set $\boldsymbol{a}_{p}:=\left(a,\ldots,a\right)\in\mathbb{R}^{p}$.
\begin{namedthm}
{Main Theorem}Let $f=\lambda z+\mathrm{h.o.t.}\in\mathcal{L}_{k}$,
with $0<\lambda<1$, be a hyperbolic logarithmic transseries. Write
\[
f=\lambda z+\sum_{\boldsymbol{0}_{k}<\boldsymbol{m}\leq\boldsymbol{1}_{k}}a_{\boldsymbol{m}}z\boldsymbol{\ell}_{1}^{m_{1}}\cdots\boldsymbol{\ell}_{k}^{m_{k}}+\mathrm{h.o.t.},\quad\boldsymbol{m}=(m_{1},\ldots,m_{k}),
\]
and 
\begin{equation}
f_{0}=\lambda z+\sum_{\boldsymbol{0}_{k}<\boldsymbol{m}\leq\boldsymbol{1}_{k}}a_{\boldsymbol{m}}z\boldsymbol{\ell}_{1}^{m_{1}}\cdots\boldsymbol{\ell}_{k}^{m_{k}}.\label{eq:normal form}
\end{equation}
Then: 
\begin{enumerate}[1., font=\textup, nolistsep, leftmargin=0.6cm]
\item There exists a unique parabolic logarithmic transseries $\varphi=z+\mathrm{h.o.t.}\in\mathfrak{L}^{0}$,
called \emph{the normalizing transformation, }such that: 
\begin{equation}
\varphi\circ f\circ\varphi^{-1}=f_{0}.\label{eq:conjugacy equation theorem}
\end{equation}
Moreover, $\varphi\in\mathcal{L}_{k}^{0}.$
\item The normal form $f_{0}$ is \emph{minimal} with respect to inclusion
of supports, within the conjugacy class of $f$ by parabolic transformations
in $\mathfrak{L}^{0}$. In particular, $f$ can be linearized in $\mathfrak{L}^{0}$
if and only if $\mathrm{ord}(f-\lambda\cdot\mathrm{id})>\mathbf{1}_{k+1}$.
Moreover, the coefficients of $f_{0}$ are invariant under the conjugacy
class $\mathfrak{L}_{0}$. 
\item For a parabolic initial condition $h\in\mathfrak{L}^{0}$, the \emph{generalized
Koenigs sequence} 
\begin{eqnarray}
\Big(f_{0}^{\circ\left(-n\right)}\circ h\circ f^{\circ n}\Big)_{n}\label{eq:koenigs}
\end{eqnarray}
converges to $\varphi$ if and only if $\mathrm{Lb}_{z}(h)=\mathrm{Lb}_{z}(\varphi)$. 
\end{enumerate}
\end{namedthm}
Let us add a few remarks.
\begin{remarks}
\label{rem:post theorem}1. In part 3 of the statement of the Main
Theorem, the generalized Koenigs sequence converges in a certain
topology on $\mathfrak{L}$, which we will call the \emph{weak topology},
defined in Section \ref{subsec:two topologies}. It is interesting
to notice that this sequence may not converge in the more usual,
but finer, valuation topology on transseries (also introduced in Section
\ref{subsec:two topologies}). For example, consider $f=\lambda z+z^{2}$
with $0<\lambda<1$ and $h=\mathrm{id}$ (which are both classical power series).
In this case $f_{0}=\lambda z$, so the generalized Koenigs sequence
coincides with the standard Koenigs sequence and is given by $h_{n}=z+a_{n}z^{2}+\mathrm{h.o.t.}$,
where $a_{n}=\dfrac{1}{\lambda}+1+\lambda+\lambda^{2}+\cdots+\lambda^{n-2}$.
This coefficient never vanishes nor stabilizes, so the sequence $\left(h_{n}\right)$
does not converge in the valuation topology. However, $\left(a_{n}\right)$
tends to $\dfrac{1}{\lambda}+\dfrac{1}{1-\lambda}$ for the Euclidean
topology, in accordance with our main result.

2. Notice that, for a hyperbolic transseries $f\in\mathcal{L}_{k}$,
there exists a unique normalization $\varphi\in\mathfrak{L}^{0}$.
This means that not only there exists a unique normalization in $\mathcal{L}_{k}^{0}$,
but also that we cannot find another one in some $\mathcal{L}_{m}^{0}$,
with $m>k$. 

3. In some cases, as in the classical situation, the normalization
is a linearization. More precisely, according to the Main Theorem,
if $f=\lambda z+\mathrm{h.o.t.}\in\mathcal{L}_{k},\text{ with}\ 0<\lambda<1,$
is such that $\mathrm{ord}(f-\lambda\cdot\mathrm{id})>\mathbf{1}_{k+1}$,
then there exists a unique parabolic linearization $\varphi=z+\mathrm{h.o.t.}\in\mathfrak{L}^{0}$
such that
\[
\varphi\circ f\circ\varphi^{-1}=\lambda z.
\]
Moreover, $\varphi\in\mathcal{L}_{k}^{0}$. 

4. Whenever $\mathrm{ord}(f-\lambda\cdot\mathrm{id})>\mathbf{1}_{k+1}$,
for a parabolic initial condition $h\in\mathfrak{L}^{0}$, the generalized
Koenigs sequence coincides with the standard Koenigs sequence
\[
\Big(\frac{1}{\lambda^{n}}h\circ f^{\circ n}\Big)_{n}.
\]
It converges in the \textit{\emph{weak topology}} to $\varphi$, as
$n\to\infty$, if and only if $\mathrm{Lb}_{z}(h)=\mathrm{Lb}_{z}(\varphi)$.
So we obtain for hyperbolic logarithmic transseries the usual results:
if $f$ is linearizable, then its Koenigs sequence converges, in an
appropriate topology, to the linearizing map $\varphi$ (if its initial
condition is sufficiently close to $\varphi$).

5. The support of the normalization $\varphi$ depends only on the
support of the original transseries $f$, and not on the support of
the initial condition $h\in\mathfrak{L}^{0}$. More precisely, we
show in Section \ref{sec:support normalization} that the support
of $\varphi$ is contained in a set which is defined inductively and
depends solely on $\mathrm{Supp}\left(f\right)$.

6. The proof, given in Section~\ref{sec:proof theorem}, of the first
part of the Main Theorem is constructive:
it gives an algorithm for the construction of the normalization $\varphi$,
see Remark~\ref{rem:constr}. The leading block $\mathrm{Lb}_{z}(\varphi-\mathrm{id})$
is obtained as the limit of the Picard sequence of a suitable contraction
operator. Once we know $\mathrm{Lb}_{z}(\varphi-\mathrm{id})$, then $\mathrm{Lb}_{z}(\varphi )=z+\mathrm{Lb}_{z}(\varphi-\mathrm{id})$. Therefore, the\emph{ }(generalized) Koenigs
sequence applied to any initial condition $h\in\mathfrak{L}^{0}$
with the same leading block $\mathrm{Lb}(h)=\mathrm{Lb}(\varphi)$
converges to the normalization $\varphi$. 

7. Depending on the leading block of $f-\lambda z$, the support of
its normal form $f_{0}$ may not be finite, as it is illustrated by
the last of the following examples.
\end{remarks}

\begin{example}
Let $0<\lambda<1.$
\begin{enumerate}[1., font=\textup, nolistsep, leftmargin=0.6cm]
\item $f(z)=\lambda z+z\boldsymbol{\ell}_{1}+\mathrm{h.o.t.}\in\mathcal{L}_{1}$,
$f_{0}(z)=\lambda z+z\boldsymbol{\ell}_{1}$, $\varphi\in\mathcal{L}_{1}^{0}$, 
\item $f(z)=\lambda z+z\boldsymbol{\ell}_{1}^{2}+\mathrm{h.o.t.}\in\mathcal{L}_{1}$,
$f_{0}(z)=\lambda z$, $\varphi\in\mathcal{L}_{1}^{0}$, 
\item $f(z)=\lambda z+z(\boldsymbol{\ell}_{2}^{2}+\boldsymbol{\ell}_{1}\boldsymbol{\ell}_{2}^{-1}+\boldsymbol{\ell}_{1}\boldsymbol{\ell}_{3}+\boldsymbol{\ell}_{1}\boldsymbol{\ell}_{2}^{2}\boldsymbol{\ell}_{3}+\boldsymbol{\ell}_{1}^{2})+z^{2}\boldsymbol{\ell}_{1}^{-3}+z^{3}\in\mathcal{L}_{3}$,\\
 $f_{0}(z)=\lambda z+z(\boldsymbol{\ell}_{2}^{2}+\boldsymbol{\ell}_{1}\boldsymbol{\ell}_{2}^{-1}+\boldsymbol{\ell}_{1}\boldsymbol{\ell}_{3})$,
$\varphi\in\mathcal{L}_{3}^{0}$, 
\item $f(z)=\lambda z+z(\sum_{k\in\mathbb{N}}\boldsymbol{\ell}_{2}^{k}+\boldsymbol{\ell}_{1}\boldsymbol{\ell}_{2}+\boldsymbol{\ell}_{1}^{2})+\sum_{k\geq2}z^{k}\in\mathcal{L}_{2},$\\
 $f_{0}(z)=\lambda z+z(\sum_{k\in\mathbb{N}}\boldsymbol{\ell}_{2}^{k}+\boldsymbol{\ell}_{1}\boldsymbol{\ell}_{2})$,
$\varphi\in\mathcal{L}_{2}^{0}$. 
\end{enumerate}
\end{example}

\subsection{\label{subsec:two topologies}Two topologies on $\mathfrak{L}$.}

As the Main Theorem and several proofs refer to the notions of limit,
contraction and fixed point in a complete metric space, we introduce
the two main topologies which serve our purpose. 

\subsubsection{\label{subsec:valuation topology}The valuation topology}

The ordered valued field $\mathbb{R}\left(\left(t\right)\right)^{\mathrm{LE}}$
of transseries is classically endowed with the interval topology,
which coincides with the valuation topology (see \cite[p. 65 and 72]{vdd_macintyre_marker:log-exp_series}).
It turns out that the induced topology on the class $\mathcal{L}_{k}$,
for $k\in\mathbb{N}$, coincides with the metric topology defined
by the following distance $d_{z}:\mathcal{L}_{k}\times\mathcal{L}_{k}\to\mathbb{R}_{\geq0}$
: 
\begin{equation}
d_{z}(f,g)=\begin{cases}
2^{-\mathrm{ord}_{z}({f}-{g})}, & f\neq g,\\
0, & f=g,\ \quad f,\ g\in\mathcal{L}_{k}.
\end{cases}\label{eq:distance dz}
\end{equation}
Note that, for
every $k\in\mathbb{N}$, $(\mathcal{L}_{k},d_{z})$ is a complete
metric space by Proposition~\ref{prop:complete spaces}.

The valuation topology plays an important role in the proof of the
first part of the Main Theorem. Notice furthermore that, given a sequence
$\left(\varphi_{n}\right)_{n\in\mathbb{N}}$ of elements of $\mathcal{L}_{k}$,
the series $\sum_{n=0}^{\infty}\varphi_{n}$ converges in this topology
if and only if the sequence $\left(\mathrm{ord}_{z}\varphi_{n}\right)_{n\in\mathbb{N}}$
tends to $+\infty$. This remark is used in Section \ref{subsec:composition--transseries}
for the composition of transseries.

\subsubsection{\label{subsec:product topology}The weak topology}

This topology is used in the final part of the Main Theorem. Indeed,
we already noticed in Remark \ref{rem:post theorem} that the generalized
Koenigs sequence defined by \eqref{eq:koenigs} does not converge
in general for the valuation topology. Hence the need for a coarser
topology on our spaces of logarithmic transseries. To this end, we
consider $\mathcal{L}_{k}$ as a subset of the Cartesian product $\mathbb{R}^{\mathbb{R}_{\ge0}\times\mathbb{Z}^{k}}$
endowed with the product topology, where
each factor is itself equipped with the Euclidean topology. A sequence
$\left(\varphi_{n}\right)_{n\in\mathbb{N}}$ in $\mathcal{L}_{k}$
converges to $\varphi\in\mathcal{L}_{k}$ with respect to the weak topology if and only if
\[
\forall\left(\alpha,n_{1},\ldots,n_{k}\right)\in\mathbb{R}_{\ge0}\times\mathbb{Z}^{k},\lim_{n\rightarrow+\infty}\left[\varphi_{n}\right]_{\alpha,n_{1},\ldots,n_{k}}=\left[\varphi\right]_{\alpha,n_{1},\ldots,n_{k}}
\]
for the Euclidean topology.

This topology is related to the notion of summable families introduced
in \cite[p. 66]{vdd_macintyre_marker:log-exp_series}, in the following
way. If $G$ denotes the ordered group of logarithmic monomials, then
a family $\left(a_{i}\right)_{i\in I}$ of elements of the Hahn field
$\mathbb{R}\left(\left(G\right)\right)$ is called \emph{summable}
if the union $\bigcup_{i\in I}\mathrm{Supp}\left(a_{i}\right)$
is a well-ordered subset of $G$ and if, for each $g\in G$, there are only finitely many
$i\in I$ with $g\in\mathrm{Supp}\left(a_{i}\right)$. One can easily
check that, for $k\in\mathbb{N}$, if $\left(\varphi_{n}\right)$
is a sequence of elements of $\mathcal{L}_{k}$ such that the family
$\left(\varphi_{n}\right)_{n\in\mathbb{N}}$ is a summable family,
then the series $\sum_{n=0}^{\infty}\varphi_{n}$ converges in the
weak topology to the sum of the family. This last remark is also used
in Section \ref{subsec:composition--transseries}.\\

In Section \ref{subsec:gallery complete spaces}, we introduce another
metric in order to study the convergence of sequences within a given
block. 

\section{Preliminaries to the proof of the Main Theorem}

\label{sec:proof of the theorem}

The main idea of the proof of the Main Theorem consists in transforming
the \emph{conjugacy equation} \eqref{eq:conjugacy equation theorem}
into an equivalent \emph{fixed point equation}. Then, a suitable \emph{fixed
point theorem} gives us a unique solution. Thus we obtain the normalizing
change of variables as the limit of a Picard sequence in the valuation topology. Finally we
prove that, for well chosen initial conditions, the (generalized)
Koenigs sequence also converges to the normalizing change of variables, but in the weak topology.\\

\subsection{A fixed point theorem}

We will make a repeated use of the next proposition throughout the
proof of the Main Theorem.\\

Let us first recall a classical definition.
\begin{defn}[Homothety]
Let $\lambda,\mu>0$ and let $(X,d)$, $(Y,\rho)$ be two metric
spaces. 

A map $\mathcal{T}:X\to Y$ such that 
\[
\rho\left(\mathcal{T}(x),\mathcal{T}(y)\right)=\lambda d(x,y),\quad\forall x,y\in X,
\]
is called a \emph{$\lambda$-homothety}.

A map $\mathcal{S}:X\longrightarrow Y$ such that
\[
\rho\left(\mathcal{S}\left(x\right),\mathcal{S}\left(y\right)\right)\leq\mu d\left(x,y\right),\quad\forall x,y\in X,
\]
is called a \emph{$\mu$-Lipschitz map}.
\end{defn}

\noindent Notice that every $\lambda$-homothety $\mathcal{T}:X\to Y$
is injective on $X$. If, additionally, $\mathcal{T}$ is bijective,
then $\mathcal{T}^{-1}:Y\to X$ is a $\frac{1}{\lambda}$-homothety.\\

The following proposition, which is an easy consequence of the Banach
fixed point theorem, is inspired by a version of the fixed point theorem
due to Krasnoselskii (see e.g. \cite{xiang_goergiev:noncompact_krasnoselskii_fixed_point}).
\begin{prop}[A fixed point theorem]
\label{prop:krasno fixed point} Let $X$ be a complete metric space.
Let $\mathcal{S},\,\mathcal{T}:X\to X$, such that: 
\begin{enumerate}[1., font=\textup, nolistsep, leftmargin=0.6cm]
\item $\mathcal{S}$ is a $\mu$-Lipschitz map, 
\item $\mathcal{T}$ is a $\lambda$-homothety, 
\item $\mu<\lambda$, 
\item $\mathcal{S}(X)\subseteq\mathcal{T}(X)$. 
\end{enumerate}
Then there exists a unique point $x\in X$ such that $\mathcal{T}(x)=\mathcal{S}(x)$. 
\end{prop}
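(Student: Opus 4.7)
The plan is to reduce the statement to the classical Banach fixed point theorem applied to the composition $\mathcal{T}^{-1}\circ\mathcal{S}$. The key observation is that each of the four hypotheses is exactly what is needed to make this composition a well-defined self-contraction on $X$.

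First I would verify that $\mathcal{T}^{-1}\circ\mathcal{S}:X\to X$ makes sense. Since $\mathcal{T}$ is a $\lambda$-homothety with $\lambda>0$, it is injective, hence it is a bijection onto $\mathcal{T}(X)$, and the inverse $\mathcal{T}^{-1}:\mathcal{T}(X)\to X$ is a $\tfrac{1}{\lambda}$-homothety (as recalled just before the proposition). By hypothesis (4), $\mathcal{S}(X)\subseteq\mathcal{T}(X)$, so $\mathcal{T}^{-1}\circ\mathcal{S}$ is indeed a well-defined map from $X$ to $X$.

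Next I would estimate the Lipschitz constant. For all $x,y\in X$, combining the homothety property of $\mathcal{T}^{-1}$ with the $\mu$-Lipschitz property of $\mathcal{S}$ gives
\[
d\bigl(\mathcal{T}^{-1}\mathcal{S}(x),\mathcal{T}^{-1}\mathcal{S}(y)\bigr)=\tfrac{1}{\lambda}\,d\bigl(\mathcal{S}(x),\mathcal{S}(y)\bigr)\leq\tfrac{\mu}{\lambda}\,d(x,y).
\]
By hypothesis (3), $\tfrac{\mu}{\lambda}<1$, so $\mathcal{T}^{-1}\circ\mathcal{S}$ is a strict contraction on the complete metric space $X$. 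Applying the Banach fixed point theorem, there exists a unique $x\in X$ with $\mathcal{T}^{-1}(\mathcal{S}(x))=x$, which is equivalent to $\mathcal{T}(x)=\mathcal{S}(x)$.

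For uniqueness of the coincidence point, I would argue that if $y\in X$ also satisfies $\mathcal{T}(y)=\mathcal{S}(y)$, then $\mathcal{S}(y)\in\mathcal{T}(X)$ (already granted by hypothesis (4)), and applying $\mathcal{T}^{-1}$ gives $\mathcal{T}^{-1}(\mathcal{S}(y))=y$, so $y$ is a fixed point of $\mathcal{T}^{-1}\circ\mathcal{S}$, forcing $y=x$. There is no real obstacle here: the whole content of the statement is the clever packaging of Banach's theorem so that it applies to a conjugacy-type equation $\mathcal{T}(x)=\mathcal{S}(x)$ rather than to a standard fixed point equation; this is precisely the flexibility that will later be exploited in the construction of the normalizing transseries as a Picard limit.
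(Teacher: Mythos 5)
Your proof is correct and follows exactly the same route as the paper: both reduce the coincidence equation $\mathcal{T}(x)=\mathcal{S}(x)$ to a Banach fixed point problem for $\mathcal{T}^{-1}\circ\mathcal{S}$, using hypothesis (4) for well-definedness, the homothety/Lipschitz properties for the contraction constant $\mu/\lambda<1$, and completeness to conclude. The extra paragraph on uniqueness is redundant (it is already contained in the equivalence $\mathcal{T}(x)=\mathcal{S}(x)\Leftrightarrow\mathcal{T}^{-1}\mathcal{S}(x)=x$ together with Banach uniqueness), but it is harmless.
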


\begin{proof}
Since $\mathcal{S}(X)\subseteq\mathcal{T}(X)$, $\mathcal{T}^{-1}\circ\mathcal{S}:X\to X$
is well defined. The map $\mathcal{T}$ is a $\lambda$-homothety,
so its inverse $\mathcal{T}^{-1}$ is a $\frac{1}{\lambda}$-homothety
on $\mathcal{S}(X)$. Therefore, since $\frac{\mu}{\lambda}<1$, $\mathcal{T}^{-1}\circ\mathcal{S}:X\to X$
is a $\frac{\mu}{\lambda}$-contraction on $X$. We conclude by Banach's
fixed point theorem.
\end{proof}

\subsection{\label{subsec:composition--transseries}Composition of transseries}

In this work we often compose transseries. It turns out that a general
result on composition of transseries, the proof of which is rather
entangled, is given in \cite[Section 6]{vdd_macintyre_marker:log-exp_series}.
Fortunately, as we are only concerned with logarithmic transseries,
we need a much simpler version of this operation, which is mostly
based on Taylor's formula in our setting. However, in view of our
results, we need to interpret the convergence of the infinite sum
given by Taylor's formula in terms of the valuation and weak topologies
introduced in Section \ref{subsec:two topologies}. Since the convergence
of the Taylor series in either of the two topologies does not follow
directly from \cite{vdd_macintyre_marker:log-exp_series}, we prove
the following result.
\begin{prop}[Composition in the class $\mathcal{L}_{k}$, for $k\in\mathbb{N}$]
\emph{}\label{prop:formal Taylor} Let $f\in \mathcal{L}_{k}$ and $g\in\mathcal{L}_{k}^{H}$.
Let $g=\lambda z^{\alpha}+z^{\alpha}Q+g_{1}=g_{0}+g_{1}$, for $\alpha>0$,
where $g_{0}:=\lambda z^{\alpha}+z^{\alpha}Q$ is an initial part
of the blockwise representation \eqref{eq:blockwise} of $f$, and
the order $\mathrm{\mathrm{ord}}\left(g_{1}\right)$ is strictly bigger
than the orders of all the elements of $\mathrm{Supp}\left(g_{0}\right)$.
Then the composition ${f}\circ{g}\in\mathcal{L}_{k}$ is defined
as: 
\begin{eqnarray}
f\circ g:=f\left(g_{0}\right)+\sum_{i\geq1}\frac{f^{(i)}\left(g_{0}\right)}{i!}\left(g_{1}\right)^{i},\label{eq:comp taylor}
\end{eqnarray}
where the transseries on the right-hand side converges in the weak
topology on $\mathcal{L}_{k}$. If moreover $\mathrm{ord}_{z}\left(g_{1}\right)>\alpha,$
then the convergence holds also in the finer valuation topology on
$\mathcal{L}_{k}$. 
\end{prop}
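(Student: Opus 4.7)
The plan is to establish three things in order: first, that each summand $T_i:=\frac{f^{(i)}(g_0)}{i!}\,g_1^{\,i}$ is a well-defined element of $\mathcal{L}_k$; second, that the family $(T_i)_{i\ge 0}$ is summable in the sense of \cite[p.~66]{vdd_macintyre_marker:log-exp_series}, so that the Taylor sum converges in the weak topology; and third, that under the additional hypothesis $\mathrm{ord}_z(g_1)>\alpha$ the quantities $\mathrm{ord}_z(T_i)$ tend to $+\infty$, yielding Cauchyness of the partial sums for $d_z$.

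For well-definedness of $f^{(i)}(g_0)$, I would substitute termwise. The key point is that $g_0=z^\alpha(\lambda+Q)$ has a single $z$-block, with $\lambda\neq 0$ and $Q$ a purely logarithmic transseries of strictly positive order in $\mathbb{R}\times\mathbb{Z}^k$. Consequently, $g_0^\gamma$ expands as the binomial series $z^{\alpha\gamma}\lambda^\gamma\sum_{n\ge 0}\binom{\gamma}{n}(Q/\lambda)^n$, which is summable because $\mathrm{ord}(Q^n)=n\,\mathrm{ord}(Q)$ diverges in the lex order on $\mathbb{Z}^k$; and each $\boldsymbol{\ell}_j(g_0)$ is obtained by inverting $\ln g_0=-\alpha/\boldsymbol{\ell}_1+\ln\lambda+\ln(1+Q/\lambda)$ via a geometric series in $\boldsymbol{\ell}_1$, yielding an expansion with leading monomial $(1/\alpha)\boldsymbol{\ell}_j$. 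Substituting these into each monomial of the well-ordered $\mathrm{Supp}(f^{(i)})$ and invoking standard summability arguments puts $f^{(i)}(g_0)$ in $\mathcal{L}_k$; the subsequent multiplication by $g_1^{\,i}\in\mathcal{L}_k$ remains in $\mathcal{L}_k$.

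The heart of the proof is (ii), which I would reduce to the two standard conditions for summability of $(T_i)_i$: well-orderedness of the union $\bigcup_i\mathrm{Supp}(T_i)$, and finiteness of $\{i:\boldsymbol{\sigma}\in\mathrm{Supp}(T_i)\}$ for each $\boldsymbol{\sigma}\in\mathbb{R}\times\mathbb{Z}^k$. Both follow once $\mathrm{ord}(T_i)\to+\infty$ in the lex order. In the generic case where no cancellation occurs at the leading derivative, one computes $\mathrm{ord}(f^{(i)}(g_0))=(\alpha(\beta_0-i),p_1,\ldots,p_k)$ with $(\beta_0,p_1,\ldots,p_k)=\mathrm{ord}(f)$; writing $\mathrm{ord}(g_1)=(\beta_1,M_1,\ldots,M_k)$ then gives
\[
\mathrm{ord}(T_i)=\bigl(\alpha\beta_0+i(\beta_1-\alpha),\,p_1+iM_1,\ldots,p_k+iM_k\bigr).
\]
The hypothesis $\mathrm{ord}(g_1)>\mathrm{Supp}(g_0)\ni(\alpha,\mathbf{0}_k)$ forces $(\beta_1,M_1,\ldots,M_k)>(\alpha,\mathbf{0}_k)$ in lex, so either $\beta_1>\alpha$ (first coordinate diverges) or $\beta_1=\alpha$ with $(M_1,\ldots,M_k)>\mathbf{0}_k$ in lex (then a later coordinate diverges); in both situations $\mathrm{ord}(T_i)\to+\infty$ in lex. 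Well-orderedness of $\bigcup_i\mathrm{Supp}(T_i)$ then follows by a pigeonhole argument: any infinite descending chain must lie in a finite sub-union, contradicting the individual well-orderedness.

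Assertion (iii) is immediate from the bound $\mathrm{ord}_z(T_i)\ge\alpha\beta_0+i(\beta_1-\alpha)\to+\infty$ once $\beta_1>\alpha$, giving $d_z(T_i,0)\to 0$ and Cauchyness of the partial sums in $(\mathcal{L}_k,d_z)$. The step I expect to be the main obstacle is handling the non-generic situations where the $\preceq$-leading monomial of $f^{(i)}$ is annihilated (for instance when it becomes a constant in $z$), so that the leading order of $f^{(i)}(g_0)$ must be extracted from deeper monomials of $f$. Nevertheless, since each derivative with respect to $z$ lowers the $z$-exponent of every monomial by exactly $1$ (both for pure powers and because $\boldsymbol{\ell}_j'$ carries a factor $z^{-1}$), the bound $\mathrm{ord}_z(f^{(i)})\ge\mathrm{ord}_z(f)-i$ holds in all cases, which suffices for the $z$-coordinate estimate; the full lex-estimate is recovered by an induction on the blocks of $f$ combined with the same summability machinery.
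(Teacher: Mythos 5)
Your part (iii) (valuation topology) matches the paper's computation, and your sketch of well-definedness of each summand is reasonable. The genuine gap is in your argument for summability, i.e. for convergence in the weak topology.

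You claim that the hypothesis forces $\mathrm{ord}(T_i)\to+\infty$ in the lexicographic order on $\mathbb{R}_{\ge0}\times\mathbb{Z}^k$, and you deduce both summability conditions (well-orderedness of $\bigcup_i\mathrm{Supp}(T_i)$ and finiteness of $\{i:\boldsymbol{\sigma}\in\mathrm{Supp}(T_i)\}$) from this. But in the case $\mathrm{ord}_z(g_1)=\alpha$, the claim is false: the first coordinate of $\mathrm{ord}(T_i)$ is then fixed at $\alpha\cdot\mathrm{ord}_z(f)$, so for any $\boldsymbol{\sigma}$ whose first coordinate is strictly larger than $\alpha\cdot\mathrm{ord}_z(f)$ one has $\mathrm{ord}(T_i)<\boldsymbol{\sigma}$ for \emph{all} $i$. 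The phrase ``then a later coordinate diverges'' does not give lex-divergence: in a lexicographic order, a sequence with bounded first coordinate cannot tend to $+\infty$. As a result your pigeonhole argument for well-orderedness fails in exactly this situation --- an infinite descending chain sitting in a $z$-block with exponent strictly above $\alpha\cdot\mathrm{ord}_z(f)$ need not be confined to finitely many $T_i$. Summability is a genuinely weaker (and the correct) condition than lex-divergence of orders; notice that the paper does not try to prove it from scratch but cites \cite{vdd_macintyre_marker:log-exp_series} and Neumann's Lemma \cite{neumann:ordered_division_rings}, which is the standard mechanism: since $\mathrm{Supp}(g_1)$ lies entirely above $\mathrm{Supp}(g_0)$, Neumann's Lemma controls the supports of $g_1^i$ and of $f^{(i)}(g_0)$ simultaneously, and gives summability of the family $(T_i)$ without any pointwise estimate on $\mathrm{ord}(T_i)$. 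Your closing remark (``the full lex-estimate is recovered by an induction on the blocks of $f$ combined with the same summability machinery'') is circular: summability is precisely what needs to be established, and the lex-estimate you want to recover is false in the critical case. For the valuation topology part you should also write $\mathrm{ord}_z(T_i)\ge\mu\alpha+ri$ rather than equality, exactly as you do, since $f^{(i)}$ may vanish identically on low $z$-blocks; the paper writes an equality here, but the inequality is what is actually used and is what survives all degenerate cases.
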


\noindent Note that the right-hand side of \eqref{eq:comp taylor}
is just the formal Taylor expansion of ${f}(g_{0}+{g}_{1})$ at $g_{0}$.
This formula is particularly useful when $g_{0}=\lambda z^{\alpha}$.
\begin{proof}
It follows from the properties of the composition of transseries proved
in \cite{vdd_macintyre_marker:log-exp_series}, as a consequence of
Neumann's Lemma \cite{neumann:ordered_division_rings}, that the family 
\[
\left(\frac{f^{\left(i\right)}\left(g_{0}\right)}{i!}\left(g_{1}\right)^{i}\right)_{i\in\mathbb{N}}
\]
is summable in the sense recalled in Section \ref{subsec:product topology}.
Hence, as it was already noticed in the same section, the infinite
sum in \eqref{eq:comp taylor} converges in the weak topology.

For the second part of the statement, set $\mu:=\mathrm{ord}_{z}\left(f\right)$
and $r:=\mathrm{ord}_{z}\left(g_{1}\right)-\alpha$, $r>0$. It follows
from \eqref{eq:i-th derivative block} that 
\[
\mathrm{ord}_{z}\left(\frac{f^{\left(i\right)}\left(g_{0}\right)}{i!}\left(g_{1}\right)^{i}\right)=\left(\mu-i\right)\alpha+\left(\alpha+r\right)i=\mu\alpha+ri
\]
tends to $+\infty$ when $i\rightarrow+\infty$. We conclude by the
remark at the end of Subsection \ref{subsec:valuation topology}. 
\end{proof}

\subsection{The main strategy: the conjugacy equation via a fixed point equation}

Inspired by classical methods (\emph{e.g.} for $\mathcal{C}^{r}$
germs \cite[Ch. 3]{navas:groups_circle_diffeomorphisms}), our goal
in this section is to rewrite the \emph{conjugacy equation \eqref{eq:conjugacy equation theorem}}
as a \emph{fixed point equation}. \\

Consider a hyperbolic transseries: 
\begin{align}
f  & =\lambda z+\sum_{\boldsymbol{0}_{k}<\boldsymbol{m}\leq\boldsymbol{1}_{k}}a_{\boldsymbol{m}}z\boldsymbol{\ell}_{1}^{m_{1}}\cdots\boldsymbol{\ell}_{k}^{m_{k}}+\mathrm{h.o.t.}\nonumber\\
 & =f_{0}+g,\quad 0<\lambda<1,\ \boldsymbol{m}=(m_{1},\ldots,m_{k}),\label{eq:decomp f0 plus g}
\end{align}
where $f_{0}$ is the formal normal form given in the statement of
the Main Theorem, and $\mathrm{ord}(g)>\mathbf{1}_{k+1}$. Let
\[
g_{0}:=f_0-\lambda\cdot\mathrm{id}.
\]

\noindent Classically, the solution to the conjugacy equation \eqref{eq:conjugacy equation theorem}
is seen as a fixed point of the operator $\mathcal{P}_{f}:\mathfrak{L}^{0}\to\mathfrak{L}^{0}$:
\begin{equation*}
\mathcal{P}_{f}\left(\varphi\right):=f_{0}^{-1}\circ\varphi\circ f,\ \varphi\in\mathfrak{L}^{0},
\end{equation*}
where $f_{0}^{-1}$ denotes the compositional inverse of $f_{0}$.
We introduce the space 
\[
\mathfrak{L}_{>\mathrm{id}}:=\{h\in\mathcal{L}_{k}:\ \mathrm{ord}(h)>(1,0,\ldots,0)_{k+1},\ k\in\mathbb{N}\}\subseteq\mathfrak{L}.
\]
and write the solution as $\varphi=\mathrm{id}+h$, where $h\in\mathfrak{L}_{>\mathrm{id}}$.
Then $h$ is a fixed point of the operator $\mathcal{H}_{f}:\mathfrak{L}_{>\mathrm{id}}\to\mathfrak{L}_{>\mathrm{id}}$
defined by: 
\begin{align}
\mathcal{H}_{f}(h) & :=\mathcal{P}_{f}\big(\mathrm{id}+h\big)-\mathrm{id},\ h\in\mathfrak{L}_{>\mathrm{id}}. \nonumber 
\end{align}
Unfortunately, a simple computation shows that neither of the operators
$\mathcal{P}_{f}$ and $\mathcal{H}_{f}$ is a contraction for any
reasonable distance on $\mathfrak{L}^{0}$ or $\mathfrak{L}_{>\mathrm{id}}$.
Hence, in the next proposition, we reformulate the conjugacy equation
\eqref{eq:conjugacy equation theorem} as a fixed point problem in
the form of Proposition \ref{prop:krasno fixed point}. We omit the
proof, which is straightforward. 
\begin{prop}
\emph{\label{prop:fixed point equivalent conjugacy}} Let $k\in\mathbb{N}$
and $f(z)=\lambda z+\mathrm{h.o.t}\in\mathcal{L}_{k}$, with $0<\lambda<1$.
Let $f_{0}$ and $g$ be as in \eqref{eq:decomp f0 plus g}. For $\varphi\in\mathfrak{L}^{0}$
and $h:=\varphi-\mathrm{id}\in\mathfrak{L}_{>\mathrm{id}}$, the following
equations are equivalent: 
\begin{enumerate}[1., font=\textup, nolistsep, leftmargin=0.6cm]
\item $\varphi\circ f\circ\varphi^{-1}=f_{0}$, 
\item $\mathcal{T}_{f}(h)=\mathcal{S}_{f}(h),$ 
\end{enumerate}
\noindent where the operators $\mathcal{S}_{f},\mathcal{T}_{f}\colon\mathfrak{L}_{>\mathrm{id}}\to\mathfrak{L}_{>\mathrm{id}}$
are given by: 
\begin{align}
\mathcal{S}_{f}(h) & :=\frac{1}{\lambda}\bigg(g+\left(h\circ f-h\circ f_{0}\right)-\left(g_{0}\left(\mathrm{id}+h\right)-g_{0}-g_{0}'\cdot h\right)\bigg),\nonumber \\
\mathcal{T}_{f}(h) & :=\dfrac{1}{\lambda}\bigg(\left(\lambda\cdot h-h\left(\lambda\cdot\mathrm{id}\right)\right)-\left(h\circ f_{0}-h\left(\lambda\cdot\mathrm{id}\right)\right)+g_{0}'\cdot h\bigg),\label{eq:operators S and T}
\end{align}
where $g_{0}:=f_{0}-\lambda\cdot\mathrm{id}$. 
\end{prop}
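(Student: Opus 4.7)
The plan is to verify the equivalence by direct algebraic manipulation, using the Taylor expansion of Proposition \ref{prop:formal Taylor} to turn the compositions appearing in the conjugacy equation into sums of transseries that can be grouped into the operators $\mathcal{S}_{f}$ and $\mathcal{T}_{f}$. Concretely, I would start from equation (1), rewrite it as $\varphi \circ f = f_{0} \circ \varphi$, and then substitute $\varphi = \mathrm{id} + h$ on both sides. The left-hand side becomes $f + h \circ f$, and on the right, since $f_{0} = \lambda \cdot \mathrm{id} + g_{0}$, I get $\lambda \cdot \mathrm{id} + \lambda h + g_{0}(\mathrm{id} + h)$.

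The key step is to invoke Proposition \ref{prop:formal Taylor} to write
\[
g_{0}(\mathrm{id} + h) = g_{0} + g_{0}' \cdot h + \bigl(g_{0}(\mathrm{id}+h) - g_{0} - g_{0}'\cdot h\bigr),
\]
where the bracketed remainder collects the quadratic-and-higher terms of the Taylor series. This is legitimate because $h \in \mathfrak{L}_{>\mathrm{id}}$, so $h$ is infinitesimal compared to $\mathrm{id}$, and the Taylor expansion converges in the valuation topology. Substituting, using $f = f_{0} + g$ on the left, and cancelling the common term $f_{0}$ on both sides, the equation reduces to
\[
g + h \circ f = \lambda h + g_{0}' \cdot h + \bigl(g_{0}(\mathrm{id}+h) - g_{0} - g_{0}' \cdot h\bigr).
\]
Adding and subtracting $h \circ f_{0}$ on the left-hand side, and the auxiliary term $h(\lambda \cdot \mathrm{id})$ on the right, allows one to regroup exactly into $\lambda \mathcal{S}_{f}(h) = \lambda \mathcal{T}_{f}(h)$, yielding equation (2). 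Each implication is obtained by reading the manipulation in the opposite direction, and all operations are reversible, so the equivalence follows.

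The only point demanding mild care is to check that every composition and every derivative appearing in the computation lies in $\mathfrak{L}$ and that the Taylor series converges in the appropriate topology — both of which are guaranteed by Proposition \ref{prop:formal Taylor} applied with $g_{0}$ in the role of the ``infinitesimal perturbation'' $g_{1}$. There is no genuine obstacle: the computation is routine bookkeeping, which justifies the authors' decision to omit it. The final sanity check is to verify that $\mathcal{S}_{f}(h), \mathcal{T}_{f}(h) \in \mathfrak{L}_{>\mathrm{id}}$ whenever $h \in \mathfrak{L}_{>\mathrm{id}}$, which follows from $\mathrm{ord}(g) > \mathbf{1}_{k+1}$, $\mathrm{ord}(g_{0}) \geq \mathbf{1}_{k+1}$, and the fact that the operators $h \mapsto \lambda h - h(\lambda \cdot \mathrm{id})$ and $h \mapsto h \circ f_{0} - h(\lambda \cdot \mathrm{id})$ both raise order, so that the operators $\mathcal{S}_{f}$ and $\mathcal{T}_{f}$ map $\mathfrak{L}_{>\mathrm{id}}$ into itself.
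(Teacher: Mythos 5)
Your proof is correct and is exactly the direct algebraic computation that the paper leaves to the reader (the authors state that they ``omit the proof, which is straightforward''): substitute $\varphi = \mathrm{id}+h$ into $\varphi\circ f = f_0\circ\varphi$, use Taylor's formula to expand $g_0(\mathrm{id}+h)$, cancel $f_0$, and regroup by adding and subtracting $h\circ f_0$ and $h(\lambda\cdot\mathrm{id})$. One small inaccuracy worth noting: when $\mathrm{ord}_z(h)=1$ (which is allowed for $h\in\mathfrak{L}_{>\mathrm{id}}$), the Taylor series of $g_0(\mathrm{id}+h)$ converges only in the weak topology rather than the valuation topology, as Proposition~\ref{prop:formal Taylor} makes explicit; this does not affect the validity of the algebraic identity you establish.
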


The idea of the proof of the Main Theorem mainly consists in proving
that the operators $\mathcal{S}_{f}$ and $\mathcal{T}_{f}$ of Proposition
\ref{prop:fixed point equivalent conjugacy} are respectively a $\mu$-Lipschitz
map and a $\lambda$-homothety, where $\mu<\lambda$, on some complete
metric spaces, to which we can apply Proposition \ref{prop:krasno fixed point}.
As explained in Section \ref{sec:proof theorem}, we distinguish two
cases, depending on $\mathrm{ord}_{z}\left(g\right)$
in decomposition \eqref{eq:decomp f0 plus g}:
\begin{enumerate}[(a), font=\textup, nolistsep, leftmargin=0.6cm]
\item $\mathrm{ord}_{z}({g})>1$, 
\item $\mathrm{ord}_{z}({g})=1$. 
\end{enumerate}
The reason for this two-case approach is the following. Let 
\begin{align}
\mathcal{L}_{f} &:=\mathcal{S}_{f}-\frac{1}{\lambda}g \nonumber 
\end{align}
be the non-constant part of the operator $\mathcal{S}_{{f}}$ from
\eqref{eq:operators S and T}.\\

In case\emph{ }(a), using Taylor's formula (Proposition \ref{prop:formal Taylor}) and \eqref{eq:i-th derivative block}, we observe that, for ${h}\in\big\{{h}\in\mathcal{L}_{k}:\ \mathrm{ord}_{z}({h})\geq\mathrm{ord}_{z}({g})\big\}$,
$\mathcal{L}_{{f}}$ strictly increases the order of ${h}$ in $z$
(by the constant value $\mathrm{ord}_{z}(g)-1$, which depends only
on ${f}$): 
\begin{equation}
\mathrm{ord}_{z}(\mathcal{L}_{f}(h))\geq\mathrm{ord}_{z}(h)+\mathrm{ord}_{z}(g)-1.\label{eq:ell}
\end{equation}
It follows that the operator $\mathcal{S}_{{f}}$ in \eqref{eq:operators S and T}
is a contraction for the distance $d_{z}$ on the subspace $\big\{{h}\in\mathcal{L}_{k}:\ \mathrm{ord}_{z}({h})\geq\mathrm{ord}_{z}({g})\big\}$
of $\mathfrak{L}_{>\mathrm{id}}$. We can hence apply Proposition~\ref{prop:krasno fixed point}
to the equation $\mathcal{T}_{f}\left(h\right)=\mathcal{S}_{f}\left(h\right)$
in order to obtain a normalizing change of variables.\\

In case (b), for the same reasons, the order increases under the action
of $\mathcal{L}_{{f}}$ on the subspace $\big\{{h}\in\mathcal{L}_{k}:\ \mathrm{ord}({h})\geq\mathrm{ord}({g})\big\}\subseteq\mathfrak{L}_{>\mathrm{id}}$.
Indeed, for every ${h}\in\big\{{h}\in\mathcal{L}_{k}:\ \mathrm{ord}({h})\geq\mathrm{ord}({g})\big\}$,
analyzing \eqref{eq:operators S and T}, we obtain that 
\begin{equation}
\mathrm{ord}(\mathcal{L}_{{f}}({h}))\geq\mathrm{ord}({h})+\mathrm{ord}({g})-(1,\boldsymbol{0}_{k}).\label{eq:visi}
\end{equation}
Since $(1,\boldsymbol{0}_{k})<\mathrm{ord}({g})$, the order strictly
increases by the constant amount $\mathrm{ord}({g})-(1,\boldsymbol{0}_{k})>\mathbf{0}_{k+1}$.

Notice that $\mathrm{ord}_{z}({g})=1$ in case (b). Hence, for a general
${h}$, in \eqref{eq:ell} it may happen that 
\begin{align}
\mathrm{ord}_{z}(\mathcal{L}_{{f}}({h})) & =\mathrm{ord}_{z}({h}). \nonumber 
\end{align}
Thus the increase of order may not occur for $\mathrm{ord}_{z}$,
whereas it does occur for the lexicographic order $\mathrm{ord}$,
thanks to \eqref{eq:visi}.\\

For this reason, in case (b) we apply the following two-step algorithm
to reduce $f$ to $f_{0}$:
\begin{enumerate}[1., font=\textup, nolistsep, leftmargin=0.6cm]
\item \emph{Prenormalization}: this step consists in eliminating the leading
block of $f-f_{0}$ (in this case it is the block of order $1$ in $z$), thus leading to a prenormalized transseries.
\item \emph{Normalization}:\emph{ }we can now apply case (a) to the prenormalized
transseries obtained in the previous step.
\end{enumerate}

\subsection{Spaces of blocks}

\label{subsec:spaces blocks}\

\subsubsection{Spaces of blocks}

\label{subsec:spaces blocks_first}

Let $k\in\mathbb{N}_{\ge1}$. In view of case (b) of the proof of
the theorem, we introduce several families of spaces: $\mathcal{B}_{m}$,
$\mathcal{B}_{m}^{+}$ and $\mathcal{B}_{\ge m}^{+}$, for $1\leq m\leq k$.\\

a) \emph{The spaces $\mathcal{B}_{m}$ }(\emph{$1\leq m\leq k$}).
Let $\mathcal{B}_{m}\subseteq\mathcal{L}_{k}$ be the set of all transseries:
\begin{align}
R &=\sum_{\boldsymbol{n}\in\mathcal{S}}a_{\boldsymbol{n}}\boldsymbol{\ell}_{m}^{n_{m}}\cdots\boldsymbol{\ell}_{k}^{n_{k}}, \nonumber 
\end{align}
where $\boldsymbol{n}=(n_{m},\ldots,n_{k})\in\mathbb{Z}^{k-m+1}$
and $\mathcal{S}\subseteq\mathbb{Z}^{k-m+1}$ is well-ordered. The
order $\mathrm{ord}(R)$ is a tuple $(\mathbf{0}_{m},n_{m},\ldots,n_{k})\in\mathbb{R}_{\geq0}\times\mathbb{Z}^{k}$. 

We define \emph{the order in $\boldsymbol{\ell}_{m}$} \emph{of ${R}\in\mathcal{B}_{m}$
}as the minimal exponent of $\boldsymbol{\ell}_{m}$ in $R$,
and we denote it by 
\[
\mathrm{ord}_{\boldsymbol{\ell}_{m}}({R}).
\]
In the sequel, we will use the acronym $\mathrm{h.o.b.}(\boldsymbol{\ell}_{m})$,
for \textit{higher order blocks in} $\boldsymbol{\ell}_{m}$. The
sets $\mathcal{B}_{m}$ ($m\in\mathbb{N}_{\geq 1}$) are real vector spaces,
and $\mathcal{B}_{m+1}\subseteq\mathcal{B}_{m}$, for $1\leq m\leq k-1$.
\\

b) \emph{The spaces $\mathcal{B}_{m}^{+}$} (\emph{$1\leq m\leq k$})\emph{.}
Let 
\begin{align}
\mathcal{B}_{m}^{+} &:=\left\{ {R}\in\mathcal{B}_{m}:\mathrm{ord}_{\boldsymbol{\ell}_{m}}({R})>0)\right\} ,\ 1\leq m\leq k. \nonumber 
\end{align}

c) \emph{The spaces $\mathcal{B}_{\ge m}^{+}$ }(\emph{$1\leq m\leq k$})\emph{.}
Let $\mathcal{B}_{\geq m}^{+}\subseteq\mathcal{B}_{m}$ ($1\leq m\leq k$)
be the space of all blocks ${R}\in\mathcal{B}_{m}$ such that $\mathrm{ord}({R})>\mathbf{0}_{k+1}$.
Note that $\mathcal{B}_{\geq m}^{+}$ is also a real vector space,
which admits the following decomposition: 
\begin{align}
\mathcal{B}_{\geq m}^{+} &=\mathcal{B}_{m}^{+}\oplus\cdots\oplus\mathcal{B}_{k}^{+}. \nonumber 
\end{align}
Thus, every transseries ${R}\in\mathcal{B}_{\geq m}^{+}$ can be written
uniquely as 
\begin{align}
R &={R}_{m}+\cdots+{R}_{k}, \nonumber 
\end{align}
where ${R}_{i}\in\mathcal{B}_{i}^{+}$, $m\leq i\leq k$. \\

\subsubsection{\label{subsec:derivations Bm}Derivations on the spaces of blocks}

Note that $\frac{\mathrm{d}}{\mathrm{d}\boldsymbol{\ell}_{m}}$ (the
derivation with respect to the \emph{variable} $\boldsymbol{\ell}_{m}$,
applied term by term) turns $\mathcal{B}_{m}$ into a differential
algebra. In the following sections we will make an extensive use of
another derivation $D_{m}$ on $B_{m}$, defined as
\begin{equation}
D_{m}:=\boldsymbol{\ell}_{m}^{2}\frac{\mathrm{d}}{\mathrm{d}\boldsymbol{\ell}_{m}}.\label{eq:derivation Dm}
\end{equation}
This modified derivation will be better suited for our purposes, since
it increases the order in $\boldsymbol{\ell}_{m}$ by $1$ on $\mathcal{B}_{m}$.
Notice that the derivations $\frac{\mathrm{d}}{\mathrm{d}\boldsymbol{\ell}_{m}}$
and $D_{m}$ are superlinear, \emph{i.e.} they commute with infinite
sums. \\
Formula \eqref{eq:lien D1 Dm} explains the relation between derivations $D_{1}$ and $D_{m}$, for $2\leq m\leq k$.

\subsection{\label{subsec:gallery complete spaces}A gallery of complete spaces}

On the spaces of blocks $\mathcal{B}_{m}$, we define another distance
$d_{m}$ similar to the distance $d_{z}$ defined in \eqref{eq:distance dz}.
The function $d_{m}:\mathcal{B}_{m}\times\mathcal{B}_{m}\to\mathbb{R}_{\geq0}$
defined as 
\begin{eqnarray}
d_{m}\big({R}_{1},{R}_{2}\big)=\begin{cases}
2^{-\mathrm{ord}_{\boldsymbol{\ell}_{m}}({R}_{1}-{R}_{2})}, & {R}_{1}\neq{R}_{2}\\
0,\, & {R}_{1}={R}_{2}\quad\quad({R}_{1},\,{R}_{2}\in\mathcal{B}_{m})
\end{cases}
\end{eqnarray}
is a metric on the space $\mathcal{B}_{m}$. We call it the \emph{$m$-distance}. 
\begin{rem}
\label{rem:Dm contraction}Since the derivation $D_{m}$ defined in \eqref{eq:derivation Dm} increases the order in $\boldsymbol{\ell }_{m}$ by $1$ on the space $\mathcal{B}_{m}$, we
observe that, unlike $\frac{\mathrm{d}}{\mathrm{d}\boldsymbol{\ell}_{m}}$,
the derivation $D_{m}$ is a
$\frac{1}{2}$-contraction on $\left(\mathcal{B}_{m},d_{m}\right)$.
This property is used several times in the proof of the Main Theorem.
\end{rem}

In view of applying our version of the fixed point theorem (Proposition
\ref{prop:krasno fixed point}), we check in the next proposition
that all the metric spaces introduced so far are complete. 
\begin{prop}
\label{prop:complete spaces} The spaces $(\mathcal{L}_{k},d_{z})$,
$k\in\mathbb{N}$, the subspaces $\mathcal{L}_{k}^{\geq\beta}\subseteq\mathcal{L}_{k}$,
$\beta\geq0$, $k\in\mathbb{N}$, defined in \eqref{eq:L greater equal than beta},
and the spaces of blocks $(\mathcal{B}_{m},d_{m})$, $(\mathcal{B}_{m}^{+},d_{m})$,
$(\mathcal{B}_{\geq m}^{+},d_{m})$, $1\leq m\leq k$ (for $k\in \mathbb{N}_{\geq 1}$), defined in
Subsection~\ref{subsec:spaces blocks_first}, are complete. 
\begin{proof}
We prove that $\left(\mathcal{L}_{k},d_{z}\right)$ is a complete
space. The proof for the other spaces follows the same pattern. Suppose
that $\left({g}_{n}\right)$ is a Cauchy sequence in the space $\left(\mathcal{L}_{k},d_{z}\right)$.
Hence, for every $\alpha\in\mathbb{R}_{\geq0}$ there exists $n_{\alpha}\in\mathbb{N}$,
such that $p,q\ge n_{\alpha}$ implies $\mathrm{ord}_{z}\left(g_{p}-g_{q}\right)>\alpha$.
So, for every $n\geq n_{\alpha}$, every $0\leq\beta\leq\alpha$ and
every $\boldsymbol{m}\in\mathbb{Z}^{k}$: 
\begin{equation}
\left[g_{n}\right]_{\beta,\boldsymbol{m}}=\left[g_{n_{\alpha}}\right]_{\beta,\boldsymbol{m}}.\label{Eq20}
\end{equation}
We can then define an element $g\in\mathbb{R}^{\mathbb{R}_{\ge0}\times\mathbb{\mathbb{Z}}^{k}}$
by setting, for every $\left(\alpha,\boldsymbol{m}\right)\in\mathbb{R}_{\geq0}\times\mathbb{Z}^{k}$,
\begin{equation}
\left[g\right]_{\alpha,\boldsymbol{m}}=\left[g_{n_{\alpha}}\right]_{\alpha,\boldsymbol{m}}.\label{Eq22}
\end{equation}
It remains to prove that $g$ is indeed an element of $\mathcal{L}_{k}$,
and that $\left(g_{n}\right)\rightarrow g$ in $\left(\mathcal{L}_{k},d_{z}\right)$.
In order to prove that $g\in\mathcal{L}_{k}$, it is enough to prove
that $\mathrm{Supp}\left(g\right)$ is well-ordered. Let $A$ be a
nonempty subset of $\mathrm{Supp}\left({g}\right)$ and let $\left(\alpha,\boldsymbol{m}\right)\in A$.
It follows from the definition of ${g}$ that there exists $n_{\alpha}\in\mathbb{N}$
such that $\left[g\right]_{\alpha,\boldsymbol{m}}=\left[g_{n_{\alpha}}\right]_{\alpha,\boldsymbol{m}}$.
By \eqref{Eq20} we have 
\begin{equation}
\left[g\right]_{\beta,\boldsymbol{k}}=\left[g_{n_{\beta}}\right]_{\beta,\boldsymbol{k}}=\left[g_{n_{\alpha}}\right]_{\beta,\boldsymbol{k}},\label{Eq21}
\end{equation}
for every $0\leq\beta\leq\alpha$ and $\boldsymbol{k}\in\mathbb{Z}^{k}$.
From \eqref{Eq21} we deduce that 
\[
\left\lbrace \left(\beta,\mathbf n\right)\in A:\left(\beta,\mathbf n\right)\leq\left(\alpha,\mathbf m\right)\right\rbrace \subseteq\mathrm{Supp}\left({g}_{n_{\alpha}}\right).
\]
Since $\mathrm{Supp}\left(g_{n_{\alpha}}\right)$ is well-ordered,
the set $A$ admits a minimum element $\mathrm{min}A$. This implies
that $\mathrm{Supp}\left({g}\right)$ is a well-ordered subset of
$\mathbb{R}_{\geq0}\times\mathbb{Z}^{k}$. 

Finally, it follows easily from \eqref{Eq20} and \eqref{Eq22} that
$\left(g_{n}\right)\to g$ in $\left(\mathcal{L}_{k},d_{z}\right)$.
\end{proof}
\end{prop}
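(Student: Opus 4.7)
The plan is to mimic the standard argument that a valued field with well-ordered supports is Cauchy-complete, adapting it separately to the $z$-valuation setting for $\mathcal{L}_{k}$ and to the $\boldsymbol{\ell}_{m}$-valuation setting for the block spaces. The treatment of the subspaces will then follow from a closed-subspace argument.

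I would start by proving completeness of $(\mathcal{L}_{k},d_{z})$. Let $(g_{n})$ be a Cauchy sequence. Unwinding the definition of $d_{z}$, for every $\alpha\in\mathbb{R}_{\geq 0}$ there exists $n_{\alpha}\in\mathbb{N}$ such that $p,q\geq n_{\alpha}$ implies $\mathrm{ord}_{z}(g_{p}-g_{q})>\alpha$; equivalently, the coefficients $[g_{n}]_{\beta,\boldsymbol{m}}$ stabilize for every $(\beta,\boldsymbol{m})$ with $\beta\leq\alpha$ as soon as $n\geq n_{\alpha}$. This lets me define a candidate limit coefficientwise by $[g]_{\alpha,\boldsymbol{m}}:=[g_{n_{\alpha}}]_{\alpha,\boldsymbol{m}}$; the definition is independent of the chosen threshold by the stabilization property.

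The main obstacle, and the only non-routine point, is proving that $\mathrm{Supp}(g)$ is well-ordered in $\mathbb{R}_{\geq 0}\times\mathbb{Z}^{k}$, since otherwise $g$ would not belong to $\mathcal{L}_{k}$. To this end, take a nonempty $A\subseteq\mathrm{Supp}(g)$ and pick any element $(\alpha,\boldsymbol{m})\in A$. By the stabilization, every $(\beta,\boldsymbol{n})\in A$ with $\beta\leq\alpha$ lies in $\mathrm{Supp}(g_{n_{\alpha}})$, which is well-ordered by hypothesis; hence $A\cap\{\beta\leq\alpha\}$ has a minimum, which is automatically the minimum of $A$ (since everything else in $A$ has strictly larger first coordinate, hence is larger in the lexicographic order). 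This yields well-orderedness, so $g\in\mathcal{L}_{k}$, and convergence $g_{n}\to g$ in $d_{z}$ is then immediate from the stabilization.

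For the subspaces $\mathcal{L}_{k}^{\geq\beta}$ I would just remark that they are closed in $(\mathcal{L}_{k},d_{z})$: if $\mathrm{Supp}_{z}(g_{n})\geq\beta$ for all $n$ and $g_{n}\to g$, then the constructed coefficientwise limit has no nonzero coefficient on any $z^{\gamma}$ with $\gamma<\beta$, so $g\in\mathcal{L}_{k}^{\geq\beta}$; a closed subspace of a complete metric space is complete. For the block spaces $(\mathcal{B}_{m},d_{m})$ the identical argument works verbatim, replacing $\mathrm{ord}_{z}$ by $\mathrm{ord}_{\boldsymbol{\ell}_{m}}$ and $\mathbb{R}_{\geq 0}\times\mathbb{Z}^{k}$ by the well-ordered set $\mathbb{Z}^{k-m+1}$ with its lexicographic order; the constraints $\mathrm{ord}_{\boldsymbol{\ell}_{m}}(R)>0$ (for $\mathcal{B}_{m}^{+}$) and $\mathrm{ord}(R)>\boldsymbol{0}_{k+1}$ (for $\mathcal{B}_{\geq m}^{+}$) are again closed conditions, so completeness passes to these subspaces. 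The only step that required genuine work was the well-orderedness of the support of the limit; everything else is bookkeeping.
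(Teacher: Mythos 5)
Your proof of completeness of $(\mathcal{L}_{k},d_{z})$ is essentially identical to the paper's: stabilization of coefficients defines a candidate limit, and the only substantive step is that an arbitrary nonempty $A\subseteq\mathrm{Supp}(g)$ with a chosen $(\alpha,\boldsymbol{m})\in A$ has its initial segment trapped in the well-ordered set $\mathrm{Supp}(g_{n_{\alpha}})$, yielding a minimum. The paper dispatches the remaining spaces with ``the same pattern,'' whereas you spell out the closed-subspace argument for the restricted spaces and the analogous $\boldsymbol{\ell}_{m}$-valuation argument for the block spaces, which is fine; the one slip is your parenthetical describing $\mathbb{Z}^{k-m+1}$ with the lexicographic order as ``well-ordered'' (it is not---what is well-ordered is the support of each individual block), but this misstatement plays no role in the argument.
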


\subsection{A list of useful formulas}

Several results of the following sections make use of elementary formulas
involving the various notions of orders introduced above, as well
as the various distances and the derivatives $\frac{\mathrm{d}}{\mathrm{d}z}$,
$\frac{\mathrm{d}}{\mathrm{d}\boldsymbol{\ell}_{m}}$ and $D_{m}$
($m\in\mathbb{N}_{\geq 1}$). We give a list of some of them here, in order
to refer to them when needed. We omit their proofs, which consist
in straightforward computations.

The reader can skip this technical section at first reading. Here,
$\mu>0$, $H\in\mathcal{B}_{1}$, $G\in\mathcal{B}_{1}$ and $n_{1},\ldots,n_{k}\in\mathbb{Z}$, $k\in\mathbb N_{\geq 1}$. 
\begin{enumerate}[\(\bullet\), font=\textup, nolistsep, leftmargin=0.6cm]
\item 
\begin{align*}
\frac{\mathrm{d}}{\mathrm{d}z}z^{\mu} & \boldsymbol{\ell}_{1}^{n_{1}}\cdots\boldsymbol{\ell}_{k}^{n_{k}}\\
= & z^{\mu-1}\left(\mu\boldsymbol{\ell}_{1}^{n_{1}}\cdots\boldsymbol{\ell}_{k}^{n_{k}}+n_{1}\boldsymbol{\ell}_{1}^{n_{1}+1}\boldsymbol{\ell}_{2}^{n_{2}}\cdots\boldsymbol{\ell}_{k}^{n_{k}}+\cdots+n_{k}\boldsymbol{\ell}_{1}^{n_{1}+1}\boldsymbol{\ell}_{2}^{n_{2}+1}\cdots\boldsymbol{\ell}_{k}^{n_{k}+1}\right).
\end{align*}
\item ~
\begin{equation}
D_{1}\left(H\right)=\boldsymbol{\ell}_{1}\cdots\boldsymbol{\ell}_{m-1}D_{m}\left(H\right),\ H\in\mathcal B_m,\ m\geq 2.\label{eq:lien D1 Dm}
\end{equation}
\item If $\mathrm{ord}\left(H\right)=\left(0,\ldots0,n_{m},\ldots n_{k}\right)$, $1\leq m\leq k$,
with $n_{m}\ne0$, then 
\begin{equation}
\mathrm{ord}\left(D_{1}\left(H\right)\right)=\left(1,\ldots1,n_{m}+1,n_{m+1},\ldots,n_{k}\right).\label{eq:ord D1 H}
\end{equation}
\item \,
\begin{align*}
\left(zH\right)' & =H+D_{1}\left(H\right),\thinspace\left(zH\right)''=z^{-1}\left(D_{1}\left(H\right)+D_{1}^{2}\left(H\right)\right)=z^{-1}\left(D_{1}\left(H\right)+C_{2}\left(H\right)\right),\cdots,\\
\left(zH\right)^{\left(i\right)} & =z^{-\left(i-1\right)}\left(\left(i-2\right)!\left(-1\right)^{i}D_{1}\left(H\right)+C_{i}\left(H\right)\right),\thinspace i\ge2,
\end{align*}
where $C_{i}\colon\left(\mathcal{B}_{1},d_{1}\right)\rightarrow\left(\mathcal{B}_{1},d_{1}\right)$ for $i\geq 2$
is a linear $\frac{1}{4}$-contraction. More generally, if $\mu\in\mathbb{R}_{>1}$:
\begin{align}
\left(z^{\mu}H\right)' & =z^{\mu-1}\left(\mu H+D_{1}\left(H\right)\right)=z^{\mu-1}\left(\mu H+C_{1}\left(H\right)\right),\nonumber \\
\left(z^{\mu}H\right)'' & =z^{\mu-2}\left(\mu\left(\mu-1\right)H+C_{2}\left(H\right)\right),\cdots,\nonumber \\
\left(z^{\mu}H\right)^{\left(i\right)} & =z^{\mu-i}\left(\mu\left(\mu-1\right)\cdots\left(\mu-i+1\right)H+C_{i}\left(H\right)\right),\label{eq:i-th derivative block}
\end{align}
where $C_{i}\colon\left(\mathcal{B}_{1},d_{1}\right)\rightarrow\left(\mathcal{B}_{1},d_{1}\right)$
for $i\ge1$ is a linear $\frac{1}{2}$-contraction. \\

\item Let $H\in\mathcal{B}_{1}$ and $\lambda>0$. Then: 
\begin{equation}
	H(\lambda z)=H+\log\lambda\cdot D_{1}(H)+C(H),\label{eq:H applied to lambda z}
\end{equation}
where $C:\left(\mathcal{B}_{1},d_{1}\right)\to\left(\mathcal{B}_{1},d_{1}\right)$
is a linear $\frac{1}{4}$-contraction.\\

As an example, let us give a proof of formula \eqref{eq:H applied to lambda z}. Note that 
\[
\boldsymbol{\ell}_{1}(\lambda z)=\boldsymbol{\ell}_{1}+\log\lambda\cdot\boldsymbol{\ell}_{1}^{2}+\mathrm{h.o.b.}(\boldsymbol{\ell}_{1}).
\]
Since, for $k\in\mathbb{N}$, $\boldsymbol{\ell}_{k+1}(\lambda z)=\boldsymbol{\ell}_{1}\big(\boldsymbol{\ell}_{k}(\lambda z)\big)$,
we easily see by induction that 
\begin{align*}
\boldsymbol{\ell}_{m}\left(\lambda z\right) & =\boldsymbol{\ell}_{m}+\log\lambda\cdot\boldsymbol{\ell}_{1}\cdots\boldsymbol{\ell}_{m-1}\boldsymbol{\ell}_{m}^{2}+\mathrm{h.o.b.}(\boldsymbol{\ell}_{1}),\ m\in\mathbb{N}.
\end{align*}
Therefore, 
\begin{align}
\boldsymbol{\ell}_{m}^{n}\left(\lambda z\right) & =\boldsymbol{\ell}_{m}^{n}+n\log\lambda\cdot\boldsymbol{\ell}_{1}\cdots\boldsymbol{\ell}_{m-1}\boldsymbol{\ell}_{m}^{n+1}+\mathrm{h.o.b.}(\boldsymbol{\ell}_{1})\nonumber \\
& =\boldsymbol{\ell}_{m}^{n}+\log\lambda\cdot D_{1}\left(\boldsymbol{\ell}_{m}^{n}\right)+\mathrm{h.o.b.}(\boldsymbol{\ell}_{1}),\ m\in\mathbb{N},\ n\in\mathbb{Z}.\label{eq:lmsupn applied to lambda z-1}
\end{align}
Using \eqref{eq:lmsupn applied to lambda z-1}, we now verify \eqref{eq:H applied to lambda z}
for the terms $H=a_{n_{1},\ldots,n_{k}}\boldsymbol{\ell}_{1}^{n_{1}}\ldots\boldsymbol{\ell}_{k}^{n_{k}}\in\mathcal{B}_{1}$,
$n_{i}\in\mathbb{Z}$, $i=1,\ldots,k$, $a_{n_{1},\ldots,n_{k}}\in\mathbb{R}$.
Finally, for general blocks $H\in\mathcal{B}_{1}$, \eqref{eq:H applied to lambda z}
holds, after regrouping the terms, by superlinearity of the derivative
$D_{1}$. \\

\item Using \eqref{eq:i-th derivative block} and \eqref{eq:H applied to lambda z}, for $\lambda\in\mathbb{R}_{>0}$:
\begin{align}
&\sum_{i\ge1}\frac{\left(zH\right)^{\left(i\right)}\left(\lambda z\right)}{i!}\left(zG\right)^{i} \nonumber\\
&=zGH+\left(\log\lambda\right)zGD_{1}\left(H\right)+zD_{1}\left(H\right)\left(G+\lambda\sum_{i\ge2}\dfrac{\left(-1\right)^{i}}{i\left(i-1\right)}\left(\dfrac{G}{\lambda}\right)^{i}\right)+zC\left(H\right)\nonumber\\
&=z\left(GH+\left(\log\lambda\right)GD_{1}\left(H\right)+\lambda D_{1}\left(H\right)\left(1+\frac{G}{\lambda}\right)\log\left(1+\frac{G}{\lambda}\right)+C\left(H\right)\right), \label{eq:formule 4 dino}
\end{align}
where $C\colon\left(\mathcal{B}_{1},d_{1}\right)\rightarrow\left(\mathcal{B}_{1},d_{1}\right)$
is a linear $\frac{1}{4}$-contraction. \\

\item By \eqref{eq:i-th derivative block} and \eqref{eq:H applied to lambda z}, after regrouping:
\begin{align}
\sum_{i\ge1}\frac{\left(z^{\mu}H\right)^{\left(i\right)}\left(\lambda z\right)}{i!} \left(zG\right)^{i}&=z^{\mu}\lambda^{\mu}H\sum_{i\ge1}\binom{\mu}{i}\left(\frac{G}{\lambda}\right)^{i}+z^{\mu}C\left(H\right)\label{eq:taylor block lambda z}\nonumber\\
 & =z^{\mu}\left(\lambda^{\mu}H\left(\left(1+\frac{G}{\lambda}\right)^{\mu}-1\right)+C\left(H\right)\right),
\end{align}
where $C\colon\left(\mathcal{B}_{1},d_{1}\right)\rightarrow\left(\mathcal{B}_{1},d_{1}\right)$
is a linear $\frac{1}{2}$-contraction.
\end{enumerate}

\section{\label{sec:proof theorem}The proof of the Main Theorem}

Let $k\in\mathbb{N}$ and $f\in\mathcal{L}_{k}^{H}$ be hyperbolic.
We write, according to\,\eqref{eq:decomp f0 plus g}, 
\[
{f}={f}_{0}+{g}.
\]
Let ${g}=z^{\beta}{R}_{\beta}+\mathrm{h.o.b.}(z)$, where $\beta=\mathrm{ord}_{z}\left(g\right)$,
${R}_{\beta}\in\mathcal{B}_{1}$ and $\mathrm{ord}(z^{\beta}{R}_{\beta})>\mathbf{1}_{k+1}$.
We distinguish two cases:
\begin{enumerate}[1., font=\textup, nolistsep, leftmargin=0.6cm]
\item Case (a) : $\beta>1$. 
\item Case (b) : $\beta=1$. 
\end{enumerate}
The proof of the first part of the Main Theorem is done in Subsections~\ref{subsec:proof case a}
(case (a)) and\,\ref{subsec:proof case b} (case (b)). The second
part (minimality of the formal normal form) is proven in Subsection~\ref{subsec:minimality normal forms}.
Finally, the third part (convergence of the generalized Koenigs sequence
to the normalizing change of variables) is proven in Subsection~\ref{subsec:convergence koenigs}.

\subsection{Proof of case (a): $\beta>1$}

\label{subsec:proof case a}

The purpose of the following lemma is to verify that the operators
$\mathcal{T}_{f}$ and $\mathcal{S}_{f}$ satisfy the hypotheses of
Proposition \ref{prop:krasno fixed point}.
\begin{lem}
\label{lem:properties Sf Tf }Let $k\in\mathbb{N}$ and ${f}(z)=\lambda z+\mathrm{h.o.t.}\in\mathcal{L}_{k}^{H}$,
with $0<\lambda<1.$ Let ${f}_{0}$ and ${g}$ be as in decomposition
\eqref{eq:decomp f0 plus g} and $\beta:=\mathrm{ord}_{z}({g})>1$.
Let $\mathcal{T}_{{f}}$ and $\mathcal{S}_{{f}}$ be the operators
defined in \eqref{eq:operators S and T}. Then: 
\begin{enumerate}[1., font=\textup, nolistsep, leftmargin=0.6cm]
\item $\mathcal{L}_{k}^{\ge\beta}$ is invariant under $\mathcal{T}_{{f}}$
and $\mathcal{S}_{{f}}\,$,
\item $\mathcal{S}_{{f}}$ is a $\frac{1}{2^{\beta-1}}$-contraction on
the space $\big(\mathcal{L}_{k}^{\ge\beta},d_{z}\big)$,
\item $\mathcal{T}_{{f}}$ is an isometry and a surjection on the space
$\big(\mathcal{L}_{k}^{\geq\beta},d_{z}\big)$.
\end{enumerate}
The same holds for the spaces $\mathcal{L}_{m}^{\geq\beta}$, in place
of $\mathcal{L}_{k}^{\ge\beta}$, for all $m\geq k$. 
\end{lem}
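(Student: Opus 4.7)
The plan is to exploit the $\mathbb{R}$-linearity of $\mathcal{T}_f$, the $z$-blockwise structure shared by both operators, and the Taylor-type identities gathered in Section~3.7. Writing $\mathcal{T}_f(h) = h - \frac{1}{\lambda} h \circ f_0 + \frac{1}{\lambda} g_0' \cdot h$ and $\mathcal{L}_f := \mathcal{S}_f - \frac{1}{\lambda} g$, the key dichotomy is that $\mathcal{L}_f$ raises $\mathrm{ord}_z$ by the positive amount $\beta - 1$ (by \eqref{eq:ell}), whereas $\mathcal{T}_f$ preserves $\mathrm{ord}_z$ exactly.

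\textbf{Parts 1 and 2.} Invariance of $\mathcal{L}_k^{\ge\beta}$ under $\mathcal{S}_f$ is immediate from \eqref{eq:ell}: if $\mathrm{ord}_z(h) \geq \beta$, then $\mathrm{ord}_z(\mathcal{L}_f(h)) \geq 2\beta - 1 \geq \beta$, while $\frac{1}{\lambda}g$ has $\mathrm{ord}_z = \beta$. For the contraction estimate I would compute $\mathcal{S}_f(h_1) - \mathcal{S}_f(h_2) = \mathcal{L}_f(h_1) - \mathcal{L}_f(h_2)$ piece by piece. Using Proposition~\ref{prop:formal Taylor}, the difference $(h_1 - h_2) \circ f - (h_1 - h_2) \circ f_0 = \sum_{i\geq 1} \frac{(h_1-h_2)^{(i)}(f_0)}{i!} g^i$ has $\mathrm{ord}_z \geq \mathrm{ord}_z(h_1 - h_2) - 1 + \mathrm{ord}_z(g) = \mathrm{ord}_z(h_1-h_2) + \beta - 1$, and the quadratic and higher Taylor remainders $g_0(\mathrm{id}+h_1) - g_0(\mathrm{id}+h_2) - g_0' \cdot (h_1 - h_2)$ have order at least $\mathrm{ord}_z(h_1 - h_2) + \beta - 1$ (by combining $\mathrm{ord}_z(g_0^{(i)}) = 1 - i$ from \eqref{eq:i-th derivative block} with $\mathrm{ord}_z(h_1^i - h_2^i) \geq \mathrm{ord}_z(h_1 - h_2) + (i-1)\beta$). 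Both estimates translate into the desired $\frac{1}{2^{\beta-1}}$-contraction bound for $d_z$.

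\textbf{Part 3.} Writing $f_0 = \lambda z(1 + H/\lambda)$ with $H \in \mathcal{B}_1$, one has for a pure block $h = z^\alpha R_\alpha$ that $h \circ f_0 = \lambda^\alpha z^\alpha (1 + H/\lambda)^\alpha R_\alpha(f_0)$, so $\mathcal{T}_f$ is diagonal in $z$-blocks. Using \eqref{eq:H applied to lambda z} for $R_\alpha(f_0)$ and expanding $(1+H/\lambda)^\alpha$ by the binomial series, I would obtain
\[
\mathcal{T}_f(z^\alpha R_\alpha) = z^\alpha \bigl( (1 - \lambda^{\alpha - 1}) R_\alpha + B_\alpha(R_\alpha) \bigr),
\]
where $B_\alpha$ is a sum of two kinds of pieces: (i) $d_1$-contractions such as $D_1$ and the maps $C, C_i$ of Section~3.7, each with a bounded prefactor involving $\lambda^{\alpha - 1}$, $\log\lambda$, or binomial coefficients $\binom{\alpha}{i}$; (ii) multiplications by strictly positive-lex-order blocks arising from $H$ or from $(1+H/\lambda)^\alpha - 1$. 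Since $\alpha \geq \beta > 1$ and $0 < \lambda < 1$ force $1 - \lambda^{\alpha - 1} \geq 1 - \lambda^{\beta - 1} > 0$, the full-lex leading term of $\mathcal{T}_f(z^\alpha R_\alpha)$ is $(1 - \lambda^{\alpha - 1}) \cdot \mathrm{Lt}(z^\alpha R_\alpha)$, which simultaneously yields the $\mathcal{L}_k^{\ge\beta}$-invariance of $\mathcal{T}_f$ and the isometry $\mathrm{ord}_z(\mathcal{T}_f(h)) = \mathrm{ord}_z(h)$. For surjectivity, the diagonal $z$-block structure reduces the problem to showing that each $\mathcal{T}_\alpha := R \mapsto (1 - \lambda^{\alpha - 1}) R + B_\alpha(R)$ is surjective on $\mathcal{B}_1$; I would do this by iterating the Banach fixed-point theorem down the nested $\boldsymbol{\ell}_m$-block structure of $\mathcal{B}_1$, using Proposition~\ref{prop:complete spaces} at each level.

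\textbf{Main obstacle.} The principal difficulty will be inverting $\mathcal{T}_\alpha$ block-by-block. The naive idea of writing $\mathcal{T}_\alpha = (1 - \lambda^{\alpha - 1})(I + (1 - \lambda^{\alpha - 1})^{-1} B_\alpha)$ and applying a single Banach fixed-point argument in $(\mathcal{B}_1, d_1)$ fails because the $H$-multiplication pieces of $B_\alpha$ need not be $d_1$-contractions: $H$ may have $\mathrm{ord}_{\boldsymbol{\ell}_1} = 0$, only positive lex-order. This forces the finer nested $\boldsymbol{\ell}_m$-argument, and the bookkeeping of the various contraction constants uniformly in $\alpha \geq \beta$ is where most of the technical effort will go. The extension from $\mathcal{L}_k^{\geq \beta}$ to $\mathcal{L}_m^{\geq \beta}$ for $m \geq k$ is then immediate, since $f \in \mathcal{L}_k^H \subseteq \mathcal{L}_m^H$ and all the above computations are performed inside the larger ambient space.
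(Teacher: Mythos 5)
Parts 1 and 2 of your proposal match the paper's proof closely, and your isometry argument (leading term $(1-\lambda^{\alpha-1})\mathrm{Lt}(z^\alpha R_\alpha)$) is also the same computation the paper performs. The real divergence is in the surjectivity of $\mathcal{T}_f$, where your diagnosis of the obstacle is correct but your conclusion that it forces a nested $\boldsymbol{\ell}_m$-argument is not.

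You observe, correctly, that after factoring out the \emph{scalar} $1-\lambda^{\alpha-1}$, the residual operator $B_\alpha$ contains multiplications by blocks such as $Q := z^{-1}g_0$ and pieces of $(1+Q/\lambda)^\alpha-1$, which have positive lex-order but possibly $\mathrm{ord}_{\boldsymbol{\ell}_1}=0$; such multiplications are not $d_1$-contractions, so a single Banach argument with that factorisation fails. What you miss is that one can factor out the entire non-contracting \emph{block} rather than just the scalar. Concretely, regrouping the terms of $\mathcal{T}_f(z^\gamma H_\gamma)$ via \eqref{eq:i-th derivative block}, \eqref{eq:H applied to lambda z} and \eqref{eq:taylor block lambda z}, one obtains
\[
\mathcal{T}_f(z^\gamma H_\gamma)=\frac{z^\gamma}{\lambda}\Big(\big(\lambda-\lambda^\gamma+Q-\lambda^\gamma\textstyle\sum_{i\ge1}\binom{\gamma}{i}(Q/\lambda)^i\big)H_\gamma+\text{(genuine $d_1$-contractions in $H_\gamma$)}\Big),
\]
where the contraction part consists of $D_1(H_\gamma)$, $\mathcal{C}(H_\gamma)$, $\mathcal{K}(H_\gamma)$ and $H_\gamma\,D_1(Q)$, all of which raise $\mathrm{ord}_{\boldsymbol{\ell}_1}$ by at least $1$ (for the last one because $D_1$ applied to $Q\in\mathcal{B}_{\geq1}^+$ always has $\mathrm{ord}_{\boldsymbol{\ell}_1}\geq1$ by \eqref{eq:ord D1 H}). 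The block coefficient of $H_\gamma$ has nonzero constant term $\lambda-\lambda^\gamma$ (here $\gamma\geq\beta>1$ is used), hence is a unit in $\mathcal{B}_1$ with $\mathrm{ord}_{\boldsymbol{\ell}_1}=0$; dividing through by it produces the fixed-point operator $\mathcal{S}_1$ of \eqref{eq:S1}, which is then a single linear $\tfrac12$-contraction on $(\mathcal{B}_1,d_1)$. One application of Banach's theorem gives the preimage block, with no nested $\boldsymbol{\ell}_m$-descent and no bookkeeping of contraction constants across levels. Your proposed cascade of fixed-point arguments down the $\boldsymbol{\ell}_m$-filtration is in the spirit of what the paper does later (Lemma~\ref{lem:differential equation D1}, for case~(b)), and would likely succeed, but it is unnecessary here and substantially more work; the claim that the one-shot argument ``fails'' should be withdrawn.

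A minor point: the paper also explicitly verifies uniqueness of the preimage by the same contraction argument, which your sketch only implicitly covers through Banach's theorem, and it records that the argument carries over verbatim to $\mathcal{L}_m^{\geq\beta}$ for $m\geq k$ since $g\in\mathcal{L}_m^{\geq\beta}$ and all formulas live in the larger space, as you note at the end.
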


\begin{proof}
\,

\emph{Proof of 1.} Note that ${g}\in\mathcal{L}_{m}^{\geq\beta}$,
for every $m\geq k$. The invariance of $\mathcal{L}_{m}$, $m\geq k$,
and of the subspaces $\mathcal{L}_{m}^{\geq\beta}$ under $\mathcal{T}_{{f}}$
and $\mathcal{S}_{{f}}$ follows easily from \eqref{eq:operators S and T}
and Proposition \ref{prop:formal Taylor}. \\

\emph{Proof of 2.} Thanks to Proposition \ref{prop:formal Taylor}, the operator
$\mathcal{S}_{f}$ defined in \eqref{eq:operators S and T} can be
expanded as 
\[
\mathcal{S}_{f}\left(h\right)=\dfrac{1}{\lambda}g+\frac{1}{\lambda}\sum_{i\ge1}\frac{h^{\left(i\right)}\circ f_{0}}{i!}g^{i}-\frac{1}{\lambda}\sum_{i\ge2}\frac{g_{0}^{\left(i\right)}}{i!}h^{i},\thinspace h\in\mathcal{L}_{>\mathrm{id}}.
\]Let ${h}_{1},{h}_{2}\in\mathcal{L}_{m}^{\geq\beta}$,
$m\geq k$. Then $\mathrm{ord}_{z}({h}_{1}),\mathrm{ord}_{z}({h}_{1})\geq\beta$.
Since $\beta=\mathrm{ord}_{z}({g})$, we obtain 
\begin{align}
\begin{split}\mathrm{ord}_{z}\Bigg(\sum_{i\geq1}\frac{{h}_{1}^{(i)}\circ{f}_{0}}{i!}g^{i}-\sum_{i\geq1}\frac{{h}_{2}^{(i)}\circ{f}_{0}}{i!}g^{i}\Bigg) & =\mathrm{ord}_{z}\Bigg(\sum_{i\geq1}\frac{\left({h}_{1}-{h}_{2}\right)^{(i)}\circ{f}_{0}}{i!}g^{i}\Bigg)\\
 & =\mathrm{ord}_{z}({h}_{1}-{h}_{2})+\beta-1,
\end{split}
\label{eq:one}
\end{align}
and 
\begin{eqnarray}
\begin{split}\mathrm{ord}_{z}\Bigg(\sum_{i\geq2}\frac{{g}_{0}^{(i)}}{i!}{h}_{1}^{i}-\sum_{i\geq2}\frac{{g}_{0}^{(i)}}{i!}{h}_{2}^{i}\Bigg) & =\mathrm{ord}_{z}\Bigg(\sum_{i\geq2}\frac{{g}_{0}^{(i)}}{i!}\left({h}_{1}-{h}_{2}\right)\Big(\sum_{j=0}^{i-1}{h}_{1}^{j}{h}_{2}^{i-j-1}\Big)\Bigg)\\
 & \geq\mathrm{ord}_{z}(h_{1}-h_{2})+\beta-1.
\end{split}
\label{eq:two}
\end{eqnarray}
The equations \eqref{eq:one} and \eqref{eq:two} imply that $\mathcal{S}_{f}$
is a $\frac{1}{2^{\beta-1}}$-contraction on the space $\left(\mathcal{L}_{k},d_{z}\right)$,
as well as on the spaces $\mathcal{L}_{m}^{\geq\beta}$, $m\geq k$.
\\

\emph{Proof of 3.} We first prove that $\mathcal{T}_{{f}}$ is an
isometry on $\left(\mathcal{L}_{m}^{\geq\beta},d_{z}\right)$, $m\geq k$.
As we did for the operator $\mathcal{S}_{f}$, we use Proposition~\ref{prop:formal Taylor}
to expand $\mathcal{T}_{f}$ from \eqref{eq:operators S and T} in the following way:
\begin{align}
	\mathcal{T}_{f}\left(h\right) &=h-\frac{1}{\lambda}h\left(\lambda z\right)-\frac{1}{\lambda}\sum_{i\ge1}\frac{h^{\left(i\right)}\left(\lambda z\right)}{i!}g_{0}^{i}+\frac{1}{\lambda}g_{0}'\cdot h,\thinspace h\in\mathcal{L}_{>\mathrm{id}}. \label{EqTf}
\end{align}
Let ${h}=z^{\alpha}H_{\alpha}+\mathrm{h.o.b.}(z)\in\mathcal{L}_{m}^{\geq\beta},\ m\geq k,$
where ${H}_{\alpha}\in\mathcal{B}_{1}$, $\alpha\geq\beta$. Analyzing
the orders of the terms of $\mathcal{T}_{{f}}({h})$ in \eqref{EqTf},
using \eqref{eq:H applied to lambda z} to expand $h\left(\lambda z\right)$, the fact that $\mathrm{ord}({g}_{0})>(1,0,\ldots,0)_{m+1}$ and $\lambda\neq 1$, $\alpha >1$, we
conclude that
\[
\mathrm{ord}(\mathcal{T}_{{f}}({h}))=\mathrm{ord}(z^{\alpha}{H}_{\alpha}).
\]
Hence, 
\begin{equation}
\mathrm{ord}_{z}(\mathcal{T}_{{f}}({h}))=\mathrm{ord}_{z}(z^{\alpha}{H}_{\alpha})=\mathrm{ord}_{z}(h).\label{eq:eqo}
\end{equation}
Therefore, $\mathcal{T}_{{f}}$ is a linear isometry on the spaces
$\big(\mathcal{L}_{m}^{\geq\beta},d_{z}\big)$, $m\geq k$.\\

It remains to prove that $\mathcal{T}_{{f}}:\mathcal{L}_{k}^{\geq\beta}\to\mathcal{L}_{k}^{\geq \beta}$
is a surjection, and that this also holds if we replace $\mathcal{L}_{k}^{\geq \beta}$
by $\mathcal{L}_{m}^{\geq\beta}$, $m\geq k$. Due to the superlinearity
of $\mathcal{T}_{{f}}$, it is sufficient to prove that, for every
block $z^{\gamma}{M}_{\gamma}\in\mathcal{L}_{k}^{\ge\beta},\ {M}_{\gamma}\in\mathcal{B}_{1}$,
there exists a block $z^{\alpha}{H}_{\alpha}\in\mathcal{L}_{k}^{\ge\beta}$, $H_{\alpha }\in \mathcal{B}_{1}$, such that 
\begin{equation}
\mathcal{T}_{f}\big(z^{\alpha}H_{\alpha}\big)=z^{\gamma}M_{\gamma}.\label{eq:Tf surjection}
\end{equation}
The idea there is to prove the existence of a solution to \eqref{eq:Tf surjection}
by reformulating this equation as a fixed point equation for a suitable
contraction on the complete space $\left(\mathcal{B}_{1},d_{1}\right)$. 

First, as $\mathcal{T}_{f}$ is an isometry, $\alpha=\gamma$. Write
${g}_{0}=z{Q}$, with ${Q}\in\mathcal{B}_{\geq 1}^{+}$. Using \eqref{eq:i-th derivative block},
\eqref{eq:taylor block lambda z} and \eqref{eq:H applied to lambda z},
we regroup the elements of the left-hand side $\mathcal{T}_{{f}}\big(z^{\gamma}{H}_{\gamma}\big)$
of \eqref{eq:Tf surjection} as 
\begin{align*}
\lambda z^{\gamma}H_{\gamma}-\big(z^{\gamma}H_{\gamma}\big)(\lambda z) & =\big(\lambda-\lambda^{\gamma}\big)z^{\gamma}H_{\gamma}-\lambda^{\gamma}z^{\gamma}\big(\log\lambda\cdot D_{1}(H_{\gamma})+\mathcal{C}(H_{\gamma})\big),\\
\big(zQ\big)'\cdot z^{\gamma}{H}_{\gamma} & =z^{\gamma}\big(Q+D_{1}(Q)\big)H_{\gamma},\\
\sum_{i\geq1}\frac{(z^{\gamma}H_{\gamma})^{(i)}(\lambda z)}{i!}(zQ)^{i} & =z^{\gamma}\lambda^{\gamma}H_{\gamma}\sum_{i\geq1}{\gamma \choose i}\left(\frac{Q}{\lambda}\right)^{i}+z^{\gamma}\mathcal{K}({H}_{\gamma}),
\end{align*}
where $\mathcal{C}$ from \eqref{eq:H applied to lambda z} is a linear $\frac{1}{4}$-contraction
on $(\mathcal{B}_{1},d_{1})$ and $\mathcal{K}$ from \eqref{eq:taylor block lambda z}
is a linear $\frac{1}{2}$-contraction on $\left(\mathcal{B}_{1},d_{1}\right)$:
\[
\mathrm{ord}_{\boldsymbol{\ell}_{1}}(\mathcal{K}(H_{\gamma}))\geq\mathrm{ord}_{\boldsymbol{\ell}_{1}}(H_{\gamma})+1.
\]
Finally, the operators $D_{1},\ \mathcal{C}$ and $\mathcal{K}$ do
not decrease the powers of the variables $\boldsymbol{\ell}_{m}$,
for $1\leq m\leq k$. Hence, after dividing by $z^{\gamma}$, these
identities allow to rewrite \eqref{eq:Tf surjection} as the following
fixed point equation: 
\[
H_{\gamma}=\mathcal{S}_{1}(H_{\gamma}),
\]
where $\mathcal{S}_{1}:\mathcal{B}_{1}\to\mathcal{B}_{1}$ is the
operator defined by 
\begin{equation}
\mathcal{S}_{1}(H):=\frac{\lambda^{\gamma}\big(\log\lambda\cdot D_{1}(H)+\mathcal{C}(H)\big)-H\,D_{1}(Q)+\mathcal{K}(H)+M_{\gamma}}{\lambda-\lambda^{\gamma}+Q-\lambda^\gamma\sum_{i\geq1}{\gamma \choose i}\left(\frac{Q}{\lambda}\right)^{i}},\ {H}\in\mathcal{B}_{1}.\label{eq:S1}
\end{equation}
Note that $\gamma\geq\beta>1$, so $\lambda-\lambda^{\gamma}\neq0.$
Hence, thanks to \eqref{eq:ord D1 H}, $\mathcal{S}_{1}$ is a linear $\frac{1}{2}$-contraction
on the space $\big(\mathcal{B}_{1},d_{1}\big)$, which is complete
by Proposition~\ref{prop:complete spaces}. It follows from the Banach
fixed point theorem that $\mathcal{S}_{1}$ has a unique fixed point
in $\mathcal{B}_{1}$, so that the block $z^{\gamma}M_{\gamma}$ has
a unique preimage $z^{\gamma}H_{\gamma}\in\mathcal{L}_{m}^{\ge\beta}$
by $\mathcal{T}_{f}$. 

Finally, thanks to the superlinearity of $\mathcal{T}_{{f}}$, we
conclude that $\mathcal{T}_{f}$ is surjective on $\mathcal{L}_{m}^{\ge\beta}$.
\end{proof}
We can now conclude the proof of the Main Theorem in case (a) .
\begin{proof}[Proof of case (a)]
By Proposition~\ref{prop:complete spaces}, the space $\mathcal{L}_{k}^{\ge\beta}$
is complete. By Lemma~\ref{lem:properties Sf Tf } and Proposition~\ref{prop:krasno fixed point},
the equation $\mathcal{T}_{{f}}({h})=\mathcal{S}_{{f}}({h})$ has
a unique solution ${h}\in\mathcal{L}_{k}^{\ge\beta}$ . Since $\beta>1$,
it follows that ${h}\in\mathfrak{L}_{>\mathrm{id}}$.\\

We now prove the uniqueness of the solution of 
\begin{equation}
\mathcal{T}_{f}(h)=\mathcal{S}_{f}(h)\label{eq:temp}
\end{equation}
in the larger space\emph{ $\mathfrak{L}_{>\mathrm{id}}$}. Suppose
that there exists another solution ${h}_{1}\in\mathcal{L}_{m}\cap\mathfrak{L}_{>\mathrm{id}}$,
for some $m\geq k$, of \eqref{eq:temp}, where ${h}_{1}\neq{h}$.

We prove that $\mathrm{ord}_{z}({h}_{1})\geq\beta$. To this end we
introduce the operators 
\[
\widetilde{\mathcal{S}}_{f}\left(h\right):=\frac{1}{\lambda}\left(g+\left(h\circ f-h\circ f_{0}\right)\right)
\]
and 
\[
\widetilde{\mathcal{T}}_{f}\left(h\right):=\dfrac{1}{\lambda}\left(\left(\lambda h-h\left(\lambda z\right)\right)-\left(h\circ f_{0}-h\left(\lambda z\right)\right)+g_{0}'\cdot h+\left(g_{0}\left(\mathrm{id}+h\right)-g_{0}-g_{0}'\cdot h\right)\right)
\]
obtained by moving the last term of $\mathcal{S}_{f}\left(h\right)$
to $\mathcal{T}_{f}\left(h\right)$ (in \eqref{eq:operators S and T}). So we have \begin{equation}\label{eq:lst}\widetilde{\mathcal{S}}_{f}\left(h_{1}\right)=\widetilde{\mathcal{T}}_{f}\left(h_{1}\right).\end{equation}
Since $\beta=\mathrm{ord}_{z}({g})$ and $\mathrm{ord}_{z}({h}_{1})\geq1$
for ${h}_{1}\in\mathfrak{L}_{>\mathrm{id}}$, by Taylor expansion (Proposition \ref{prop:formal Taylor}) it follows that 
\begin{equation*}
\mathrm{ord}_{z}\big(\widetilde{\mathcal{S}}_{f}(h_{1})\big)\geq\min\big\{\beta,\mathrm{ord}_{z}{h}_{1}+\beta-1\big\}=\beta.
\end{equation*}
On the other hand, it can be seen as in \eqref{eq:eqo} in the proof
of Lemma~\ref{lem:properties Sf Tf } that we have the identity
\begin{equation*}
\mathrm{ord}_{z}(\widetilde{\mathcal{T}}_{f}(h_{1}))=\mathrm{ord}_{z}(h_{1}).
\end{equation*}
Comparing the orders of the left and the right hand sides of \eqref{eq:lst},
we obtain 
\[
\mathrm{ord}_{z}({h}_{1})\geq\beta.
\]
That is, ${h}_{1}\in\mathcal{L}_{m}^{\geq\beta}$. Recall that the
space $\mathcal{L}_{m}^{\geq\beta}$, $m\in\mathbb{N}$, is complete
by Proposition~\ref{prop:complete spaces}. Hence Lemma~\ref{lem:properties Sf Tf }
and Proposition~\ref{prop:krasno fixed point} give the uniqueness
of the solution of \eqref{eq:temp} in $\mathcal{L}_{m}^{\geq\beta}$.
Now, since $\mathcal{L}_{k}^{\geq\beta}\subseteq\mathcal{L}_{m}^{\geq\beta}$,
both ${h}$ and ${h}_{1}$ belong to $\mathcal{L}_{m}^{\geq\beta}$,
which contradicts the uniqueness of the solution in $\mathcal{L}_{m}^{\geq\beta}$.

Finally, by Proposition~\ref{prop:fixed point equivalent conjugacy},
the uniqueness of the solution ${h}$ of the equation $\mathcal{S}_{{f}}({h})=\mathcal{T}_{{f}}({h})$
in $\mathfrak{L}_{>\mathrm{id}}$ implies the uniqueness of the normalizing
change of variables $\varphi=\mathrm{id}+h$ in the space $\mathfrak{L}^{0}$.
This proves case (a).
\end{proof}

\subsection{Proof of case (b):\ $\beta=1$}

\label{subsec:proof case b}\

We proceed in two steps. First, we eliminate all the terms of the
leading block
of ${f}$ that can be eliminated (namely, those which do not belong
to the normal form $f_{0}$ from \eqref{eq:normal form}). We call
this procedure the \emph{prenormalization} of ${f}$. After prenormalization,
we can apply the results of case (a).

\subsubsection{Prenormalization}

We prove the existence and the uniqueness, up to blocks of order in
$z$ strictly higher than $1$, of a change of variables ${\varphi}_{0}\in\mathfrak{L}^{0}$,
such that: 
\begin{align}
 & {\varphi}_{0}\circ{f}\circ{\varphi}_{0}^{-1}={f}_{0}+\mathrm{h.o.b.}(z),\label{eq:equation conjugacy phi0}\\
 & {\varphi}_{0}\circ{f}={f}_{0}\circ{\varphi}_{0}+\mathrm{h.o.b.}(z),\nonumber 
\end{align}
where ${f}_{0}$ is the formal normal form \eqref{eq:normal form}.
Moreover, we prove that ${\varphi}_{0}\in\mathcal{L}_{k}$. 

First, as in case (a), we rewrite the prenormalization equation \eqref{eq:equation conjugacy phi0}
as a fixed point equation (\eqref{eq:fixed point T0 and S0} in Lemma
\ref{lem:fixed point T0 S0}) on the spaces of blocks. Then, in Lemma~\ref{lem:hypotheses T0 S0},
we prove that the operators $\mathcal{T}_{0}$ and $\mathcal{S}_{0}$
(\eqref{eq:S0 and T0} in Lemma \ref{lem:fixed point T0 S0}) satisfy
all the assumptions of the fixed point Proposition~\ref{prop:krasno fixed point}.
This gives the existence of a unique fixed point, which solves the
prenormalization equation.
\begin{lem}
\label{lem:fixed point T0 S0} Let $k\in\mathbb{N}_{\geq 1}$ and $f(z)=\lambda z+\mathrm{h.o.t.}\in\mathcal{L}_{k}^{H}$,
with $0<\lambda<1.$ Write $f=f_{0}+g$ as in decomposition \eqref{eq:decomp f0 plus g},
with $\mathrm{ord}_{z}({g})=1$.

A transseries $\varphi_{0}=z+zH+\mathrm{h.o.b.}(z),\ \text{with\ }H\in\mathcal{B}_{\geq1}^{+},$
satisfies the prenormalization equation \eqref{eq:equation conjugacy phi0}
if and only if the block $H$ satisfies the equation 
\begin{equation}
\mathcal{T}_{0}(H)=\mathcal{S}_{0}\big(H\big),\label{eq:fixed point T0 and S0}
\end{equation}
where the operators $\mathcal{T}_{0},\,\mathcal{S}_{0}:\mathcal{B}_{\geq1}^{+}\to\mathcal{B}_{1}^{+}\subset\mathcal{B}_{\geq1}^{+}$,
are defined by: 
\begin{align}
\mathcal{T}_{0}(H) & :=\Big(-\lambda\log\lambda-(1+\log\lambda)R_{0}-\lambda\sum_{i\geq2}\frac{(-1)^{i}}{i(i-1)}\left(\frac{R_{0}}{\lambda}\right)^{i}\Big)D_{1}\big(H\big)+\label{eq:S0 and T0}\\
 & \qquad\qquad\qquad\qquad\qquad\qquad+D_{1}(R_{0})\Big(H+\sum_{i\geq2}\frac{(-1)^{i}}{i(i-1)}H^{i}\Big)-HR \, , \nonumber \\
\mathcal{S}_{0}(H) & :=\lambda\mathcal{C}(H)+\mathcal{K}_{1}(H)-\mathcal{K}_{2}(H)+\mathcal{K}_{3}(H)+{R}.\nonumber 
\end{align}
Here, ${R}\in\mathcal{B}_{1}^{+}\subseteq\mathcal{L}_{k}$ is defined
by ${g}=z{R}+\mathrm{h.o.b.}(z)$, and $R_{0}\in\mathcal{B}_{\geq1}^{+}\subseteq\mathcal{L}_{k}$
is defined by ${f}_{0}=\lambda z+zR_{0}$. The operators $\mathcal{C}$,\ $\mathcal{K}_{1},\,\mathcal{K}_{2},\,\mathcal{K}_{3}:\mathcal{B}_{\geq1}^{+}\to\mathcal{B}_{1}^{+}\subset\mathcal{B}_{\geq1}^{+}$
are suitable $\frac{1}{4}$-contractions in the metric $d_{1}$.
\end{lem}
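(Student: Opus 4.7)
The strategy is to expand both sides of the prenormalization equation $\varphi_0 \circ f = f_0 \circ \varphi_0 + \mathrm{h.o.b.}(z)$ blockwise in $z$, extract the order-$1$ block in $z$ on each side, and show that the resulting identity in $\mathcal{B}_{\geq 1}^+$ coincides with $\mathcal{T}_0(H) = \mathcal{S}_0(H)$. Since only the $z$-block is relevant for the prenormalization, everything of order strictly greater than $1$ in $z$ can be absorbed into $\mathrm{h.o.b.}(z)$, which reduces the computation to a finite piece of machinery.

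For the left-hand side, I expand $\varphi_0 \circ f$ by Proposition~\ref{prop:formal Taylor} around $\lambda z$: writing $f = \lambda z + z(R_0+R) + \mathrm{h.o.b.}(z)$ and $\varphi_0 = z + zH + \mathrm{h.o.b.}(z)$, the $z$-block of $\varphi_0 \circ f$ is, modulo $\mathrm{h.o.b.}(z)$, the sum $f + (zH)(\lambda z) + \sum_{i\geq 1}\frac{(zH)^{(i)}(\lambda z)}{i!}(z(R_0+R))^i$. I then apply identity \eqref{eq:H applied to lambda z} to $H(\lambda z)$ and identity \eqref{eq:formule 4 dino} (with $G = R_0+R$) to the sum, together with the closed form $(1+x)\log(1+x) = x + \sum_{i\geq 2}\frac{(-1)^i}{i(i-1)}x^i$. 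This makes the $z$-block of $\varphi_0 \circ f$ an explicit combination of $H$, $D_1(H)$, $R_0$ and $R$, plus a residue which is a $\tfrac{1}{4}$-contraction in $d_1$ coming from the contractions in \eqref{eq:H applied to lambda z} and \eqref{eq:formule 4 dino}.

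For the right-hand side, $f_0 \circ \varphi_0 = \lambda\varphi_0 + \varphi_0\,R_0(\varphi_0)$, where $R_0 \in \mathcal{B}_{\geq 1}^+$ depends only on the iterated logarithms. The key technical input is the analogue of \eqref{eq:lmsupn applied to lambda z-1} for the substitution $z \mapsto z(1+H)$: a direct computation mimicking the one in the list yields $\boldsymbol{\ell}_m(z(1+H)) = \boldsymbol{\ell}_m + \log(1+H)\,D_1(\boldsymbol{\ell}_m) + \mathrm{h.o.b.}(\boldsymbol{\ell}_1)$, using $D_1(\boldsymbol{\ell}_m) = \boldsymbol{\ell}_1\cdots\boldsymbol{\ell}_{m-1}\boldsymbol{\ell}_m^2$ from \eqref{eq:lien D1 Dm}. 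By superlinearity, this produces $R_0(z(1+H)) = R_0 + \log(1+H)\,D_1(R_0) + \textrm{contractive remainder}$; multiplying by $1+H$ and using the same series identity for $(1+H)\log(1+H)$ gives the $z$-block of $f_0 \circ \varphi_0$ in the same format.

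Equating the two $z$-blocks, cancelling the common summands $\lambda z$, $\lambda zH$, $zR_0$, $zR_0 H$ and dividing by $z$, the single source term $R$ is left isolated, while every other term falls into exactly one of the following four types: (i) $D_1(H)$ multiplied by the explicit coefficient $-\lambda\log\lambda - (1+\log\lambda)R_0 - \lambda\sum_{i\geq 2}\frac{(-1)^i}{i(i-1)}(R_0/\lambda)^i$; (ii) $D_1(R_0)$ multiplied by $H+\sum_{i\geq 2}\frac{(-1)^i}{i(i-1)}H^i$; (iii) $-HR$; or (iv) one of the residual contractions, which one packages as $\lambda\mathcal{C}(H)$, $\mathcal{K}_1(H)$, $-\mathcal{K}_2(H)$, $\mathcal{K}_3(H)$. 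Rearranging yields \eqref{eq:fixed point T0 and S0} with the operators \eqref{eq:S0 and T0}. The main obstacle is the careful bookkeeping: verifying that each residue beyond the explicit $D_1$-leading terms is genuinely a $\tfrac{1}{4}$-contraction — the point being that every such remainder involves either $D_1^2$ or the square $\log(1+H)^2$, both of which raise the order in $\boldsymbol{\ell}_1$ by at least two — and checking that $\mathcal{T}_0$ and $\mathcal{S}_0$ send $\mathcal{B}_{\geq 1}^+$ into $\mathcal{B}_1^+$, which follows from $R \in \mathcal{B}_1^+$ together with the fact that $D_1$ raises the order in $\boldsymbol{\ell}_1$ by $1$.
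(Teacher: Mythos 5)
Your proposal is correct and follows essentially the same strategy as the paper: expand both sides of the prenormalization equation $\varphi_0 \circ f = f_0 \circ \varphi_0 + \mathrm{h.o.b.}(z)$ via Taylor's formula (Proposition~\ref{prop:formal Taylor}), feed the result through the identities \eqref{eq:H applied to lambda z} and \eqref{eq:formule 4 dino}, and split the resulting order-one block in $z$ into the explicit $D_1(H)$- and $D_1(R_0)$-terms (which become $\mathcal{T}_0$) plus the contractive residues (which become $\mathcal{S}_0$). You deviate only in route, not in substance: (i) you expand $(zH)\circ f$ around $\lambda z$ in a single step with increment $z(R_0+R)$, whereas the paper first expands around $f_0 = \lambda z + zR_0$ with increment $zR$ and then re-expands $(zH)^{(i)}(\lambda z + zR_0)$ around $\lambda z$; by the multinomial theorem the two are identical term-by-term, the extra $R$-dependent contributions to the $D_1(H)$-coefficient being contractions since $\mathrm{ord}_{\boldsymbol{\ell}_1}(R)\geq 1$. (ii) On the $f_0\circ\varphi_0$ side you use the substitution identity $\boldsymbol{\ell}_m(z(1+H)) = \boldsymbol{\ell}_m + \log(1+H)\,D_1(\boldsymbol{\ell}_m) + \mathrm{h.o.b.}(\boldsymbol{\ell}_1)$, analogous to \eqref{eq:lmsupn applied to lambda z-1}, while the paper feeds $\sum_{i\geq 1}\frac{(zR_0)^{(i)}}{i!}(zH)^i$ into \eqref{eq:formule 4 dino}; again the same computation in a different order. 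One imprecision worth flagging: you justify that the residues are $\frac14$-contractions by saying they involve ``either $D_1^2$ or the square $\log(1+H)^2$, both of which raise the order in $\boldsymbol{\ell}_1$ by at least two.'' The factor $\log(1+H)^2$ alone need not raise $\mathrm{ord}_{\boldsymbol{\ell}_1}$ (take $H = \boldsymbol{\ell}_2$, for which $\mathrm{ord}_{\boldsymbol{\ell}_1}(\log(1+H))=0$). What actually makes it work is that every residue carries a fixed factor of $\mathrm{ord}_{\boldsymbol{\ell}_1}\geq 2$ coming from $D_1^2(R_0)$ (or the operators $C$, $C_i$ of the formula list), and the $H$-dependence factors through differences such as $\log(1+H_1)-\log(1+H_2)$, whose $\mathrm{ord}_{\boldsymbol{\ell}_1}$ is at least $\mathrm{ord}_{\boldsymbol{\ell}_1}(H_1-H_2)$; the two combine to give the needed $+2$.
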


The precise definition of these contraction operators will emerge
during the proof. 
\begin{proof}[Proof of Lemma~\ref{lem:fixed point T0 S0}]
Setting $\varphi_{0}=z+zH+\mathrm{h.o.b.}(z)$, where $H\in\mathcal{B}_{\ge1}^{+}$,
${f}_{0}=\lambda z+zR_{0}$ and ${f}={f}_{0}+z{R}+\mathrm{h.o.b.}(z)$,
we rewrite the prenormalization equation \eqref{eq:equation conjugacy phi0}
as an equation satisfied by $H$, $R$ and $R_{0}$. To this end,
we use Proposition \ref{prop:formal Taylor} to expand the compositions
in \eqref{eq:equation conjugacy phi0} and compare the leading blocks
(namely the blocks with $\mathrm{ord}_{z}$ equal to $1$) of both
sides of the equation. We obtain: 
\begin{align}
\lambda z{H}-\lambda z{H}(\lambda z)-\sum_{i\geq1} & \frac{\big(z{H}\big)^{(i)}(\lambda z)}{i!}\big(z{R_{0}}\big)^{i}\nonumber \\
 & =\sum_{i\geq1}\frac{\big(z{H}\big)^{(i)}\big(\lambda z+z{R_{0}}\big)}{i!}\big(z{R}\big)^{i}-\sum_{i\geq1}\frac{\big(z{R_{0}}\big)^{(i)}}{i!}\big(z{H}\big)^{i}+z{R}.\label{eq:S0 equals T0}
\end{align}
By \eqref{eq:H applied to lambda z} and \eqref{eq:formule 4 dino},
we have:
\begin{align}
 & \lambda zH-\lambda zH(\lambda z)=-\lambda\log\lambda\cdot zD_{1}(H)-\lambda z\mathcal{C}(H),\label{eq:S0 analysis}\\
 & \sum_{i\geq1}\frac{\big(z{H}\big)^{(i)}(\lambda z)}{i!}\big(z{R_{0}}\big)^{i}\nonumber\\
&\quad \qquad =zHR_{0}+zD_{1}(H)\Big((1+\log\lambda)R_{0}+\lambda\sum_{i\geq2}\frac{(-1)^{i}}{i(i-1)}\left(\frac{R_{0}}{\lambda}\right)^{i}\Big)+z\mathcal{K}_{1}(H),\nonumber 
\end{align}
where $\mathcal{C}$ and $\mathcal{K}_{1}$ are linear $\frac{1}{4}$-contractions
on the space $\big(\mathcal{B}_{\geq1}^{+},d_{1}\big)$. Moreover, since $\mathrm{ord}(R)>(0,\boldsymbol{1}_{k})$, by \eqref{eq:formule 4 dino}, it follows that:
\begin{align}
 & \sum_{i\geq1}\frac{\big(zR_{0}\big)^{(i)}}{i!}\big(zH\big)^{i}=zR_{0}{H}+zD_{1}(R_{0})\Big(H+\sum_{i\geq2}\frac{(-1)^{i}}{i(i-1)}H^{i}\Big)+z\mathcal{K}_{2}(H),\label{eq:T0 analysis}\\
 & \sum_{i\geq1}\frac{\big(zH\big)^{(i)}\big(\lambda z+zR_{0}\big)}{i!}\big(z{R}\big)^{i}=\big(zH\big)'\big(\lambda z+zR_{0}\big)z{R}+\sum_{i\geq2}\frac{\big(zH\big)^{(i)}\big(\lambda z+zR_{0}\big)}{i!}\big(z{R}\big)^{i}=\nonumber \\
 & \qquad\qquad\qquad\qquad\qquad\qquad=zH{R}+z\mathcal{K}_{3}(H),\nonumber 
\end{align}
where the operators $\mathcal{K}_{2}$ and $\mathcal{K}_{3}$ are
$\frac{1}{4}$-contractions on the space $\big(\mathcal{B}_{\geq1}^{+},d_{1}\big)$.

Now, by eliminating $z$, using \eqref{eq:S0 analysis} and \eqref{eq:T0 analysis} in
\eqref{eq:S0 equals T0}, we obtain: 
\begin{align}
\Big(-\lambda\log\lambda-(1+\log\lambda)R_{0}- & \lambda\sum_{i\geq2}\frac{(-1)^{i}}{i(i-1)}\left(\frac{R_{0}}{\lambda}\right)^{i}\Big)D_{1}(H)+\label{eq:S0 equals T0 developed}\\
 & +D_{1}(R_{0})\Big(H+\sum_{i\geq2}\frac{(-1)^{i}}{i(i-1)}H^{i}\Big)-H{R}=\nonumber \\
 & \qquad\qquad=\lambda\mathcal{C}(H)+\mathcal{K}_{1}(H)-\mathcal{K}_{2}(H)+\mathcal{K}_{3}(H)+{R}.\nonumber 
\end{align}
It follows that \eqref{eq:S0 equals T0 developed} is equivalent to
$\mathcal{S}_{0}(H)=\mathcal{T}_{0}(H)$, where $\mathcal S_0$ and $\mathcal T_0$ are defined in \eqref{eq:S0 and T0}. 
\end{proof}
\begin{lem}
\label{lem:hypotheses T0 S0}Let $k\in\mathbb{N}_{\geq 1}$ and ${f}(z)=\lambda z+\mathrm{h.o.t.}\in\mathcal{L}_{k}^{H}$,
with $0<\lambda<1.$ Let ${f}_{0}$ and ${g}$ be as in decomposition
\eqref{eq:decomp f0 plus g}. Let $\mathrm{ord}_{z}({g})=1$. Let
the operators $\mathcal{T}_{0},\,\mathcal{S}_{0}:\mathcal{B}_{\geq1}^{+}\to\mathcal{B}_{1}^{+}$,
$\mathcal{B}_{\geq1}^{+}\subseteq\mathcal{L}_{k}$, be defined as
in \eqref{eq:S0 and T0}. Then: 
\begin{enumerate}[1., font=\textup, nolistsep, leftmargin=0.6cm]
\item $\mathcal{T}_{0}$ is a linear $\frac{1}{2}$-homothety on $\mathcal{B}_{\geq1}^{+}$
with respect to the metric $d_{1}$, 
\item $\mathcal{S}_{0}$ is a $\frac{1}{4}$-contraction on $\mathcal{B}_{\geq1}^{+}$
with respect to the metric $d_{1}$, 
\item $\mathcal{S}_{0}(\mathcal{B}_{\geq1}^{+})\subseteq\mathcal{T}_{0}(\mathcal{B}_{\geq1}^{+})$. 
\end{enumerate}
\end{lem}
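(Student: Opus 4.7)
The plan is to read off each of the three properties from the explicit expressions in \eqref{eq:S0 and T0}, using the ultrametric character of $d_1$ and the contraction properties of $\mathcal{C},\mathcal{K}_{1},\mathcal{K}_{2},\mathcal{K}_{3}$ supplied by Lemma~\ref{lem:fixed point T0 S0}. Part~2 is the quickest: since $R$ is fixed and each of $\mathcal{C},\mathcal{K}_{1},\mathcal{K}_{2},\mathcal{K}_{3}$ is a linear $\frac{1}{4}$-contraction, the difference $\mathcal{S}_{0}(H_{1})-\mathcal{S}_{0}(H_{2})=\lambda\mathcal{C}(H_{1}-H_{2})+\mathcal{K}_{1}(H_{1}-H_{2})-\mathcal{K}_{2}(H_{1}-H_{2})+\mathcal{K}_{3}(H_{1}-H_{2})$ is a finite sum of transseries whose $\boldsymbol{\ell}_{1}$-orders each exceed $\mathrm{ord}_{\boldsymbol{\ell}_{1}}(H_{1}-H_{2})+2$. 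By the ultrametric property of $d_{1}$ the sum satisfies the same bound, giving the desired $\frac{1}{4}$-contraction estimate.

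For Part~1, I would split $\mathcal{T}_{0}=\mathcal{A}+\mathcal{N}$, where $\mathcal{A}(H):=A\cdot D_{1}(H)+D_{1}(R_{0})\cdot H-HR$ is the linear part (with $A$ denoting the bracketed coefficient of $D_{1}(H)$ in \eqref{eq:S0 and T0}) and $\mathcal{N}(H):=D_{1}(R_{0})\sum_{i\geq 2}\frac{(-1)^{i}}{i(i-1)}H^{i}$ its nonlinear part. Since $\mathrm{ord}_{\boldsymbol{\ell}_{1}}(D_{1}(R_{0}))\geq 2$, the factorization $H_{1}^{i}-H_{2}^{i}=(H_{1}-H_{2})Q_{i}(H_{1},H_{2})$ shows that $\mathrm{ord}_{\boldsymbol{\ell}_{1}}(\mathcal{N}(H_{1})-\mathcal{N}(H_{2}))$ strictly exceeds $\mathrm{ord}_{\boldsymbol{\ell}_{1}}(H_{1}-H_{2})+1$, so $\mathcal{N}$ is negligible. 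It remains to check that $\mathcal{A}$ is a linear $\frac{1}{2}$-homothety. Since $R_{0}\in\mathcal{B}_{\geq1}^{+}$ forces $\mathrm{ord}_{\boldsymbol{\ell}_{1}}(R_{0})\geq 1$, we get $A=-\lambda\log\lambda+\mathrm{h.o.b.}(\boldsymbol{\ell}_{1})$ with $-\lambda\log\lambda\ne 0$ (as $0<\lambda<1$). Combined with the fact that $D_{1}$ raises $\boldsymbol{\ell}_{1}$-order by exactly~$1$ on $\mathcal{B}_{1}$ (Remark~\ref{rem:Dm contraction}), the dominant contribution to $\mathcal{A}(H)$ is $-\lambda\log\lambda\cdot D_{1}(H)$, of $\boldsymbol{\ell}_{1}$-order exactly $\mathrm{ord}_{\boldsymbol{\ell}_{1}}(H)+1$. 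The remaining summand $D_{1}(R_{0})\cdot H$ has $\boldsymbol{\ell}_{1}$-order at least $\mathrm{ord}_{\boldsymbol{\ell}_{1}}(H)+2$, and $-HR$ has $\boldsymbol{\ell}_{1}$-order $\geq\mathrm{ord}_{\boldsymbol{\ell}_{1}}(H)+\mathrm{ord}_{\boldsymbol{\ell}_{1}}(R)$; since $\mathrm{ord}(R)>\boldsymbol{1}_{k}$ forces $\mathrm{ord}_{\boldsymbol{\ell}_{1}}(R)\geq 1$, a brief $\mathcal{B}_{2}$-coefficient comparison in the borderline case $\mathrm{ord}_{\boldsymbol{\ell}_{1}}(R)=1$ is needed to rule out cancellation of the leading $\boldsymbol{\ell}_{1}$-coefficient.

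For Part~3, the strategy mirrors the surjectivity argument in part~3 of Lemma~\ref{lem:properties Sf Tf }. Given $Y\in\mathcal{B}_{1}^{+}$, the equation $\mathcal{T}_{0}(H)=Y$ is rewritten, after dividing by the invertible factor $A$ and formally antidifferentiating $D_{1}$ (which is a bijection from the positive-$\boldsymbol{\ell}_{1}$-order part of $\mathcal{B}_{1}$ onto its image), as a fixed-point equation $H=\Psi(H)$ on $(\mathcal{B}_{\geq 1}^{+},d_{1})$. The same estimates feeding Parts~1 and~2 make $\Psi$ a contraction, so Banach's theorem produces a unique $H$ with $\mathcal{T}_{0}(H)=Y$, yielding $\mathcal{T}_{0}(\mathcal{B}_{\geq 1}^{+})=\mathcal{B}_{1}^{+}\supseteq\mathcal{S}_{0}(\mathcal{B}_{\geq 1}^{+})$. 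The main obstacle I anticipate is Part~1, specifically the $\mathcal{B}_{2}$-level bookkeeping in the borderline regimes $\mathrm{ord}_{\boldsymbol{\ell}_{1}}(H)=0$ or $\mathrm{ord}_{\boldsymbol{\ell}_{1}}(R)=1$, where several summands a priori share the same $\boldsymbol{\ell}_{1}$-order and one must track $\mathcal{B}_{2}$-coefficients to exclude accidental cancellations that would violate the exact-homothety property.
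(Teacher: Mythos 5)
Parts 1 and 2 are broadly on the right track, though Part 1 contains a false intermediate claim. You assert $\mathrm{ord}_{\boldsymbol{\ell}_1}(D_1(R_0))\geq 2$; but $R_0\in\mathcal{B}_{\geq 1}^+$ only forces $\mathrm{ord}(R_0)>\boldsymbol{0}_k$, and if $R_0$ has no $\boldsymbol{\ell}_1$-component (e.g.\ $R_0=\boldsymbol{\ell}_2$, so $D_1(R_0)=\boldsymbol{\ell}_1\boldsymbol{\ell}_2^2$) then $\mathrm{ord}_{\boldsymbol{\ell}_1}(D_1(R_0))=1$, so $D_1(R_0)\cdot H$ shares the $\boldsymbol{\ell}_1$-order of the putatively dominant term $-\lambda\log\lambda\cdot D_1(H)$. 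You flag this borderline regime but do not close it. The efficient fix is the one the paper uses: compare the full lexicographic order $\mathrm{ord}$, not just $\mathrm{ord}_{\boldsymbol{\ell}_1}$. One checks directly that every non-dominant summand of $\mathcal{T}_0(H)$ has $\mathrm{ord}$ strictly greater than $\mathrm{ord}(D_1(H))$ (using $\mathrm{ord}(D_1(R_0))>\boldsymbol{1}_k\geq\mathrm{ord}(D_1(H))-\mathrm{ord}(H)$ and $\mathrm{ord}(R)>\boldsymbol{1}_k$), which gives $\mathrm{ord}(\mathcal{T}_0(H))=\mathrm{ord}(D_1(H))$ and hence the exact-homothety identity on $\mathrm{ord}_{\boldsymbol{\ell}_1}$ with no cancellation analysis at all. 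In Part 2, note that $\mathcal{K}_2,\mathcal{K}_3$ are stated only as $\frac{1}{4}$-contractions, not linear ones, so you cannot write $\mathcal{K}_i(H_1)-\mathcal{K}_i(H_2)=\mathcal{K}_i(H_1-H_2)$; but the ultrametric estimate $d_1(\mathcal{K}_i(H_1),\mathcal{K}_i(H_2))\leq\tfrac14 d_1(H_1,H_2)$ suffices, so this is only a phrasing error.

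The genuine gap is in Part 3. You propose to "mirror the surjectivity argument in part 3 of Lemma~\ref{lem:properties Sf Tf }": divide by $A$, antidifferentiate $D_1$, and obtain a contraction $\Psi$ on $(\mathcal{B}_{\geq 1}^+,d_1)$. This breaks for exactly the reason the paper separates case (a) from case (b). In Lemma~\ref{lem:properties Sf Tf } the operator $\mathcal{S}_1$ of \eqref{eq:S1} is a contraction because the block under consideration has $z$-exponent $\gamma>1$, giving a nonzero constant $\lambda-\lambda^\gamma$ in the denominator that absorbs the $D_1(H)$ term. Here the prenormalization lives in the $\gamma=1$ block, that constant vanishes, and $D_1(H)$ becomes the \emph{leading} (rather than a subordinate) term. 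After isolating $D_1(H)$ and applying $D_1^{-1}$, the remaining coefficient terms $D_1(R_0)\cdot H$ and $HR$ have $\boldsymbol{\ell}_1$-order only $\geq\mathrm{ord}_{\boldsymbol{\ell}_1}(H)+1$ in general; composing with $D_1^{-1}$ (which subtracts $1$ from $\boldsymbol{\ell}_1$-order) leaves an operator that does not strictly decrease $d_1$-distance, so Banach's theorem does not apply. Moreover, "formally antidifferentiating $D_1$" is itself nontrivial: $D_1^{-1}$ involves integrating in $\boldsymbol{\ell}_1$, which can produce terms in $\boldsymbol{\ell}_{m}$ for $m$ larger than those already present (see $\int z^{-1}\boldsymbol{\ell}_1\cdots\boldsymbol{\ell}_k\,dz=-\boldsymbol{\ell}_{k+1}^{-1}$), and whether the right-hand side lies in $D_1(\mathcal{B}_{\geq 1}^+)$ is precisely what needs to be proved. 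The paper resolves both difficulties with Lemmas~\ref{lem:primitive} and~\ref{lem:exponential} (closure of block spaces under integration and exponentiation), Lemma~\ref{lem:differential equation Dm} (a genuine contraction, but only on the subspace $\mathcal{B}_m^+$ where the right-hand side has $\boldsymbol{\ell}_m$-order $\geq 2$), and Lemma~\ref{lem:differential equation D1}, which iterates Lemma~\ref{lem:differential equation Dm} stepwise through the direct summands $\mathcal{B}_k^+,\mathcal{B}_{k-1}^+,\ldots,\mathcal{B}_1^+$. None of this is captured by "the same estimates feeding Parts~1 and~2 make $\Psi$ a contraction."
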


\noindent The proof of the two first parts of Lemma \ref{lem:hypotheses T0 S0}
follows easily by observing the orders of the monomials involved in
the expansion of $\mathcal{S}_{0}\left(H\right)$ and $\mathcal{T}_{0}\left(H\right)$,
for $H\in\mathcal{B}_{\ge1}^{+}$. Note that $\mathrm{ord}(\mathcal T_0(H))=\mathrm{ord}(D_1(H)),\ H\in\mathcal B_1$. The third part is however more
subtle, and requires the resolution of some \emph{non linear differential
equations} in $\mathcal{B}_{1}$. Interestingly, the method we use
in this resolution is another application of our version of the fixed
point theorem (Proposition \ref{prop:krasno fixed point}). Hence
we devote Subsection \ref{subsec:differential equations on blocks}
to the description of this method. In Subsection \ref{subsec:end case b}
we prove Lemma \ref{lem:hypotheses T0 S0} and finish the proof of
case (b) of the Main Theorem.

\subsubsection{\label{subsec:differential equations on blocks}Differential equations
in spaces of blocks}

In this subsection we show how to use Proposition \ref{prop:krasno fixed point}
to solve various differential equations in spaces of blocks. We begin
by the following technical result. Recall the definition of \emph{$\mathcal{L}_{k}^{\infty}$
}in Subsection \eqref{subsec:Notation}.
\begin{lem}[\emph{Stability of spaces $\mathcal{L}_{k}^{\infty}$ under integration}]
\label{lem:primitive}\,$\ $
\begin{enumerate}[i), font=\textup, nolistsep,leftmargin=0.6cm]
\item $\int z^{-1}\boldsymbol{\ell}_{1}\cdots\boldsymbol{\ell}_{k}\ dz=-\boldsymbol{\ell}_{k+1}^{-1},\ k\in\mathbb{N}$, 
\item For $(\delta,m_{1},\ldots,m_{k})\in\mathbb{R}\times\mathbb{Z}^{k}$,
$(\delta,m_{1},\ldots,m_{k})\neq(-1,1,\ldots,1)$, 
\[
\int z^{\delta}\boldsymbol{\ell}_{1}^{m_{1}}\cdots\boldsymbol{\ell}_{k}^{m_{k}}\,dz\in\mathcal{L}_{k}^{\infty}.
\]
\end{enumerate}
\end{lem}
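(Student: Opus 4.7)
Part (i) is proven by induction on $k$. Using $\boldsymbol{\ell}_{k+1}=\boldsymbol{\ell}_1\circ\boldsymbol{\ell}_k$ and $\boldsymbol{\ell}_1'(w)=\boldsymbol{\ell}_1(w)^2/w$, the chain rule yields $\boldsymbol{\ell}_{k+1}'=\boldsymbol{\ell}_{k+1}^2\,\boldsymbol{\ell}_k'/\boldsymbol{\ell}_k$, from which an easy induction gives $\boldsymbol{\ell}_k'=\boldsymbol{\ell}_1\cdots\boldsymbol{\ell}_{k-1}\boldsymbol{\ell}_k^2/z$, hence $\boldsymbol{\ell}_k'/\boldsymbol{\ell}_k=z^{-1}\boldsymbol{\ell}_1\cdots\boldsymbol{\ell}_k$. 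Combined with the formal identity $-\boldsymbol{\ell}_{k+1}^{-1}=\ln\boldsymbol{\ell}_k$ (with the convention $\boldsymbol{\ell}_0:=z$), this gives $(-\boldsymbol{\ell}_{k+1}^{-1})'=z^{-1}\boldsymbol{\ell}_1\cdots\boldsymbol{\ell}_k$, which proves (i).

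For part (ii), the plan is to seek an antiderivative $F$ of $M:=z^\delta\boldsymbol{\ell}_1^{m_1}\cdots\boldsymbol{\ell}_k^{m_k}$ via a well-chosen block ansatz, which reduces the problem to a linear equation in a space of blocks solvable by Banach's fixed point theorem, in the spirit of the proof of surjectivity of $\mathcal{T}_f$ in Lemma~\ref{lem:properties Sf Tf }. When $\delta\neq-1$, I take $F:=z^{\delta+1}H$ with $H\in\mathcal{B}_1$; formula~\eqref{eq:i-th derivative block} (with $i=1,\ \mu=\delta+1$) then rewrites $F'=M$ as
\[
(\delta+1)H+D_1(H)=\boldsymbol{\ell}_1^{m_1}\cdots\boldsymbol{\ell}_k^{m_k}.
\]
Since $\delta+1\neq 0$ and $D_1$ is a $\tfrac{1}{2}$-contraction on the complete metric space $(\mathcal{B}_1,d_1)$ (Remark~\ref{rem:Dm contraction} and Proposition~\ref{prop:complete spaces}), the corresponding Picard operator is itself a $\tfrac{1}{2}$-contraction and produces a unique fixed point $H^*\in\mathcal{B}_1$, so $F=z^{\delta+1}H^*\in\mathcal{L}_k^\infty$.

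When $\delta=-1$ and $(m_1,\ldots,m_k)\neq(1,\ldots,1)$, let $j$ be the smallest index with $m_j\neq 1$, so that $m_1=\cdots=m_{j-1}=1$. I set $F:=\boldsymbol{\ell}_j^{m_j-1}H$, with $H\in\mathcal{B}_{j+1}$ if $j<k$ and $H\in\mathbb{R}$ if $j=k$. Using $\boldsymbol{\ell}_j'=\boldsymbol{\ell}_1\cdots\boldsymbol{\ell}_{j-1}\boldsymbol{\ell}_j^2/z$ together with \eqref{eq:lien D1 Dm}, a direct computation rewrites $F'=M$ as
\[
(m_j-1)H+D_{j+1}(H)=\boldsymbol{\ell}_{j+1}^{m_{j+1}}\cdots\boldsymbol{\ell}_k^{m_k}.
\]
Since $m_j-1\neq 0$ and $D_{j+1}$ is a $\tfrac{1}{2}$-contraction on the complete space $(\mathcal{B}_{j+1},d_{j+1})$, Banach's theorem again yields a unique $H^*\in\mathcal{B}_{j+1}$, so $F\in\mathcal{L}_k^\infty$.

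The main delicate point will be to confirm that the formal limits produced by the Picard iteration genuinely lie in the block spaces $\mathcal{B}_1$ (resp.~$\mathcal{B}_{j+1}$), i.e., have well-ordered support. This follows from the completeness of these spaces in the respective metrics together with the fact that each iteration strictly raises the relevant $\boldsymbol{\ell}$-order by $1$, so that only finitely many iterates contribute to any given monomial — an application of Neumann's lemma.
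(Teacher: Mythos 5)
Your proof is correct and follows essentially the same route as the paper's: part (i) is the same direct differentiation, and in part (ii) your ansatz $F=z^{\delta+1}H$ (resp.\ $F=\boldsymbol{\ell}_j^{m_j-1}H$) together with Banach's fixed point theorem for the contraction built from $D_1$ (resp.\ $D_{j+1}$) is precisely the paper's Neumann series $G=\sum_{i\ge 0}\bigl(\tfrac{-1}{\delta+1}\bigr)^{i}D_1^{i}(\cdot)$ (resp.\ $\sum_{i\ge 0}\tfrac{(-1)^i}{(m_r-1)^i}D_{r+1}^{i}(K)$) written as a fixed-point limit rather than explicitly. The only superfluous part is your final paragraph: well-orderedness of the support of the limit is already guaranteed by the completeness of $(\mathcal{B}_1,d_1)$ and $(\mathcal{B}_{j+1},d_{j+1})$ from Proposition~\ref{prop:complete spaces}, so no separate appeal to Neumann's lemma is needed at that stage.
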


\begin{proof}
We prove the first part by the direct computation of the derivative
of $-\boldsymbol{\ell}_{k+1}^{-1}$.

For the second part, when $\delta\ne-1$, first we notice that the
series 
\[
G:=\sum_{i=0}^{\infty}\big(\frac{-1}{\delta+1}\big)^{i}D_{1}^{i}(\boldsymbol{\ell}_{1}^{m_{1}}\cdots\boldsymbol{\ell}_{1}^{m_{k}})
\]
 converges in the complete space $\left(\mathcal{B}_{1},d_{1}\right)$,
where $D_{1}$ is the derivation defined in \eqref{eq:derivation Dm}.
Indeed, $D_{1}$ is a $\frac{1}{2}$-contraction on $\mathcal{B}_{1}$
(see Remark \ref{rem:Dm contraction}). Secondly, using the first line of \eqref{eq:i-th derivative block} we prove that $\frac{z^{\delta+1}}{\delta+1}G$
is an antiderivative of $z^{\delta}\boldsymbol{\ell}_{1}^{m_{1}}\cdots\boldsymbol{\ell}_{k}^{m_{k}}$,
by computing its derivative term by term.

Similarly, if $\delta=-1$, suppose that $m_{1}=m_{2}=\cdots=m_{r-1}=1$
and $m_{r}\ne1$, for some $r\in\left\{ 1,\ldots,k\right\} $, and
let $K:=\boldsymbol{\ell}_{r+1}^{m_{r+1}}\cdots\boldsymbol{\ell}_{k}^{m_{k}}$
if $r<k$ (and $K=1$ if $r=k$). Since $D_{r+1}$ is a $\frac{1}{2}$-contraction
on the complete space $\left(\mathcal{B}_{r+1},d_{r+1}\right)$ (by
Remark \ref{rem:Dm contraction}), the series $\sum_{i=0}^{\infty}\frac{\left(-1\right)^{i}D_{r+1}^{i}\left(K\right)}{(m_{r}-1)^{i}}$
converges in this space. A direct computation, using \eqref{eq:lien D1 Dm}, shows that the derivative
of 
\begin{equation}\label{eq:cham}
\frac{\boldsymbol{\ell}_{r}^{m_{r}-1}}{m_{r}-1}\sum_{i=0}^{\infty}\frac{\left(-1\right)^{i}D_{r+1}^{i}\left(K\right)}{\left(m_{r}-1\right)^{i}}
\end{equation}
is $z^{-1}\boldsymbol{\ell}_{1}\cdots\boldsymbol{\ell}_{r-1}\boldsymbol\ell_r^{m_{r}}K$. Note that \eqref{eq:cham} remains in $\mathcal L_k^\infty$.
\end{proof}
In view of solving linear equations in spaces of transseries, we need
an exponential operator acting on them. Its existence is guaranteed
by the following classic lemma, the proof of which we omit (see for
example \cite{vdd_macintyre_marker:log-exp_series}).
\begin{lem}[Formal exponential in $\mathcal{L}_{k}$]
\label{lem:exponential} The \emph{formal exponential operator} $\exp:\{{f}\in\mathcal{L}_{k}:\mathrm{ord}({f})\geq\mathbf{0}_{k+1}\}\to\{{f}\in\mathcal{L}_{k}:\mathrm{ord}({f})\geq\mathbf{0}_{k+1}\}$,
defined by: 
\begin{equation}
\exp\left(c+{g}\right):=\exp c\cdot\Big(1+\sum_{i\geq1}\frac{{g}^{i}}{i!}\Big),\ \ \,c\in\mathbb{R},\ {g}\in\mathcal{L}_{k},\,\mathrm{ord}({g})>\mathbf{0}_{k+1},\label{eq:exp}
\end{equation}
converges in the weak topology in $\mathcal{L}_{k}$. Moreover, if
$ord_{z}({g})>0$, then the series \eqref{eq:exp} converges in the
(finer) valuation topology. 
\end{lem}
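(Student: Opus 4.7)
The plan is to split the claim into the reduction to $\exp(g):=1+\sum_{i\ge 1}\frac{g^i}{i!}$ for $g$ with $\mathrm{ord}(g)>\mathbf{0}_{k+1}$, handle weak convergence via summability, and then derive valuation convergence as a refinement under the extra hypothesis. Since $\exp c\in\mathbb{R}_{>0}$ is a scalar, multiplying a convergent series by it preserves convergence in either topology, so the essential task is to prove that the series $\sum_{i\ge 1}\frac{g^i}{i!}$ converges in the weak topology and, in the valuation topology when $\mathrm{ord}_z(g)>0$.

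For the weak topology, I would apply the summability criterion recalled at the end of Subsection~\ref{subsec:product topology}: it suffices to show that the family $\big(\tfrac{g^i}{i!}\big)_{i\in\mathbb{N}}$ is summable, i.e.\ that $\bigcup_{i\in\mathbb{N}}\mathrm{Supp}(g^i)$ is well-ordered in $\mathbb{R}_{\ge 0}\times\mathbb{Z}^k$ and that for every $(\alpha,n_1,\ldots,n_k)$ only finitely many indices $i$ satisfy $(\alpha,n_1,\ldots,n_k)\in\mathrm{Supp}(g^i)$. Both assertions are exactly the content of Neumann's Lemma applied to the additive submonoid $S$ of the lexicographically ordered group $\mathbb{R}\times\mathbb{Z}^k$ generated by $\mathrm{Supp}(g)$: because $\mathrm{ord}(g)>\mathbf{0}_{k+1}$, every element of $\mathrm{Supp}(g)$ is strictly positive in this ordering, so $S$ is well-ordered and each of its elements admits only finitely many decompositions as a sum of generators. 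This is precisely the input already cited in Proposition~\ref{prop:formal Taylor}, so I would invoke it verbatim. Summability then yields both the weak convergence of $\sum_{i\ge 1}\frac{g^i}{i!}$ and the fact that the resulting transseries lies in $\mathcal{L}_k$ with support contained in $S\cup\{\mathbf{0}_{k+1}\}\subseteq\mathbb{R}_{\ge 0}\times\mathbb{Z}^k$, so the image belongs to $\{f\in\mathcal{L}_k:\mathrm{ord}(f)\ge\mathbf{0}_{k+1}\}$.

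For the valuation topology under the stronger hypothesis $\mathrm{ord}_z(g)>0$, the argument is a one-liner using the criterion stated at the end of Subsection~\ref{subsec:valuation topology}: since $\mathrm{ord}_z(g^i)\ge i\cdot\mathrm{ord}_z(g)\to+\infty$, the $z$-orders of the terms of the series diverge and so the series converges in $d_z$.

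The main (and essentially the only) obstacle is checking the summability of $(g^i)_i$, and specifically the finite-to-one property: that in $(\mathbb{R}\times\mathbb{Z}^k,\le)$ a tuple $(\alpha,n_1,\ldots,n_k)$ only has finitely many representations as an ordered sum of positive tuples drawn from $\mathrm{Supp}(g)$. This is not obvious in the purely logarithmic part, where the $z$-coordinate can be $0$ while the $\boldsymbol{\ell}_j$ exponents still decrease, but it is exactly the content of Neumann's Lemma for well-ordered subsets of ordered abelian groups; since the paper has already imported this machinery from \cite{vdd_macintyre_marker:log-exp_series,neumann:ordered_division_rings}, I would simply cite it rather than reprove it.
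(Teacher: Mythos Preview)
Your proposal is correct. The paper actually omits the proof of this lemma entirely, simply labeling it ``classic'' and referring to \cite{vdd_macintyre_marker:log-exp_series}; your argument is precisely the standard unpacking of that citation via Neumann's Lemma for the weak-topology claim and the $\mathrm{ord}_z$ criterion from Subsection~\ref{subsec:valuation topology} for the valuation-topology claim, so there is nothing to compare.
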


We consider now a first type of differential equation in the spaces
of blocks involved in the proof of case (b) of the Main Theorem. Lemma \ref{lem:differential equation Dm} is used for proving Lemma \ref{lem:differential equation D1}, which is the main result of Subsection \ref{subsec:differential equations on blocks}.
\begin{lem}
\label{lem:differential equation Dm} Let ${R}_{1},\,{R}_{2},\,R\in\mathcal{B}_{m}^{+}\subseteq\mathcal{L}_{k}$,
$1\leq m\leq k$, $k\in \mathbb{N}_{\geq 1}$, such that 
\[
\mathrm{ord}\big(z\boldsymbol{\ell}_{1}\cdots\boldsymbol{\ell}_{m-1}{R}_{1}\big)>\mathbf{1}_{k+1},\ \mathrm{ord}\big(z\boldsymbol{\ell}_{1}\cdots\boldsymbol{\ell}_{m-1}{R}_{2}\big)>\mathbf{1}_{k+1},
\]
and $\mathrm{ord}_{\boldsymbol{\ell}_{m}}(R)\geq2$. Let $h\in z^{2}\mathcal{B}_{\geq m+1}^{+}\left[\left[z\right]\right]$,
\emph{i.e.} $h$ is a formal series in $z$ with coefficients in $\mathcal{B}_{\ge m+1}^{+}$
such that $h\left(0\right)=h'\left(0\right)=0$. Then the differential
equation 
\begin{equation}
D_{m}(Q)+R_{1}\cdot Q+R_{2}\cdot h(Q)=R\label{eq:equadiff Dm}
\end{equation}
admits a unique solution in $\mathcal{B}_{m}^{+}$.
\end{lem}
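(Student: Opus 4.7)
My plan is to convert \eqref{eq:equadiff Dm} into a fixed-point problem in the form of Proposition~\ref{prop:krasno fixed point}, applied on the complete metric space $(\mathcal{B}_m^+, d_m)$ (completeness being part of Proposition~\ref{prop:complete spaces}). The key idea is to isolate from $R_1$ the part which interferes with the leading $\boldsymbol{\ell}_m$-order of $D_m$. Writing $R_1 = r_1 \boldsymbol{\ell}_m + \widetilde{R}_1$, where $r_1 \in \mathcal{B}_{m+1}$ denotes the $\boldsymbol{\ell}_m^1$-coefficient of $R_1$ and $\mathrm{ord}_{\boldsymbol{\ell}_m}(\widetilde{R}_1) \geq 2$, the hypothesis $\mathrm{ord}(z\boldsymbol{\ell}_1\cdots\boldsymbol{\ell}_{m-1} R_1) > \mathbf{1}_{k+1}$ forces either $r_1 = 0$ or $\mathrm{ord}(r_1) > \mathbf{1}_{k-m}$ in $\mathcal{B}_{m+1}$; in both situations $\mathrm{ord}_{\boldsymbol{\ell}_{m+1}}(r_1) \geq 1$, so for every $n \geq 1$ the element $n+r_1 \in \mathcal{B}_{m+1}$ is invertible, its inverse being the Neumann series $n^{-1}\sum_{i\geq 0}(-r_1/n)^i$, convergent in $(\mathcal{B}_{m+1}, d_{m+1})$.

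I then define
\[
T(Q) := D_m(Q) + r_1 \boldsymbol{\ell}_m \cdot Q, \qquad S(Q) := R - R_2 \cdot h(Q) - \widetilde{R}_1 \cdot Q,
\]
so that $T(Q)=S(Q)$ is equivalent to \eqref{eq:equadiff Dm}. A direct coefficient analysis, using that $D_m$ raises $\mathrm{ord}_{\boldsymbol{\ell}_m}$ by exactly $1$ and that $n+r_1 \neq 0$ in $\mathcal{B}_{m+1}$, shows that $T$ is an $\mathbb{R}$-linear $\tfrac{1}{2}$-homothety on $(\mathcal{B}_m^+, d_m)$: if $Q$ has leading $\boldsymbol{\ell}_m$-term $q_{n_0}\boldsymbol{\ell}_m^{n_0}$ with $n_0 \geq 1$, then $T(Q)$ has leading $\boldsymbol{\ell}_m^{n_0+1}$-coefficient $(n_0+r_1)q_{n_0} \neq 0$. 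On the other hand, since each $a_i \in \mathcal{B}_{\geq m+1}^+$ carries no $\boldsymbol{\ell}_m$-factor and $\mathrm{ord}_{\boldsymbol{\ell}_m}(Q_1^{i-1}+\cdots+Q_2^{i-1}) \geq i-1 \geq 1$ for $i \geq 2$, while $\mathrm{ord}_{\boldsymbol{\ell}_m}(R_2) \geq 1$ and $\mathrm{ord}_{\boldsymbol{\ell}_m}(\widetilde{R}_1) \geq 2$, a routine estimate shows that $S$ is a $\tfrac{1}{4}$-Lipschitz map on $(\mathcal{B}_m^+, d_m)$.

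The main obstacle is the inclusion $S(\mathcal{B}_m^+) \subseteq T(\mathcal{B}_m^+)$. Order estimates using $\mathrm{ord}_{\boldsymbol{\ell}_m}(R) \geq 2$ yield $S(\mathcal{B}_m^+) \subseteq \{P \in \mathcal{B}_m : \mathrm{ord}_{\boldsymbol{\ell}_m}(P) \geq 2\}$, so it suffices to invert $T$ on this last set. Given such $P = \sum_{n \geq 2} p_n \boldsymbol{\ell}_m^n$ with $p_n \in \mathcal{B}_{m+1}$, identifying $\boldsymbol{\ell}_m^{n+1}$-coefficients in $T(Q) = P$ yields the explicit, non-recursive formula
\[
q_n := (n+r_1)^{-1} p_{n+1} \in \mathcal{B}_{m+1}, \qquad n \geq 1.
\]
The subtle point is then to certify that $Q := \sum_{n\geq 1} q_n \boldsymbol{\ell}_m^n$ is a bona fide element of $\mathcal{B}_m^+$, which reduces to showing that $\mathrm{Supp}(Q) \subseteq \{1,2,\ldots\} \times \mathbb{Z}^{k-m}$ is well-ordered in lex: any nonempty subset has a minimal first coordinate $n^*$ by the well-ordering of $\mathbb{N}$, and inside the slice $\{n^*\}\times \mathrm{Supp}(q_{n^*})$ a minimum exists because $\mathrm{Supp}(q_{n^*}) \subseteq \mathbb{Z}^{k-m}$ is well-ordered.

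With all four hypotheses of Proposition~\ref{prop:krasno fixed point} now satisfied (with $\lambda = \tfrac{1}{2}$ and $\mu = \tfrac{1}{4}$), one obtains the unique fixed point $Q \in \mathcal{B}_m^+$ solving \eqref{eq:equadiff Dm}.
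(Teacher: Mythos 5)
Your proof is correct, and it takes a genuinely different route from the paper for the crucial inclusion $\mathcal{S}(\mathcal{B}_m^+)\subseteq\mathcal{T}(\mathcal{B}_m^+)$. The paper keeps the full linear term $R_1\cdot Q$ on the homothety side, so that $\mathcal{T}(Q)=D_m(Q)+R_1 Q$; to invert it, the paper writes the explicit integrating-factor formula $Q=\exp\big(-\int R_1\boldsymbol{\ell}_m^{-2}d\boldsymbol{\ell}_m\big)\int M\boldsymbol{\ell}_m^{-2}\exp\big(\int R_1\boldsymbol{\ell}_m^{-2}d\boldsymbol{\ell}_m\big)d\boldsymbol{\ell}_m$ and then invokes Lemma~\ref{lem:primitive} (integration in $\mathcal{L}_k^\infty$) and Lemma~\ref{lem:exponential} (formal exponential) to certify that $Q\in\mathcal{B}_m^+$. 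You instead split $R_1=r_1\boldsymbol{\ell}_m+\widetilde{R}_1$ and move the genuinely higher-order part $\widetilde{R}_1 Q$ onto the Lipschitz side; the resulting $T(Q)=D_m(Q)+r_1\boldsymbol{\ell}_m Q$ is ``diagonal'' in the $\boldsymbol{\ell}_m$-grading, so its inverse is given coefficientwise by $q_n=(n+r_1)^{-1}p_{n+1}$, a non-recursive formula requiring only the Neumann-series inversion of $n+r_1$ in $\mathcal{B}_{m+1}$. Both arguments are valid: yours trades the formal exponential and the integration lemma for a slightly finer decomposition of the operator, and has the benefit of being more concrete (explicit coefficients, no auxiliary machinery); the paper's is closer to the classical ODE picture. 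One cosmetic remark: you write $a_i$ for the coefficients of $h$ in one place where the lemma's notation is $H_n$, and the completeness of the operator splitting (i.e.\ that $S,T$ indeed send $\mathcal{B}_m^+$ into $\mathcal{B}_m^+$ and that $h(Q)$ is summable) is left implicit, but these are routine and match the paper's own level of detail.
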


\begin{proof}
Let ${h}=\sum_{n\geq2}{H}_{n}z^{n}$, with ${H}_{n}\in\mathcal{B}_{\geq m+1}^{+}$,
$n\geq2$. Define the operators $\mathcal{S}:\mathcal{B}_{m}^{+}\to\mathcal{B}_{m}^{+}$
and $\mathcal{T}:\mathcal{B}_{m}^{+}\to\mathcal{B}_{m}^{+}$ by 
\begin{align}
 & \mathcal{S}(Q):={R}-{R}_{2}\cdot\sum_{n\geq2}H_{n}\cdot Q^{n},\nonumber \\
 & \mathcal{T}(Q):=D_{m}(Q)+{R}_{1}\cdot Q,\ \ Q\in\mathcal{B}_{m}^{+}.\label{eq:T S equadiff Dm}
\end{align}
Note that $\mathcal{T}$ is a linear operator, while $\mathcal{S}$
is not. Now \eqref{eq:equadiff Dm} is equivalent to the fixed-point
equation 
\begin{equation}
\mathcal{S}(Q)=\mathcal{T}(Q),\ \ Q\in\mathcal{B}_{m}^{+}.\label{eq:fixed point equadiff Dm}
\end{equation}
By \eqref{eq:T S equadiff Dm} it is easy to see that $\mathcal{S}$
is a $\frac{1}{4}$-contraction and $\mathcal{T}$ is a $\frac{1}{2}$-homothety
on the space $\big(\mathcal{B}_{m}^{+},d_{m}\big)$.

Let us now prove that $\mathcal{S}(\mathcal{B}_{m}^{+})\subseteq\mathcal{T}(\mathcal{B}_{m}^{+})$.
Let ${M}\in\mathcal{S}(\mathcal{B}_{m}^{+})$. Note that, since $\mathrm{ord}_{\boldsymbol{\ell_{m}}}(R)\geq2$,
${M}\in\mathcal{S}(\mathcal{B}_{m}^{+})$ implies that $\mathrm{ord}_{\boldsymbol{\ell}_{m}}({M})\geq2$.
We prove now that ${M}\in\mathcal{T}(\mathcal{B}_{m}^{+}).$ That
is, by \eqref{eq:T S equadiff Dm}, that the equation 
\[
D_{m}(Q)+R_{1}\cdot Q=M
\]
has a solution $Q\in\mathcal{B}_{m}^{+}$. A direct computation shows
that this \emph{linear first-order} differential equation admits a
unique solution given by: 
\begin{equation}
Q:=\mathrm{exp}\left(-\int\frac{R_{1}}{\boldsymbol{\ell}_{m}^{2}}d\boldsymbol{\ell}_{m}\right)\cdot\int\frac{M}{\boldsymbol{\ell}_{m}^{2}}\cdot\mathrm{exp}\left(\int\frac{R_{1}}{\boldsymbol{\ell}_{m}^{2}}d\boldsymbol{\ell}_{m}\right)d\boldsymbol{\ell}_{m}.\label{eq:solution linear equadiff Dm}
\end{equation}
Here, $\mathrm{exp}$ denotes the formal exponential operator given
in Lemma~\ref{lem:exponential}. By Lemma~\ref{lem:primitive} (with
$\boldsymbol{\ell}_{m}$ playing the role of $z$ in Lemma \ref{lem:primitive}),
since $\mathrm{ord}\big(z\boldsymbol{\ell}_{1}\cdots\boldsymbol{\ell}_{m-1}{R}_{1}\big)>\mathbf{1}_{k+1}$,
it follows that $\int\frac{{R}_{1}}{\boldsymbol{\ell}_{m}^{2}}d\boldsymbol{\ell}_{m}\in\mathcal{B}_{\geq m}^{+}$.
For the same reasons, analyzing \eqref{eq:solution linear equadiff Dm} and since $\mathrm{ord}_{\boldsymbol{\ell}_{m}}\big(\frac{{M}}{\boldsymbol{\ell}_{m}^{2}}\big)\geq0$,
we obtain that $Q\in\mathcal{B}_{m}^{+}$.

Finally, Proposition~\ref{prop:krasno fixed point} provides a unique
solution in $\mathcal{B}_{m}^{+}$ to \eqref{eq:fixed point equadiff Dm}.
It is then the unique solution of \eqref{eq:equadiff Dm}.
\end{proof}
\begin{lem}
\label{lem:differential equation D1}Let ${R}_{1},{R}_{2},\ V\in\mathcal{B}_{1}^{+}\subseteq\mathcal{L}_{k}$,
$k\in\mathbb{N}_{\geq 1}$, such that $\mathrm{ord}(z{R}_{1}),\mathrm{ord}(z{R}_{2})>\mathbf{1}_{k+1}$
and $\mathrm{ord}(zV)>\mathbf{1}_{k+1}$. Let ${h}\in z^{2}\mathbb{R}\left[\left[z\right]\right]$.
Then the following equation 
\begin{equation}
D_{1}(Q)+R_{1}\cdot Q+R_{2}\cdot h(Q)=V\label{eq:equadiff D1}
\end{equation}
admits a unique solution ${Q}\in\mathcal{B}_{\geq1}^{+}$. 
\end{lem}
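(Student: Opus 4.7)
The plan is to reformulate \eqref{eq:equadiff D1} as a fixed-point equation $\mathcal{T}(Q)=\mathcal{S}(Q)$ on $\mathcal{B}_{\geq 1}^+$, in the same spirit as the proof of Lemma \ref{lem:differential equation Dm}, and then apply Proposition \ref{prop:krasno fixed point}. Define
\[
\mathcal{T}(Q):=D_1(Q)+R_1\cdot Q,\qquad \mathcal{S}(Q):=V-R_2\cdot h(Q),\qquad Q\in\mathcal{B}_{\geq 1}^+.
\]
By Proposition \ref{prop:complete spaces}, $\mathcal{B}_{\geq 1}^+$ is complete in a suitable ultrametric (e.g.\ one induced from the lexicographic $\mathrm{ord}$, or obtained as a product metric from the distances $d_m$ on the direct summands $\mathcal{B}_m^+$ in $\mathcal{B}_{\geq 1}^+=\bigoplus_{m=1}^k\mathcal{B}_m^+$).

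For the Lipschitz behaviour of $\mathcal{S}$: since $h\in z^2\mathbb{R}[[z]]$, the difference $h(Q)-h(Q')$ factors as $(Q-Q')\cdot\sum_n a_n\sum_{j=0}^{n-1}Q^{n-1-j}(Q')^{j}$, so each resulting summand carries at least two extra factors in $\mathcal{B}_{\geq 1}^+$, and multiplication by $R_2$ (with $\mathrm{ord}(zR_2)>\mathbf{1}_{k+1}$) raises the order still further; together this yields the required contraction constant. For the homothety behaviour of $\mathcal{T}$: by \eqref{eq:ord D1 H} and \eqref{eq:lien D1 Dm}, on every summand $Q_m\in\mathcal{B}_m^+$ the derivation $D_1(Q_m)=\boldsymbol{\ell}_1\cdots\boldsymbol{\ell}_{m-1}D_m(Q_m)$ raises $\mathrm{ord}_{\boldsymbol{\ell}_m}$ by exactly $1$, while the correction $R_1\cdot Q_m$ sits at strictly higher order thanks to $\mathrm{ord}(zR_1)>\mathbf{1}_{k+1}$, so $D_1$ controls the leading behaviour and $\mathcal{T}$ acts as a homothety at the leading level.

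The main obstacle will be verifying the inclusion $\mathcal{S}(\mathcal{B}_{\geq 1}^+)\subseteq\mathcal{T}(\mathcal{B}_{\geq 1}^+)$, which amounts to solving the linear equation
\[
D_1(Q)+R_1\cdot Q=M
\]
for an arbitrary $M$ in the image of $\mathcal{S}$. I would write $Q=Q_1+\cdots+Q_k$ with $Q_m\in\mathcal{B}_m^+$, expand $R_1=\sum_{j\geq 1} R_1^{(j)}\boldsymbol{\ell}_1^j$ with $R_1^{(j)}\in\mathcal{B}_2$, and decompose $M$ in the $\boldsymbol{\ell}_1$-grading. The projection onto the $\boldsymbol{\ell}_1^1$-coefficient collapses, via \eqref{eq:lien D1 Dm}, to a single linear equation $D_2(Q_{\geq 2})+R_1^{(1)}Q_{\geq 2}=M^{(1)}$ for the $\mathcal{B}_{\geq 2}^+$-part, which is either handled inductively on $k$ or directly by the linear case of Lemma \ref{lem:differential equation Dm} with $m=2$. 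The projections onto $\boldsymbol{\ell}_1^j$ for $j\geq 2$ then produce a triangular cascade of linear first-order ODEs for the successive coefficients $A_{j-1}\in\mathcal{B}_2$ of $Q_1=\sum_n A_n\boldsymbol{\ell}_1^n$, each of the form $D_2(A_{j-1})+((j-1)+R_1^{(1)})A_{j-1}=(\text{already-determined quantities})$, solvable by the explicit integrating-factor formula \eqref{eq:solution linear equadiff Dm} that combines Lemma \ref{lem:primitive} with the formal exponential of Lemma \ref{lem:exponential}.

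Once the four hypotheses of Proposition \ref{prop:krasno fixed point} have been established, it produces a unique $Q\in\mathcal{B}_{\geq 1}^+$ with $\mathcal{T}(Q)=\mathcal{S}(Q)$, and this is the desired unique solution of \eqref{eq:equadiff D1}.
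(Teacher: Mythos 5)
Your plan---recast \eqref{eq:equadiff D1} as $\mathcal{T}(Q)=\mathcal{S}(Q)$ and apply Proposition \ref{prop:krasno fixed point}---is natural, but two of its hypotheses fail with the operators and spaces you propose.

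First, the Lipschitz and homothety constants do not separate on $\big(\mathcal{B}_{\geq 1}^{+},d_{1}\big)$. One has
\[
\mathcal{S}(Q)-\mathcal{S}(Q')=-R_{2}\,(Q-Q')\,\big(a_{2}(Q+Q')+\cdots\big),
\]
and $Q+Q'$ may lie in $\mathcal{B}_{\geq 2}^{+}$, so $\mathrm{ord}_{\boldsymbol{\ell}_{1}}(Q+Q')$ can be $0$; since moreover $\mathrm{ord}_{\boldsymbol{\ell}_{1}}(R_{2})$ may equal $1$, the best Lipschitz constant you can guarantee for $\mathcal{S}$ is $\mu=\tfrac{1}{2}$, which equals the homothety constant $\lambda=\tfrac{1}{2}$ of $\mathcal{T}$. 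The hypothesis $\mu<\lambda$ therefore fails. In the proof of Lemma \ref{lem:differential equation Dm}, it is precisely the restriction to $\mathcal{B}_{m}^{+}$ (where $\mathrm{ord}_{\boldsymbol{\ell}_{m}}(Q+Q')\geq 1$) that yields $\mu=\tfrac{1}{4}<\tfrac{1}{2}$; the extra summands $\mathcal{B}_{m}^{+}$, $m\geq 2$, inside $\mathcal{B}_{\geq 1}^{+}$ destroy exactly that margin. A product-type metric cannot repair this either, because both $D_{1}$ and multiplication by $R_{1}$ send every $\mathcal{B}_{m}^{+}$, $m\geq 2$, into $\mathcal{B}_{1}^{+}$, so $\mathcal{T}$ does not respect the direct-sum grading $\mathcal{B}_{\geq 1}^{+}=\bigoplus_{m}\mathcal{B}_{m}^{+}$.

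Second, the cascade equations $D_{2}(A_{j-1})+\big((j-1)+R_{1}^{(1)}\big)A_{j-1}=(\text{known})$ carry the nonzero real constant $j-1\geq 1$, and the integrating factor formula \eqref{eq:solution linear equadiff Dm} does \emph{not} apply to them: $\int\tfrac{j-1}{\boldsymbol{\ell}_{2}^{2}}\,d\boldsymbol{\ell}_{2}=-\tfrac{j-1}{\boldsymbol{\ell}_{2}}$ has $\boldsymbol{\ell}_{2}$-order $-1$, outside the domain of the formal exponential of Lemma \ref{lem:exponential}, and $\exp\!\big(-\tfrac{j-1}{\boldsymbol{\ell}_{2}}\big)$ is not a logarithmic transseries. (Those linear equations are still solvable, e.g.\ by inverting the bijective isometry $D_{2}+(j-1)\mathrm{id}$ through a Neumann series using that $D_{2}$ is a contraction, but not by that formula.) The paper avoids both obstacles simultaneously: it decomposes $R_{1},R_{2},V$ into pieces $\boldsymbol{\ell}_{1}\cdots\boldsymbol{\ell}_{m}R_{\cdot,m}$ with $R_{\cdot,m}\in\mathcal{B}_{m}^{+}$, and solves the \emph{full nonlinear} equation step by step in $\mathcal{B}_{k}^{+},\mathcal{B}_{k-1}^{+},\ldots$ by repeated use of Lemma \ref{lem:differential equation Dm}, carrying $h$ along at every step. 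In that scheme the linear coefficient multiplying the unknown always has strictly positive $\boldsymbol{\ell}_{m}$-order, so no constant ever appears and the integrating factor remains inside $\mathcal{B}_{\geq m}^{+}$; and because each stage lives in $\mathcal{B}_{m}^{+}$ rather than $\mathcal{B}_{\geq m}^{+}$, the constants separate and Proposition \ref{prop:krasno fixed point} applies.
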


\begin{proof}
Since $\mathrm{ord}_{\boldsymbol{\ell}_{1}}(V)$ in \eqref{eq:equadiff D1}
is not necessarily bigger than $1$ (unlike in the hypotheses of
Lemma~\ref{lem:differential equation Dm}), we cannot directly apply
Lemma~\ref{lem:differential equation Dm} to prove the existence
of a solution. Therefore, we decompose adequately the right-hand side
of \eqref{eq:equadiff D1}, and then we successively apply Lemma~\ref{lem:differential equation Dm}
in the subspaces $\mathcal{B}_{m}^{+}$, $1\leq m\leq k$.

Since $\mathrm{ord}\big(zR_{1}\big)>\mathbf{1}_{k+1}$, $\mathrm{ord}\big(zR_{2}\big)>\mathbf{1}_{k+1}$
and $\mathrm{ord}\big(zV\big)>\mathbf{1}_{k+1}$, we have the unique
decompositions 
\begin{align*}
{V} & =\boldsymbol{\ell}_{1}\cdots\boldsymbol{\ell}_{k}{V}_{k}+\cdots+\boldsymbol{\ell}_{1}{V}_{1},\\
{R}_{1} & =\boldsymbol{\ell}_{1}\cdots\boldsymbol{\ell}_{k}{R}_{1,k}+\cdots+\boldsymbol{\ell}_{1}{R}_{1,1},\\
{R}_{2} & =\boldsymbol{\ell}_{1}\cdots\boldsymbol{\ell}_{k}{R}_{2,k}+\cdots+\boldsymbol{\ell}_{1}{R}_{2,1},
\end{align*}
where $V_{i},{R}_{1,i},{R}_{2,i}\in\mathcal{B}_{i}^{+}\subseteq\mathcal{L}_{k}$,
$i=1,\ldots,k$. We proceed inductively. \\

\emph{Step 1.} We assume, without loss of generality, that $V_{k}\neq0$
(simply consider the lowest $m$,\ $1\leq m<k$, such that $V_{m}\neq0$).
By Lemma \ref{lem:differential equation Dm} and \eqref{eq:lien D1 Dm},
the nonlinear differential equation 
\begin{align}
D_{1}(Q_{k})+\boldsymbol{\ell}_{1}\cdots\boldsymbol{\ell}_{k-1}\boldsymbol{\ell}_{k}\left({R}_{1,k}\cdot Q_{k}+{R}_{2,k}\cdot{h}(Q_{k})\right) & =\boldsymbol{\ell}_{1}\cdots\boldsymbol{\ell}_{k-1}\boldsymbol{\ell}_{k}V_{k},\label{Eq13}\\
\boldsymbol{\ell}_{1}\cdots\boldsymbol{\ell}_{k-1}D_{k}(Q_{k})+\boldsymbol{\ell}_{1}\cdots\boldsymbol{\ell}_{k-1}\boldsymbol{\ell}_{k}\left({R}_{1,k}\cdot Q_{k}+{R}_{2,k}\cdot{h}(Q_{k})\right) & =\boldsymbol{\ell}_{1}\cdots\boldsymbol{\ell}_{k-1}\boldsymbol{\ell}_{k}V_{k},\nonumber \\
D_{k}(Q_{k})+\boldsymbol{\ell}_{k}{R}_{1,k}\cdot Q_{k}+\boldsymbol{\ell}_{k}{R}_{2,k}\cdot{h}(Q_{k}) & =\boldsymbol{\ell}_{k}V_{k}.\nonumber 
\end{align}
admits a unique solution $Q_{k}\in\mathcal{B}_{k}^{+}$.\\

\emph{Step 2.} Let $Q_{k}\in\mathcal{B}_{k}^{+}$ be the solution
of \eqref{Eq13} as above. In the next step, using \eqref{eq:lien D1 Dm},
consider the nonlinear differential equation {\small{}
\begin{align}
 & D_{1}\big(Q_{k}+Q_{k-1}\big)+\boldsymbol{\ell}_{1}\cdots\boldsymbol{\ell}_{k-2}\left(\boldsymbol{\ell}_{k-1}\boldsymbol{\ell}_{k}R_{1,k}+\boldsymbol{\ell}_{k-1}R_{1,k-1}\right)\cdot\left(Q_{k}+Q_{k-1}\right)+\nonumber\\
 & \qquad\qquad\qquad\qquad\qquad+\boldsymbol{\ell}_{1}\cdots\boldsymbol{\ell}_{k-2}\left(\boldsymbol{\ell}_{k-1}\boldsymbol{\ell}_{k}R_{2,k}+\boldsymbol{\ell}_{k-1}R_{2,k-1}\right)\cdot{h}\left(Q_{k}+Q_{k-1}\right)=\nonumber \\
 & \qquad\qquad\qquad\qquad\qquad=\boldsymbol{\ell}_{1}\cdots\boldsymbol{\ell}_{k-2}\left(\boldsymbol{\ell}_{k-1}\boldsymbol{\ell}_{k}V_{k}+\boldsymbol{\ell}_{k-1}V_{k-1}\right),\label{eq:dva-1} \\
 & D_{1}(Q_{k-1})\!\!+\!\boldsymbol{\ell}_{1}\cdots\boldsymbol{\ell}_{k-2}\thinspace\!\!\left(\!\boldsymbol{\ell}_{k-1}\boldsymbol{\ell}_{k}R_{1,k}+\boldsymbol{\ell}_{k-1}R_{1,k-1}\!+\!\!\big(\boldsymbol{\ell}_{k-1}\boldsymbol{\ell}_{k}R_{2,k}+\boldsymbol{\ell}_{k-1}R_{2,k-1}\big)\cdot h'(Q_{k})\right)\!\!\cdot\!Q_{k-1}+\nonumber\\
 & \qquad\qquad\qquad\qquad\qquad+\boldsymbol{\ell}_{1}\cdots\boldsymbol{\ell}_{k-2}\left(\boldsymbol{\ell}_{k-1}\boldsymbol{\ell}_{k}R_{2,k}+\!\!\boldsymbol{\ell}_{k-1}R_{2,k-1}\right)\cdot\sum_{i\geq2}\frac{h^{(i)}(Q_{k})}{i!}Q_{k-1}^{i}=\nonumber \\
 & \qquad\qquad\qquad\qquad\qquad=\boldsymbol{\ell}_{1}\cdots\boldsymbol{\ell}_{k-2}\left(\boldsymbol{\ell}_{k-1}V_{k-1}-\boldsymbol{\ell}_{k-1}R_{1,k-1}Q_{k}-h(Q_{k})\boldsymbol{\ell}_{k-1}R_{2,k-1}\right),\label{eq:tri}\\
 & D_{k-1}(Q_{k-1})+\!\!\left(\boldsymbol{\ell}_{k-1}\boldsymbol{\ell}_{k}{R}_{1,k}+\boldsymbol{\ell}_{k-1}{R}_{1,k-1}+\!\!\big(\boldsymbol{\ell}_{k-1}\boldsymbol{\ell}_{k}{R}_{2,k}+\boldsymbol{\ell}_{k-1}{R}_{2,k-1}\big)\cdot{h}'({Q}_{k})\right)\cdot{Q}_{k-1}+\nonumber\\
 & \qquad\qquad\qquad\qquad\qquad+\left(\boldsymbol{\ell}_{k-1}\boldsymbol{\ell}_{k}{R}_{2,k}+\boldsymbol{\ell}_{k-1}{R}_{2,k-1}\right)\cdot\sum_{i\geq2}\frac{{h}^{(i)}({Q}_{k})}{i!}{Q}_{k-1}^{i}=\nonumber \\
 & \qquad\qquad\qquad\qquad\qquad=\boldsymbol{\ell}_{k-1}{V}_{k-1}-\boldsymbol{\ell}_{k-1}{R}_{1,k-1}{Q}_{k}-{h}({Q}_{k})\boldsymbol{\ell}_{k-1}{R}_{2,k-1}.\label{eq:cetiri-1}
\end{align}
}Equation \eqref{eq:tri} is obtained from \eqref{eq:dva-1} by using
\eqref{Eq13} to eliminate $D_{1}\left(Q_{k}\right)$, and by Taylor
expansion of $h$. Notice that the order $\mathrm{ord}_{\boldsymbol{\ell}_{k-1}}$
of the right-hand side of \eqref{eq:cetiri-1} is strictly bigger
than or equal to $2$. Therefore, by Lemma \ref{lem:differential equation Dm},
Equation \eqref{eq:cetiri-1} admits a unique solution $Q_{k-1}\in\mathcal{B}_{k-1}^{+}$.\\

\emph{Step 3.} Proceeding inductively in $k$ steps, we prove that
\eqref{eq:equadiff D1} admits a unique solution $Q:=Q_{1}+\cdots+Q_{k}\in\mathcal{B}_{\geq1}^{+}$.
\end{proof}

\subsubsection{\label{subsec:end case b}End of the proof of case (b).}
\begin{proof}[Proof of Lemma \ref{lem:hypotheses T0 S0}]
\,

$(1)$ From the definition of $\mathcal{T}_{0}$ in \eqref{eq:S0 and T0},
we deduce that
\[
\mathrm{ord}\big(\mathcal{T}_{0}({H})\big)=\mathrm{ord}\big(D_{1}({H})\big),\ {H}\in\mathcal{B}_{\geq1}^{+}.
\]
In particular, 
\[
\mathrm{ord}_{\boldsymbol{\ell}_{1}}\big(\mathcal{T}_{0}({H})\big)=\mathrm{ord}_{\boldsymbol{\ell}_{1}}\big(D_{1}({H})\big)=\mathrm{ord}_{\boldsymbol{\ell}_{1}}({H})+1.
\]
Therefore, $\mathcal{T}_{0}$ is a $\frac{1}{2}$-homothety.\\

$(2)$ Part 2 follows directly from the definition of the operator
$\mathcal{S}_{0}$ in \eqref{eq:S0 and T0}. \\

$(3)$ Let us prove that $\mathcal{S}_{0}(\mathcal{B}_{\geq1}^{+})\subseteq\mathcal{T}_{0}(\mathcal{B}_{\geq1}^{+})$.
Let ${M}\in\mathcal{S}_{0}(\mathcal{B}_{\geq1}^{+})$. By \eqref{eq:S0 and T0},
since $\mathrm{ord}(z{R})>\mathbf{1}_{k+1}$, it follows that $\mathrm{ord}(z{M})>\mathbf{1}_{k+1}$.
We prove that ${M}\in\mathcal{T}_{0}(\mathcal{B}_{\geq1}^{+})$. Indeed,
dividing both sides of
\begin{equation}
\mathcal{T}_{0}({H})={M}\label{eq:to}
\end{equation}
by $-\lambda\log\lambda-(1+\log\lambda)R_{0}-\lambda\sum_{i\geq2}\frac{(-1)^{i}}{i(i-1)}\left(\frac{R_{0}}{\lambda}\right)^{i}$
and applying Lemma~\ref{lem:differential equation D1}, we obtain
that there exists a ${H}\in\mathcal{B}_{\geq1}^{+}$ such that \eqref{eq:to}
holds.

Note that in the particular case where $D_{1}\left(R_{0}\right)=R$, there is a cancellation of
the linear part of $\mathcal{T}_{0}$ in \eqref{eq:S0 and T0}, and the proof does not go through. However, in this case,
we get simpler equations and can prove similarly the same result.
\end{proof}
\begin{proof}[Proof of case (b)]
Note that the prenormalizing transformation ${\varphi}_{0}\in\mathfrak{L}^{0}$
satisfying \eqref{eq:equation conjugacy phi0} is necessarily of the
form 
\[
\varphi_{0}(z)=z+zH+\mathrm{h.o.b.}(z),\ H\in\mathcal{B}_{\geq1}^{+}\subseteq\mathcal{L}_{m},
\]
for some $m\in\mathbb{N}_{\geq 1}$.

By Lemma~\ref{lem:fixed point T0 S0}, to prove the existence of
a prenormalization ${\varphi}_{0}$ it is enough to prove the existence
of a solution $H\in\mathcal{B}_{\geq1}^{+}$ to the fixed point equation
\eqref{eq:fixed point T0 and S0}. By Lemma~\ref{lem:hypotheses T0 S0},
equation \eqref{eq:fixed point T0 and S0} satisfies all the assumptions
of Proposition~\ref{prop:krasno fixed point} on spaces $\mathcal{B}_{\geq1}^{+}\subseteq\mathcal{L}_{m}$,
for every $m\geq k$. Therefore, there exists a unique solution $H\in\mathcal{B}_{\geq1}^{+}\subseteq\mathcal{L}_{m}$,
$m\geq k$, to equation \eqref{eq:fixed point T0 and S0}. By Lemma~\ref{lem:fixed point T0 S0},
${\varphi}_{0}(z)=z+zH+\mathrm{h.o.b.}(z)$ is a prenormalization
satisfying \eqref{eq:equation conjugacy phi0}, which is unique in
$\mathfrak{L}^{0}$ up to $\mathrm{h.o.b.}(z)$. Moreover, $\varphi_{0}$
belongs to the smallest $\mathcal{L}_{k}$ such that $f\in\mathcal{L}_{k}$.

Now we have that 
\[
\varphi_{0}\circ f\circ\varphi_{0}^{-1}=f_{0}+g,\ \text{for some }g\in\mathcal{L}_{k}\text{ such that}\ \mathrm{ord}_{z}({g})>1.
\]
Hence we can apply case (a) to reduce ${f}_{0}+{g}$ to the normal
form ${f}_{0}$. By the proof of case $(a)$, we know that there exists
a unique $\varphi_{1}\in\mathfrak{L}^{0}$, such that 
\[
\varphi_{1}\circ\varphi_{0}\circ f\circ\varphi_{0}^{-1}\circ\varphi_{1}^{-1}=f_{0}.
\]
Moreover, ${\varphi}_{1}\in\mathcal{L}_{k}$.

Now, ${\varphi}:={\varphi}_{1}\circ{\varphi}_{0}\in\mathcal{L}_{k}\cap\mathfrak{L}^{0}$
is the normalizing change of variables in case (b), \emph{i.e.} 
\begin{equation}
\varphi\circ f\circ\varphi^{-1}=f_{0}.\label{eq:sve}
\end{equation}

It remains to prove the uniqueness of the normalizing change of variables
${\varphi}\in\mathfrak{L}^{0}$. Suppose that there exists another
change of variables ${\psi}\neq{\varphi}$, ${\psi}\in\mathfrak{L}^{0}$,
satisfying \eqref{eq:sve}. Then ${\psi}\in\mathcal{L}_{m}$, for
some $m\geq k$. Let us decompose ${\psi}$ as ${\psi}={\psi}_{1}\circ{\psi}_{0}$,
where ${\psi}_{0}$ is of the form ${\psi}_{0}(z)=z+zV+\mathrm{h.o.b.}(z)$,
$V\in\mathcal{B}_{\geq1}^{+}$, and $\mathrm{ord}_{z}({\psi}_{1}-\mathrm{id})>1$.
It is easy to see that such a $V$ is unique. Now, 
\begin{align*}
{\psi}_{0}\circ{f}\circ{\psi}_{0}^{-1}={\psi}_{1}^{-1}\circ{f}_{0}\circ{\psi}_{1}={f}_{0}+{g}_{1},\\
{\varphi}_{0}\circ{f}\circ{\varphi}_{0}^{-1}={\varphi}_{1}^{-1}\circ{f}_{0}\circ{\varphi}_{1}={f}_{0}+{g}_{2},
\end{align*}
where $\mathrm{ord}_{z}(g_{1}),\ \mathrm{ord}_{z}(g_{2})>1$, are
both prenormalization equations for ${f}$. By the proof of case (b),
$V=H$.

Now let $\tilde{\psi}_{1}:={\psi}\circ(z+zV)^{-1}\in\mathfrak{L}^{0}$
and $\tilde{\varphi}_{1}:={\varphi}\circ(z+zV)^{-1}\in\mathfrak{L}^{0}$, where $(z+zV)^{-1}$ is the compositional inverse of the transseries $z+zV$.
Set ${f}_{1}:=(z+zV)\circ{f}\circ(z+zV)^{-1}={f}_{0}+{h}$, $\mathrm{ord}_{z}({h})>1$.
Then 
\[
\tilde{\psi}_{1}\circ{f}_{1}\circ\tilde{\psi}_{1}^{-1}={f}_{0},\ \tilde{\varphi}_{1}\circ{f}_{1}\circ\tilde{\varphi}_{1}^{-1}={f}_{0}.
\]
By the uniqueness in the proof of case (a), $\tilde{\psi}_{1}=\tilde{\varphi}_{1}$.
Therefore, ${\psi}={\varphi}$. 
\end{proof}

\subsection{Proof of the minimality of the normal forms.}

\label{subsec:minimality normal forms}Since $\mathfrak{L}^{0}$ is
a group for composition, it is sufficient to prove that, by changes
of variables in $\mathfrak{L}^{0}$, we cannot further reduce ${f}_{0}\in\mathcal{L}_{k}$,
$k\in\mathbb{N}$, given in \eqref{eq:normal form}
to a strictly \emph{shorter} normal form with possibly different coefficients.
Let ${r}(z)=\eta z+\mathrm{h.o.t.}\in\mathfrak{L}^{H}$, with $\eta>0$. The existence of a transseries $\varphi \in \mathfrak{L}^{0}$ such that $\varphi \circ f_{0}\circ \varphi ^{-1}=r$ implies that $\eta =\lambda $.

By a direct generalization of Lemma~\ref{lem:fixed point T0 S0}
with $f_{0}$ instead of $f$ and $r$ instead of $f_{0}$, the existence
of a transseries ${\varphi}\in\mathfrak{L}^{0}$ which satisfies 
\begin{equation*}
\varphi\circ f_{0}\circ\varphi^{-1}=r,
\end{equation*}
is equivalent, by setting ${\varphi}=z+zH$, with $\mathrm{ord}(zH)>(1,\mathbf{0}_{k}),$
to the existence of $H\in\mathcal{B}_{\geq1}^{+}\subseteq\mathcal{L}_{k}$
which satisfies 
\begin{equation}
\mathcal{S}_{0}(H)=\mathcal{T}_{0}(H),\label{eq:uh}
\end{equation}
where $\mathcal{S}_{0}$ and $\mathcal{T}_{0}$ are as in \eqref{eq:S0 and T0}.
Notice that $r$ and $\varphi$ do not necessary belong to $\mathcal{L}_{k}$,
but to $\mathcal{L}_{m}$ for some $m\ge k$. Hence in this proof
the orders actually belong to $\mathbb{R}_{\ge0}\times\mathbb{Z}^{m}$,
via the canonical inclusion $\mathbb{R}_{\ge0}\times\mathbb{Z}^{k}\subseteq\mathbb{R}_{\ge0}\times\mathbb{Z}^{m}$.
Let ${g}:=f_{0}-r$. We prove that $\mathrm{ord}({g})>\mathbf{1}_{k+1}$ if such an $H$ exists. Suppose now that $\mathrm{ord}({f}_{0}-{r})\leq\mathbf{1}_{k+1}$
and that \eqref{eq:uh} can be solved with some $H\in\mathcal{B}_{\geq1}^{+}$,
$\mathrm{ord}({H})>\mathbf{0}_{k+1}$. Let $R\in \mathcal B_{\geq 1}^+$ such that $g=zR+\mathrm{h.o.b}(z)$. Let $R_0\in \mathcal B_{\geq 1}^+$ be such that $r=z+zR_0$. Then $\mathrm{ord}(R)\leq(0,\mathbf 1_k)$. Note that $R$ and $R_0$ differ from those originally defined in \eqref{eq:S0 and T0}. Analyzing the leading monomials
in the expressions \eqref{eq:S0 and T0} for $\mathcal{T}_{0}(H)$
and $\mathcal{S}_{0}(H)$, due to the fact that $\mathrm{ord}({g})\leq\mathbf{1}_{k+1}$,
we see that
\begin{align}
 & \mathrm{ord}(\mathcal{S}_{0}(H))=\mathrm{ord}({R}),\nonumber \\
 & \mathrm{ord}(\mathcal{T}_{0}(H))=\min\{\mathrm{ord}(H)+\mathrm{ord}({R}),\mathrm{ord}(D_1(H))\}.\label{eq:ord T0}
\end{align}
Indeed, $D_{1}$ increases the order by at most $(0,\mathbf{1}_{k})$, and the same holds for multiplication by $R$. Since $\mathrm{ord}(H)>\mathbf{0}_{k+1}$, $\mathrm{ord}(D_1 (H))>(0,\mathbf 1_{k})$ and since $\mathrm{ord}(R)\leq(0,\mathbf 1_k)$, 
\eqref{eq:ord T0} leads to a contradiction with \eqref{eq:uh}.

Since $\mathrm{ord}(g)=\mathrm{ord}(f_{0}-r)>\boldsymbol{1}_{k+1}$ and the order of every term in $f_{0}$ is at most $\boldsymbol{1}_{k+1}$, the minimality of $f_{0}$ follows.
\begin{rem}[A normalization algorithm]
\label{rem:constr} We finish this section by the following comment
on the proof of the first two parts of the Main Theorem. While the
proof of the similar result in \cite{mrrz1} was by nature purely
transfinite, as it consisted in eliminating, one after the other,
all the terms of the initial series $f$, the proof presented here
is more ``constructive'': it provides an algorithm for the construction
of the normalization $\varphi$ of a hyperbolic transseries $f$,
given as a composition 
\[
\varphi=\varphi_{1}\circ\varphi_{0}
\]
of the limits of two Picard sequences, as follows.

Suppose ${f}=\lambda z+{g}$, $0<\lambda<1$, where ${f}\in\mathcal{L}_{k}^{H}$ and
$\mathrm{ord}(g)>(1,\boldsymbol{0}_{k})$. If $\mathrm{ord}_{z}({g})=1$, we first
construct the prenormalizing transformation ${\varphi}_{0}=z+zH$,
as described in Subsection~\ref{subsec:proof case b}, where $H\in\mathcal{B}_{\geq1}^{+}$
is given as 
\begin{equation}
H:=\lim_{n\to\infty}(\mathcal{T}_{0}^{-1}\circ\mathcal{S}_{0})^{\circ n}(H_{0}),\label{eq:Pic}
\end{equation}
where $\mathcal{S}_{0},\ \mathcal{T}_{0}$ are defined in \eqref{eq:S0 and T0}
in Lemma~\ref{lem:fixed point T0 S0}. We can take any $H_{0}\in\mathcal{B}_{\geq1}^{+}$
as an initial condition. The Picard sequence \eqref{eq:Pic} converges with respect to
$d_{1}$ in $\mathcal{B}_{\geq1}^{+}$, by the Banach
fixed point theorem and Lemmas~\ref{lem:fixed point T0 S0} and \ref{lem:hypotheses T0 S0}.

Now, ${f}_{1}:={\varphi}_{0}\circ{f}\circ{\varphi}_{0}^{-1}={f}_{0}+{g}_{1}$,
where $\beta:=\mathrm{ord}_{z}(g_{1})>1$, is \emph{prenormalized}.
By Subsection~\ref{subsec:proof case a}, the normalizing change
of variables ${\varphi}_{1}$ is of the form ${\varphi}_{1}=\mathrm{id}+{h}$,
where ${h}\in\mathcal{L}_{k}$, $\mathrm{ord}_{z}({h})>1$, is given
by the limit of the following Picard sequence:
\[
{h}:=\lim_{n\to\infty}(\mathcal{T}_{{f}_{1}}^{-1}\circ\mathcal{S}_{{f}_{1}})^{\circ n}({h}_{0}).
\]
The above sequence converges in $\mathcal{L}_{m}^{\geq\beta}$ in
the valuation topology, for any ${h}_{0}\in\mathcal{L}_{m}^{\geq\beta}$,
where $m\geq k$, by Lemma~\ref{lem:properties Sf Tf } and Proposition
\ref{prop:krasno fixed point}. Here, $\mathcal{S}_{{f}_{1}}$ and
$\mathcal{T}_{{f}_{1}}$ are as in \eqref{eq:operators S and T}.

It remains to prove the third part of the statement of the Main Theorem,
which will be done in the next subsection.
\end{rem}

\subsection{Proof of the convergence of the generalized Koenigs sequence}

\label{subsec:convergence koenigs}

The sequence defined in \eqref{eq:gK} below is a generalization to
logarithmic transseries of the standard Koenigs sequence $\Big(\frac{{f}^{\circ n}}{\lambda^{n}}\Big)_{n}$(see
e.g.\,\cite{carleson-gamelin:complex-dynamics}). This sequence is
not the Picard sequence which was obtained in Subsections~\ref{subsec:proof case a}
and \ref{subsec:proof case b}, and which converges to the normalization
$\varphi$. Indeed, in the case of a prenormalized ${f}\in\mathcal{L}_{k}^{H}$,
the Picard sequence converges to the normalization ${\varphi}$ in the
valuation topology (see Remark~\ref{rem:constr} for more details).
This is not the case for the Koenigs sequence, which, however, converges
in the weak topology.
\begin{lem}[Convergence of the generalized Koenigs sequence]
\label{lem:koenigs} Let $k\in\mathbb{N}$ and ${f}\in\mathcal{L}_{k}^{H}$
be hyperbolic, and let ${f}_{0}$ be its formal normal form from \eqref{eq:normal form}.
For a parabolic initial condition ${h}\in\mathfrak{L}^{0}$, the \emph{generalized
Koenigs sequence} 
\begin{eqnarray}
\mathcal{P}_{{f}}^{\circ n}(h):=\Big(f_{0}^{\circ\left(-n\right)}\circ h\circ f^{\circ n}\Big)_{n}\label{eq:gK}
\end{eqnarray}
converges to the normalization $\varphi\in\mathfrak{L}^{0}$ in the
weak topology, as $n\to\infty$, if and only if $\mathrm{Lb}_{z}({h})=\mathrm{Lb}_{z}({\varphi})$. 
\end{lem}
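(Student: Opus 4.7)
The plan is to reduce the statement, via the conjugation identity $\varphi \circ f = f_0 \circ \varphi$ from Part 1 of the Main Theorem, to a simpler convergence. Setting $\psi_0 := \varphi^{-1} \circ h \in \mathfrak{L}^0$, iterating the conjugation yields $f_0^{-n} \circ \varphi = \varphi \circ f^{-n}$, so
\[
\mathcal{P}_f^{\circ n}(h) \;=\; \varphi \circ G_n,\qquad G_n := f^{-n} \circ \psi_0 \circ f^n.
\]
Left-composition by the fixed parabolic $\varphi$ is a bijection of $\mathfrak{L}^0$ and is continuous for the weak topology: for each target monomial $z^{\beta}\boldsymbol{\ell}_1^{n_1}\cdots\boldsymbol{\ell}_k^{n_k}$ only finitely many monomials of $\varphi$ can contribute to the corresponding coefficient of $\varphi \circ G$ (summability, in the sense of the Neumann-type argument underlying Proposition~\ref{prop:formal Taylor}), and each such contribution is a continuous finite function of finitely many coefficients of $G$. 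Hence it suffices to prove: $G_n \to \mathrm{id}$ weakly if and only if $\mathrm{Lb}_z(h) = \mathrm{Lb}_z(\varphi)$.

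The next step is to check that $\mathrm{Lb}_z$ respects composition of parabolic transseries: Taylor-expanding $\psi \circ \xi$ around $\mathrm{Lb}_z(\xi)$, the $\mathrm{h.o.b.}(z)$-part of $\xi$ only produces $\mathrm{h.o.b.}(z)$-contributions, so $\mathrm{Lb}_z(\psi \circ \xi) = \mathrm{Lb}_z\bigl(\mathrm{Lb}_z(\psi) \circ \mathrm{Lb}_z(\xi)\bigr)$. Applied to $\psi_0 = \varphi^{-1} \circ h$, the three conditions $\mathrm{Lb}_z(h) = \mathrm{Lb}_z(\varphi)$, $\mathrm{Lb}_z(\psi_0) = z$, and $\alpha := \mathrm{ord}_z(v) > 1$ (with $v := \psi_0 - z$) are equivalent. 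We thus reduce to: $G_n \to \mathrm{id}$ weakly if and only if $\alpha > 1$.

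To analyse $G_n$, apply Taylor's formula (Proposition~\ref{prop:formal Taylor}) around $f^n$:
\[
G_n - z \;=\; \sum_{i \ge 1} \frac{(f^{-n})^{(i)}(f^n)}{i!}\, v(f^n)^i,
\]
and track the $z$-orders and the $\lambda^n$-factors with the help of the identities of the technical list in Section~3. For the direction $(\Leftarrow)$, if $\alpha > 1$ then the $i$-th term has $z$-order $1 + i(\alpha-1) > 1$, and its leading coefficients carry a multiplicative factor $\lambda^{ni(\alpha-1)}$ times polynomial-in-$n$ contributions arising from the substitutions $\boldsymbol{\ell}_j(\lambda^n z)$. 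Since $0<\lambda<1$, this exponential decay dominates the polynomial growth, so each fixed coefficient of $G_n - z$ tends to $0$, i.e.\ $G_n \to z$ weakly.

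For $(\Rightarrow)$, argue contrapositively. Suppose $\alpha = 1$, so $v = zV + \mathrm{h.o.b.}(z)$ with $V \in \mathcal{B}_{\ge 1}^+$ nonzero; write $V = c\,\boldsymbol{\ell}_1^{m_1}\cdots\boldsymbol{\ell}_k^{m_k} + (\text{lex-higher monomials})$ with $\boldsymbol{m}_0 = \mathrm{ord}(V)$ and $c \ne 0$. The key point is that the coefficient of $z\boldsymbol{\ell}_1^{m_1}\cdots\boldsymbol{\ell}_k^{m_k}$ in $G_n - z$ comes entirely from the $i=1$ Taylor term and, within it, only from the constant $\lambda^{-n}$-part of $(f^{-n})'(f^n)$: every other correction in $(f^{-n})^{(i)}(f^n)$ is produced by $D_1$-type derivatives and therefore has strictly larger first $\boldsymbol{\ell}$-coordinate (by~\eqref{eq:ord D1 H}), as does $v(f^n)^i$ for $i \ge 2$. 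Hence that coefficient equals $\lambda^{-n}\cdot\lambda^n \cdot \bigl[\text{coefficient of }\boldsymbol{\ell}_1^{m_1}\cdots\boldsymbol{\ell}_k^{m_k}\text{ in }V(\lambda^n z)\bigr]$, and iterating~\eqref{eq:H applied to lambda z} gives $V(\lambda^n z) = V + n\log\lambda\cdot D_1(V) + \cdots$; since $D_1$ strictly increases the lex-order, this coefficient remains equal to $c$ for every $n$. Therefore the coefficient of $z\boldsymbol{\ell}_1^{m_1}\cdots\boldsymbol{\ell}_k^{m_k}$ in $G_n$ is the nonzero constant $c$ for all $n$, precluding weak convergence to $\mathrm{id}$. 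The main obstacle of the proof is precisely this final combinatorial book-keeping: one must verify carefully that no other source of contributions (from higher derivatives $(f^{-n})^{(i)}(f^n)$, from $\mathrm{h.o.b.}(z)$-terms of $v(f^n)$, or from the substitution $z \mapsto \lambda^n z$) can disturb or cancel the persistent leading coefficient $c$.
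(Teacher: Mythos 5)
Your high-level strategy — transport the problem via the conjugation identity to a simpler sequence, Taylor-expand, and track coefficients — is the right one, and your $(\Rightarrow)$ direction is essentially sound and close to what the paper does (their display \eqref{eq:taji}).

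However, the decomposition you chose is different from the paper's and this creates a real difficulty in the $(\Leftarrow)$ direction. The paper writes $\mathcal{P}_f^{\circ n}(h)\circ\varphi^{-1}=f_0^{\circ(-n)}\circ(h\circ\varphi^{-1})\circ f_0^{\circ n}$, so that the conjugating map being iterated is $f_0$, whose \emph{only block is at $z$-order $1$}. This is exactly what makes the iterates $f_0^{\pm n}$ tractable. You instead write $\mathcal{P}_f^{\circ n}(h)=\varphi\circ G_n$ with $G_n=f^{\circ(-n)}\circ\psi_0\circ f^{\circ n}$; algebraically this is equivalent (indeed $G_n=\varphi^{-1}\circ\widetilde G_n\circ\varphi$ where $\widetilde G_n$ is the paper's sequence), but now you are iterating $f$ itself, which in general has arbitrary higher-order blocks. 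The coefficients of $f^{\circ(-n)}$ at $z$-order $>1$ then typically grow like $\lambda^{-cn}$ for various $c>0$ (e.g.\ for $f=\lambda z+z^3$ the $z^3$-coefficient of $f^{\circ(-n)}$ behaves like $\lambda^{-3n}$), so your claim that the Taylor terms of $G_n-z$ have coefficients of the form $\lambda^{ni(\alpha-1)}$ times a \emph{polynomial} in $n$ is not correct as stated: each coefficient is actually a finite sum of terms of the form $p(n)\lambda^{c_j n}$ with various exponents $c_j$, some arising from exponentially blowing-up factors in $(f^{\circ(-n)})^{(i)}(f^{\circ n})$, and the fact that they all combine to produce decay is not a one-line consequence of ``exponential beats polynomial.'' The paper proves this by establishing a one-step recursion on the coefficients (Lemma~\ref{lem:okk}), which reduces the problem to a discrete convolution with the absolutely summable sequence $(\lambda^{i(\beta-1)})$, and then invokes the Toeplitz regular-summability theorem. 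You have not replaced this with a valid alternative, and without it the heart of the $(\Leftarrow)$ direction is missing.

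Two additional gaps: (i) the (bi)continuity of \emph{left}-composition by the fixed $\varphi$ in the weak topology needs the same kind of uniform support hypothesis as the paper's Lemma~\ref{lemma:sequential continuity} (right-composition), namely that $\bigcup_n\mathrm{Supp}(G_n-\mathrm{id})$ is contained in a common well-ordered set; the paper secures this for the sequence built from $f_0$ via the set $\widetilde{\mathcal{H}}$, but you would need a separate argument to secure it for the sequence built from $f$, and this is not obvious because $f$ has higher-order blocks; (ii) relatedly, for a fixed target monomial the number of Taylor indices $i$ that contribute to $\varphi\circ G_n$ must be bounded uniformly in $n$, which again requires the uniform support control you have not established.
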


In this statement, the convergence in the weak topology is seen in
the class $\mathcal{L}_{r}$, where $r:=\max\{k,m\},$ with $f\in\mathcal{L}_{k}$
and ${h}\in\mathcal{L}_{m}$. In the proof, we will need the following
general result.
\begin{lem}[Sequential continuity]
\label{lemma:sequential continuity} Let $\left({g}_{n}\right)_{n\in\mathbb{N}}$
be a sequence in $\mathcal{L}_{r}$, $r\in\mathbb{N}$, such that
$\mathrm{Supp}\left({g}_{n}\right),\,n\in\mathbb{N},$ are contained
in a common well-ordered subset $W$ of $\mathbb{R}_{\geq0}\times\mathbb{Z}^{r}$ such that $\min W>\boldsymbol{0}_{r+1}$.
Let ${\varphi}\in\mathcal{L}_{r}^{0}$. Then 
\[
{g}_{n}\to{g}_{0},\ \text{as }n\to\infty,
\]
if and only if 
\[
{g}_{n}\circ{\varphi}\to{g}_{0}\circ{\varphi},\ \text{as }n\to\infty,
\]
both in the weak topology on $\mathcal{L}_{r}$.
\end{lem}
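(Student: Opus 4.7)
The plan is to prove the forward implication $g_n \to g_0 \Rightarrow g_n\circ\varphi \to g_0 \circ \varphi$ directly, and deduce the converse by applying the same implication to the sequence $(g_n \circ \varphi)_n$ together with $\varphi^{-1} \in \mathcal{L}_r^0$. To make this symmetry argument legitimate, I first need to verify that the supports $\mathrm{Supp}(g_n \circ \varphi)$ are all contained in a common well-ordered subset $W' \subseteq \mathbb{R}_{\geq 0} \times \mathbb{Z}^r$ with $\min W' > \boldsymbol{0}_{r+1}$; this will drop out of the structural analysis performed for the forward direction.

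For the forward direction, the starting point is that right-composition with $\varphi$ is $\mathbb{R}$-linear on $\mathcal{L}_r$. Consequently, for every fixed output position $(\alpha, \boldsymbol{m}) \in \mathbb{R}_{\geq 0} \times \mathbb{Z}^r$, one may write
\[
[g_n\circ\varphi]_{\alpha,\boldsymbol{m}} \;=\; \sum_{(\beta,\boldsymbol{n}) \in W} [g_n]_{(\beta,\boldsymbol{n})} \cdot c_{(\beta,\boldsymbol{n})}^{(\alpha,\boldsymbol{m})},
\]
where $c_{(\beta,\boldsymbol{n})}^{(\alpha,\boldsymbol{m})} := \bigl[z^{\beta}\boldsymbol{\ell}_{1}^{n_{1}}\cdots\boldsymbol{\ell}_{r}^{n_{r}}\circ\varphi\bigr]_{\alpha,\boldsymbol{m}}$ is a fixed real number depending only on $\varphi$. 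The central claim is that the index set
\[
F(\alpha,\boldsymbol{m}) \;:=\; \bigl\{(\beta,\boldsymbol{n}) \in W : c_{(\beta,\boldsymbol{n})}^{(\alpha,\boldsymbol{m})} \neq 0\bigr\}
\]
is finite. Once this is granted, $[g_n\circ\varphi]_{\alpha,\boldsymbol{m}}$ becomes a finite $\mathbb{R}$-linear combination of the coefficients $[g_n]_{(\beta,\boldsymbol{n})}$ with $(\beta,\boldsymbol{n}) \in F(\alpha,\boldsymbol{m})$. The weak convergence $g_n \to g_0$ gives coordinate-wise convergence of these finitely many coefficients, and continuity of a linear map in finitely many real variables then yields $[g_n\circ\varphi]_{\alpha,\boldsymbol{m}} \to [g_0\circ\varphi]_{\alpha,\boldsymbol{m}}$.

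The main obstacle is precisely establishing the finiteness of $F(\alpha, \boldsymbol{m})$. To this end, I would write $\varphi = z + \psi$ with $\mathrm{ord}(\psi) > (1, \boldsymbol{0}_r)$ and apply the Taylor formula of Proposition~\ref{prop:formal Taylor} (with $g_0 = z$ and $g_1 = \psi$) to each monomial $z^{\beta}\boldsymbol{\ell}_1^{n_1}\cdots\boldsymbol{\ell}_r^{n_r}$. After factoring out $z^{\beta}\boldsymbol{\ell}_1^{n_1}\cdots\boldsymbol{\ell}_r^{n_r}$, the remaining factor is an infinite sum whose support, by Neumann's lemma, is contained in a well-ordered subset $\Sigma \subseteq \mathbb{R}_{\geq 0} \times \mathbb{Z}^r$ that depends only on $\varphi$ and not on $(\beta,\boldsymbol{n})$; hence
\[
\mathrm{Supp}\bigl(z^{\beta}\boldsymbol{\ell}_1^{n_1}\cdots\boldsymbol{\ell}_r^{n_r}\circ\varphi\bigr) \;\subseteq\; (\beta,\boldsymbol{n}) + \Sigma.
\]
If $F(\alpha, \boldsymbol{m})$ were infinite, I would obtain infinitely many distinct $(\beta_i, \boldsymbol{n}_i) \in W$ with $s_i := (\alpha,\boldsymbol{m}) - (\beta_i,\boldsymbol{n}_i) \in \Sigma$. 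The set $\{s_i\}$ of distinct values is an infinite subset of the well-ordered set $\Sigma$, so it contains a strictly increasing subsequence; translating back through subtraction (which reverses the lexicographic order) yields a strictly decreasing sequence in $W$, contradicting the well-orderedness of $W$. Finally, the same uniform inclusion shows that $\mathrm{Supp}(g_n \circ \varphi) \subseteq W + \Sigma =: W'$, which is well-ordered by Neumann's lemma with $\min W' = \min W > \boldsymbol{0}_{r+1}$; this provides exactly the hypothesis needed to apply the forward implication to $(g_n\circ\varphi)_n$ and $\varphi^{-1}$ in order to derive the converse.
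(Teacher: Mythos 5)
Your argument is correct and follows essentially the same route as the paper: express each coefficient $[g_n\circ\varphi]_{\alpha,\boldsymbol m}$ as a finite $\mathbb R$-linear combination of coefficients of $g_n$ at fixed positions (via Taylor expansion and Neumann's Lemma), conclude by continuity, and obtain the converse by composing with $\varphi^{-1}$ after checking that the supports $\mathrm{Supp}(g_n\circ\varphi)$ lie in a common well-ordered set $W+\Sigma$ with positive minimum. The only notable difference is that you make the finiteness of each index set $F(\alpha,\boldsymbol m)$ fully explicit through the translation structure $\mathrm{Supp}(z^{\beta}\boldsymbol\ell_1^{n_1}\cdots\boldsymbol\ell_r^{n_r}\circ\varphi)\subseteq(\beta,\boldsymbol n)+\Sigma$ and a well-orderedness contradiction, whereas the paper invokes Neumann's Lemma more directly to assert the existence of the linear polynomial $P_{\boldsymbol w}$; this is a welcome elaboration of a point the paper leaves brief, not a different method.
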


\begin{proof}
Suppose that $g_{n}\to g_{0}$, as $n\to\infty$, with respect to
the weak topology. Put $\varphi:=\mathrm{id}+\varphi_{1},$ where $\varphi_{1}\in\mathcal{L}_{r}$,
$\mathrm{ord}(\varphi_{1})>(1,\mathbf{0}_{r})$. By Taylor's expansion (Proposition \ref{prop:formal Taylor}), we have: 
\[
g_{n}\circ\varphi=g_{n}\big(\mathrm{id}+{\varphi}_{1}\big)=g_{n}+\sum_{i\geq1}\frac{g_{n}^{(i)}}{i!}{\varphi}_{1}^{i},\text{ for}\ n\in\mathbb{N}.
\]
It follows easily that, for $n\in\mathbb{N}$, the supports of $g_{n}\circ\varphi$
are contained in the semigroup $G$ generated by $W$ and
\[
\left\lbrace \left(\alpha-1,\mathbf m\right),(0,1,0,\ldots,0)_{r+1},\ldots,(0,0,\ldots,0,1)_{r+1}\right\rbrace ,
\]
for every $\left(\alpha,\mathbf m\right)\in\mathrm{Supp}({\varphi}_{1})$.
Let $\boldsymbol{w}\in G$. By Neumann's Lemma, there exists $k_{\boldsymbol{w}}\in\mathbb{N}$
and a linear polynomial $P_{\boldsymbol{w}}$ in $k_{\boldsymbol{w}}$
variables (the coefficients of which depend on ${\varphi}_{1}$ but
not on $n\in\mathbb{N}$), such that 
\begin{equation}
\left[{g}_{n}\circ{\varphi}\right]_{\boldsymbol{w}}=P_{\boldsymbol{w}}\left(\left[g_{n}\right]_{\boldsymbol{w}_{1}},\ldots,\left[g_{n}\right]_{\boldsymbol{w}_{k_{\boldsymbol{w}}}}\right),\ \text{for }n\in\mathbb{N}.\label{Eq15}
\end{equation}
Here, \textbf{$\boldsymbol{w}_{1}$},\ldots ,$\boldsymbol{w}_{k_{\boldsymbol{w}}}$
are finitely many elements of $G$, independent of $n\in\mathbb{N}$.
By continuity of polynomial functions, we have: 
\begin{equation}
P_{\boldsymbol w}\left(\left[g_{n}\right]_{\boldsymbol{w}_{1}},\ldots,\left[g_{n}\right]_{\boldsymbol{w}_{k_{\boldsymbol{w}}}}\right)\underset{n\rightarrow\infty}{\longrightarrow}P_{\boldsymbol w}\left(\left[g_{0}\right]_{\boldsymbol{w}_{1}},\ldots,\left[g_{0}\right]_{\boldsymbol{w}_{k_{\boldsymbol{w}}}}\right).\label{eq:tij}
\end{equation}
Thus, using \eqref{Eq15} and \eqref{eq:tij}, we obtain, for every
$\boldsymbol{w}\in G$: 
\[
\left[g_{n}\circ\varphi\right]_{\boldsymbol{w}}\underset{n\rightarrow\infty}{\longrightarrow}\left[g_{0}\circ\varphi\right]_{\boldsymbol{w}}.
\]

Now we prove the converse. Suppose that ${g}_{n}\circ{\varphi}\to{g}_{0}\circ{\varphi}$,
as $n\to\infty$, in the weak topology. Composing by $\varphi^{-1}$
and using the same arguments as above, we deduce that $g_{n}\to g_{0}$,
as $n\to\infty$, in the weak topology.
\end{proof}
\begin{proof}[Proof of Lemma~\ref{lem:koenigs}]
Notice that, for ${h},\,{\varphi}\in\mathfrak{L}^{0}$, $\mathrm{Lb}_{z}({h})=\mathrm{Lb}_{z}({\varphi})$
if and only if $\mathrm{ord}_{z}\big({h}\circ{\varphi}^{-1}-\mathrm{id}\big)>1$.

($\Leftarrow$) Let ${h}\in\mathfrak{L}^{0}$ and let ${\varphi}\in\mathcal{L}_{k}^{0}$
be the normalization of ${f}$. Let $m\in\mathbb{N}$ be the smallest
integer such that ${h}\in\mathcal{L}_{m}^{0}$ and $m\geq k$. Let
$\mathrm{ord}_{z}\big({h}\circ{\varphi}^{-1}-\mathrm{id}\big)>1$.

Since ${\varphi}\circ{f}^{\circ n}\circ{\varphi}^{-1}={f}_{0}^{\circ n}$
for every $n\in\mathbb{N}$, it is easy to show that
\[
\mathcal{P}_{f}^{\circ n}(h)\circ\varphi^{-1}=f_{0}^{\circ(-n)}\circ(h\circ\varphi^{-1})\circ f_{0}^{\circ n},\ \forall n\in\mathbb{N}.
\]
Note that $\mathcal{P}_{{f}}^{\circ n}({h})\circ{\varphi}^{-1}\in\mathcal{L}_{m}^{0}$.
Therefore, by Lemma~\ref{lemma:sequential continuity} applied to
$g_{n}:=\mathcal{P}_{f}^{\circ n}(h)\circ\varphi^{-1}$ (which have
support in the common well-ordered set $\widetilde{\mathcal{H}} \cup \left\lbrace (1,\boldsymbol{0}_{m})\right\rbrace $, for $\widetilde{\mathcal{H}}$ given below by \eqref{eq:definition H tilda}),
in order to prove the convergence of \eqref{eq:gK} to the normalization $\varphi$, it is sufficient to prove the equivalent
statement 
\[
f_{0}^{\circ(-n)}\circ(h\circ\varphi^{-1})\circ f_{0}^{\circ n}\underset{n\rightarrow\infty}{\longrightarrow}\mathrm{id}
\]
in the weak topology in $\mathcal{L}_{m}^{0}$.

Let $h_{1}$ be defined by ${h}\circ{\varphi}^{-1}=\mathrm{id}+h_{1}$.
Then, by assumption, $\mathrm{ord}_{z}(h_{1})>1$. By Taylor's expansion (Proposition \ref{prop:formal Taylor}), it follows that 
\begin{equation}
f_{0}^{\circ(-n)}\circ(h\circ\varphi^{-1})\circ f_{0}^{\circ n}=f_{0}^{\circ(-n)}\circ(\mathrm{id}+h_{1})\circ f_{0}^{\circ n}=\mathrm{id}+\sum_{i\geq1}\frac{\big(f_{0}^{\circ(-n)}\big)^{(i)}(f_{0}^{\circ n})}{i!}h_{1}^{i}(f_{0}^{\circ n}).\label{eq:tayli}
\end{equation}
By \eqref{eq:tayli}, for $n\in\mathbb{N}$, the leading block of
$f_{0}^{\circ(-n)}\circ(h\circ\varphi^{-1})\circ f_{0}^{\circ n}$
is equal to $z$. Let us define the set $\widetilde{\mathcal{H}}\subseteq\mathbb{R}_{>1}\times\mathbb{Z}^{m}$
by
\begin{equation}
\widetilde{\mathcal{H}}:=\bigcup_{n\in\mathbb{N}}\mathrm{Supp}\big(f_{0}^{\circ(-n)}\circ(h\circ\varphi^{-1})\circ f_{0}^{\circ n}-\mathrm{id}\big).\label{eq:definition H tilda}
\end{equation}
It can be shown using \eqref{eq:tayli} and the fact that $f_0$ contains only block of order $1$ in $z$ that $\widetilde{\mathcal{H}}$
is well-ordered. It remains to prove that 
\[
\big[{f}_{0}^{\circ(-n)}\circ({h}\circ{\varphi}^{-1})\circ{f}_{0}^{\circ n}\big]_{\alpha,\mathbf{k}}\underset{n\rightarrow\infty}{\longrightarrow}0
\]
for every $(\alpha,\mathbf{k})\in\widetilde{\mathcal{H}}$. We prove
this by transfinite induction on $\widetilde{\mathcal{H}}$.

\emph{The base case}. Let $(\alpha_{0},\mathbf{k}_{0}):=\min\widetilde{\mathcal{H}}$,
$\mathbf{k}_{0}\in\mathbb{Z}^{m}$. By \eqref{eq:tayli}, since ${f}_{0}=\lambda\cdot\mathrm{id}+\mathrm{h.o.t.}$,
\begin{equation}
\mathrm{Lt}\big({f}_{0}^{\circ(-n)}\circ(\mathrm{id}+h_{1})\circ{f}_{0}^{\circ n}-\mathrm{id}\big)=\lambda^{n(\alpha-1)}\mathrm{Lt}(h_{1}),\ \text{for }n\in\mathbb{N},\label{eq:vso}
\end{equation}
where $\alpha:=\mathrm{ord}_{z}(h_{1})$. Therefore, by definition
of $\widetilde{\mathcal{H}}$, $\mathrm{ord}(h_{1})=(\alpha_{0},\mathbf{k}_{0}).$
Now, by \eqref{eq:vso}, 
\[
\big[f_{0}^{\circ(-n)}\circ(h\circ\varphi^{-1})\circ f_{0}^{\circ n}\big]_{\alpha_{0},\mathbf{k}_{0}}=\lambda^{n(\alpha_{0}-1)}\big[h_{1}\big]_{\alpha_{0},\mathbf{k}_{0}}\to0,\ \text{as }n\to\infty,
\]
since $\alpha_{0}-1>0$ and $0<\lambda<1$.

\emph{The induction step.} For simplicity, let us denote, for $(\gamma,\mathbf{r})\in\widetilde{\mathcal{H}}$
and $n\in\mathbb{N}$, 
\begin{align}
\begin{split}P_{\gamma,\mathbf{r}}^{n} & :=\big[{f}_{0}^{\circ(-n)}\circ(\mathrm{id}+h_{1})\circ{f}_{0}^{\circ n}\big]_{\gamma,\mathbf{r}}.
\end{split}
\label{eq:definition Pn and Rn}
\end{align}
Suppose that $(\beta,\mathbf{m})\in\widetilde{\mathcal{H}}$ and $(\beta,\mathbf{m})>(\alpha_{0},\mathbf{k}_{0})$
and that 
\begin{equation*}
P_{\gamma,\mathbf{r}}^{n}\to0,\ \text{as }n\to\infty,\ \forall(\gamma,\mathbf{r})\in\widetilde{\mathcal{H}},\text{ such that}\ (\gamma,\mathbf{r})<(\beta,\mathbf{m}).
\end{equation*}
We prove that $P_{\beta,\mathbf{m}}^{n}\to0,\ \text{as }n\to\infty.$

Using inductively Lemma~\ref{lem:okk} (the proof of which is just
after the current one), we obtain:
\begin{align}
\begin{split}P_{\beta,\mathbf{m}}^{n+1} & =A_{\beta,\mathbf{m}}\big(\{P_{\gamma,\mathbf{n}}^{n}:\ (\gamma,\mathbf{n})\in\widetilde{\mathcal{H}},\ (\gamma,\mathbf{n})<(\beta,\mathbf{m})\}\big)+\\
 & \quad+\lambda^{\beta-1}A_{\beta,\mathbf{m}}\big(\{P_{\gamma,\mathbf{n}}^{n-1}:\ (\gamma,\mathbf{n})\in\widetilde{\mathcal{H}},\ (\gamma,\mathbf{n})<(\beta,\mathbf{m})\}\big)+\ldots+\\
 & \quad\quad+\lambda^{n(\beta-1)}A_{\beta,\mathbf{m}}\big(\{P_{\gamma,\mathbf{n}}^{0}:\ (\gamma,\mathbf{n})\in\widetilde{\mathcal{H}},\ (\gamma,\mathbf{n})<(\beta,\mathbf{m})\}\big)+\lambda^{(n+1)(\beta-1)}P_{\beta,\mathbf{m}}^{0}\\
 & =\sum_{i=0}^{n}\lambda^{i(\beta-1)}A_{\beta,\mathbf{m}}\big(\{P_{\gamma,\mathbf{n}}^{n-i}:\ (\gamma,\mathbf{n})\in\widetilde{\mathcal{H}},\ (\gamma,\mathbf{n})<(\beta,\mathbf{m})\}\big)+\lambda^{(n+1)(\beta-1)}[h_{1}]_{\beta,\mathbf m}.
\end{split}
\label{eq:uuse}
\end{align}
Note that, for $(\beta,\mathbf{m})>(1,\mathbf{0}_{m})$, $P_{\beta,\mathbf{m}}^{0}=[h_{1}]_{\beta,\mathbf m}$.
Let $a_{\gamma,\mathbf n}\in\mathbb{R}$ be the nonzero coefficient of $P_{\gamma,\mathbf n}^n$ (it does not depend on $n\in\mathbb N$) in the polynomial $A_{\beta,\mathbf{m}}$, $(1,\mathbf{0}_{m})<(\gamma,\mathbf{n})<(\beta,\mathbf{m})$.
We prove that the sum 
\begin{equation}
a_{\gamma,\mathbf n}\sum_{i=0}^{n}\lambda^{i(\beta-1)}P_{\gamma,\mathbf{n}}^{n-i}\label{eq:sumii}
\end{equation}
converges to $0$, as $n\to\infty$. Then, since the first sum in
the last row of \eqref{eq:uuse} is a sum of finitely many sums of
type \eqref{eq:sumii}, it converges to $0$, as $n\to\infty$. Moreover,
since $0<|\lambda|<1$, $\lambda^{(\beta-1)(n+1)}\to0$, as $n\to\infty$.
Therefore, by \eqref{eq:uuse}, $P_{\beta,\mathbf{m}}^{n+1}\to0$,
as $n\to\infty$. This proves the induction step.

It remains to prove the convergence of \eqref{eq:sumii} to $0$ as
$n\rightarrow\infty$. We observe that \eqref{eq:sumii} is, up to
the factor $a_{\gamma,\mathbf n}$, the general term of the discrete convolution product
of the sequence $\left(\lambda^{i\left(\beta-1\right)}\right)_{i\in\mathbb{N}}$
and the sequence $\left(P_{\gamma,\mathbf{n}}^{i}\right)_{i\in\mathbb{N}}$.
The series $\sum_{i\in\mathbb{N}}\lambda^{i\left(\beta-1\right)}$
is absolutely convergent and $\sum_{i\in\mathbb{N}}\left|\lambda^{i\left(\beta-1\right)}\right|=\dfrac{1}{1-\left|\lambda\right|^{\beta-1}}$.
Hence, up to the multiplication by $1-\left|\lambda\right|^{\beta-1}$,
this convolution can be expressed as the product of the infinite vector
$\left(P_{\gamma,\mathbf{n}}^{i}\right)_{i\in\mathbb{N}}$ by an
infinite \emph{Toeplitz matrix}. It is well known (see for example
\cite[Section 2.16, Theorem 1]{goffman_pedrick:first_course_functional_analysis})
that such a product is a \emph{regular method of summability}, which
respects the limits of convergent sequences. Since, by hypothesis,
$P_{\gamma,\mathbf{n}}^{i}\underset{i\rightarrow\infty}{\rightarrow}0$,
it follows that \eqref{eq:sumii} tends to $0$ as $n\rightarrow\infty$.\\

$(\Rightarrow)$ Conversely, let ${h}\in\mathfrak{L}^{0}$ and let
$m\in\mathbb{N}$ be the minimal integer such that ${h},\ {f}\in\mathcal{L}_{m}$.
Suppose that
\begin{equation*}
f_{0}^{\circ(-n)}\circ(h\circ\varphi^{-1})\circ f_{0}^{\circ n}\underset{n\rightarrow\infty}{\longrightarrow}\mathrm{id},
\end{equation*}
in the weak topology in $\mathcal{L}_{m}^{0}$, and that $\mathrm{ord}_{z}\big({h}\circ{\varphi}^{-1}-\mathrm{id}\big)=1$.
Setting ${h}\circ{\varphi}^{-1}=z+z{R}+\mathrm{h.o.b.}(z)\,$, where
${R}\in\mathcal{B}_{\geq 1}^{+}$, $R\neq0$, we compute by Taylor's expansion (Proposition \ref{prop:formal Taylor}):
\begin{align}
\begin{split}{f}_{0}^{\circ(-n)} & \circ(z+z{R}+\mathrm{h.o.b.}(z))\circ{f}_{0}^{\circ n}=\mathrm{id}+\frac{{f}_{0}^{\circ n}\cdot{R}({f}_{0}^{\circ n})}{\frac{d}{dz}{f}_{0}^{\circ n}}+\cdots
\end{split}
\label{eq:spl}
\end{align}
Now, since $f_{0}^{\circ n}=\lambda^{n}z+\mathrm{h.o.t}$. and
since $\mathrm{Lt}\left(R\left(f_{0}^{\circ n}\right)\right)=\mathrm{Lt}\left(R\right)$,
by \eqref{eq:spl} we get that
\begin{equation}
\begin{split}f_{0} & ^{\circ(-n)}\circ(z+zR+\mathrm{h.o.b.}(z))\circ f_{0}^{\circ n}-\mathrm{id}=z\mathrm{Lt}(R)+\mathrm{h.o.t.}\end{split}
\label{eq:taji}
\end{equation}
The first term does not change with $n$, and therefore the right-hand
side of \eqref{eq:taji} does not converge to $0$ in the weak topology. 
\end{proof}
Finally, the next lemma is a technical result used in in the proof
of Lemma \ref{lem:koenigs}:
\begin{lem}
\label{lem:okk} Let $m\in\mathbb{N}$ and ${f}\in\mathcal{L}_{m}^{H}$
be hyperbolic. Let ${f}_{0}$ be its formal normal form and $\varphi\in\mathcal{L}_{m}^{0}$
the normalization of $f$. Let ${h}\in\mathcal{L}_{m}^{0}$ such that
$\mathrm{Lb}{}_{z}(h)=\mathrm{Lb}_{z}(\varphi)$ and let $\widetilde{\mathcal{H}}$
be as in \eqref{eq:definition H tilda}. Let $P_{\beta,\mathbf{m}}^{n}$,
for $(\beta,\mathbf{m})\in\widetilde{\mathcal{H}}$ and $n\in\mathbb{N}$,
be as in \eqref{eq:definition Pn and Rn}. Then
\begin{equation*}
P_{\beta,\mathbf{m}}^{n+1}=A_{\beta,\mathbf{m}}\big(\{P_{\gamma,\mathbf{n}}^{n}:\ (\gamma,\mathbf{n})\in\widetilde{\mathcal{H}},\ (\gamma,\mathbf{n})<(\beta,\mathbf{m})\}\big)+\lambda^{\beta-1}P_{\beta,\mathbf{m}}^{n},\ \text{for }n\in\mathbb{N}.
\end{equation*}
Here, for $(\beta,\mathbf{m})\in\widetilde{\mathcal{H}}$, the $A_{\beta,\mathbf{m}}$
are linear polynomials in the variables $\{P_{\gamma,\mathbf{n}}^{n}:\ (\gamma,\mathbf{n})\in\widetilde{\mathcal{H}},\ (\gamma,\mathbf{n})<(\beta,\mathbf{m})\}$,
whose coefficients are independent of $n$. The coefficients depend
only on ${f}$, $h$ and $\varphi$.
\end{lem}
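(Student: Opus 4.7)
The plan is to derive the recursion from the cocycle identity $F_{n+1}=f_0^{\circ(-1)}\circ F_n\circ f_0$, where $F_n := f_0^{\circ(-n)}\circ(\mathrm{id}+h_1)\circ f_0^{\circ n}$ and $h_1 := h\circ\varphi^{-1}-\mathrm{id}$ satisfies $\mathrm{ord}_z(h_1)>1$. Setting $F_n=\mathrm{id}+G_n$, so that $P^n_{\beta,\mathbf{m}}=[G_n]_{\beta,\mathbf{m}}$, the cocycle identity becomes $f_0(z+G_{n+1}(z))=f_0(z)+G_n(f_0(z))$, and Proposition~\ref{prop:formal Taylor} expands this as
\[
f_0'(z)\cdot G_{n+1}(z) + \sum_{i\ge 2}\frac{f_0^{(i)}(z)}{i!}\,G_{n+1}(z)^{i} \;=\; G_n\bigl(f_0(z)\bigr).
\]

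First I would analyse the right-hand side. Since $f_0=\lambda z+zR_0$ has leading term $\lambda z$, and since $\boldsymbol{\ell}_j^{k}(f_0)=\boldsymbol{\ell}_j^{k}+\mathrm{h.o.b.}(\boldsymbol{\ell}_1)$ by \eqref{eq:H applied to lambda z}, each monomial $z^{\gamma}\boldsymbol{\ell}^{\mathbf{n}}$ composed with $f_0$ satisfies
\[
f_0^{\gamma}\cdot\boldsymbol{\ell}_1^{n_1}(f_0)\cdots\boldsymbol{\ell}_r^{n_r}(f_0) \;=\; \lambda^{\gamma}z^{\gamma}\boldsymbol{\ell}^{\mathbf{n}} \;+\; (\text{strictly higher lex order}),
\]
with correction coefficients depending only on the fixed Taylor data of $f_0$. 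Extracting $[\cdot]_{\beta,\mathbf{m}}$ then gives
\[
[G_n\circ f_0]_{\beta,\mathbf{m}} \;=\; \lambda^{\beta}\,P^n_{\beta,\mathbf{m}} \;+\; (\text{linear combination of }P^n_{\gamma,\mathbf{n}}\text{ with }(\gamma,\mathbf{n})<(\beta,\mathbf{m})),
\]
whose coefficients are $n$-independent.

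Next I would analyse the left-hand side. As $f_0'=\lambda+R_0+D_1(R_0)$ has leading term $\lambda$, the linear part yields $[f_0'\cdot G_{n+1}]_{\beta,\mathbf{m}}=\lambda P^{n+1}_{\beta,\mathbf{m}}$ plus terms involving only $P^{n+1}_{\beta,\mathbf{n}'}$ with $\mathbf{n}'<\mathbf{m}$. Since $\mathrm{ord}_z(G_{n+1})>1$, any product of $i\ge 2$ coefficients $[G_{n+1}]_{\gamma_j,\mathbf{n}_j}$ contributing to $[f_0^{(i)}G_{n+1}^i]_{\beta,\mathbf{m}}$ must satisfy $\sum_j\gamma_j\le\beta$ with each $\gamma_j>1$, forcing $\gamma_j<\beta$ and hence $(\gamma_j,\mathbf{n}_j)<(\beta,\mathbf{m})$. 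Isolating $P^{n+1}_{\beta,\mathbf{m}}$ and dividing by $\lambda$ produces the diagonal factor $\lambda^{\beta-1}$; then a transfinite induction on $(\beta,\mathbf{m})\in\widetilde{\mathcal{H}}$ substitutes the already-established formulas for $P^{n+1}_{\gamma,\mathbf{n}}$ with $(\gamma,\mathbf{n})<(\beta,\mathbf{m})$ to eliminate all $P^{n+1}$'s from the right-hand side.

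The main obstacle is to verify that after this substitution the resulting expression is genuinely \emph{linear} in $\{P^n_{\gamma,\mathbf{n}}\}_{(\gamma,\mathbf{n})<(\beta,\mathbf{m})}$ rather than merely polynomial, which is what the convolution/Toeplitz argument in the proof of Lemma~\ref{lem:koenigs} relies on. This requires careful bookkeeping of how the nonlinear Taylor contributions $f_0^{(i)}G_{n+1}^i/i!$ interact with the inductive substitutions, exploiting the specific structure of $f_0=\lambda z+zR_0$ in which the $\boldsymbol{\ell}$-depths and $z$-orders combine in a constrained way. The $n$-independence of all resulting coefficients is then automatic, since every constant involved arises from the fixed Taylor coefficients of $f_0$ and the fixed blocks $R_0$, $D_1(R_0)$, $\boldsymbol{\ell}_j^{n_j}(f_0)-\boldsymbol{\ell}_j^{n_j}$, none of which depend on $n$.
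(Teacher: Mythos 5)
Your route differs from the paper's. You start from the \emph{implicit} cocycle $f_0\circ F_{n+1}=F_n\circ f_0$ and expand the left side by Taylor, which forces you first to isolate $\lambda P^{n+1}_{\beta,\mathbf m}$ from $f_0'\cdot G_{n+1}$ and then to eliminate the remaining $P^{n+1}_{\gamma,\mathbf n}$, $(\gamma,\mathbf n)<(\beta,\mathbf m)$, by a transfinite substitution. The paper instead works with the \emph{explicit} identity $F_{n+1}=f_0^{-1}\circ(F_n\circ f_0)$ and breaks it into two separate steps: (i)~a right-composition step, introducing the auxiliary coefficients $R^{n+1}_{\gamma,\mathbf r}:=\bigl[f_0^{\circ(-n)}\circ(\mathrm{id}+h_1)\circ f_0^{\circ(n+1)}\bigr]_{\gamma,\mathbf r}$, for which linearity in the $P^n$'s is immediate because $G\mapsto G\circ f_0$ is $\mathbb R$-linear; and (ii)~a left-composition step, where one sets $k_0:=f_0^{-1}-\tfrac1\lambda\mathrm{id}$ (all of whose blocks have $z$-order $1$) and uses the explicit Taylor formula $f_0^{-1}(f_0+r)=\mathrm{id}+\tfrac1\lambda r+\sum_{i\ge1}\tfrac{k_0^{(i)}(f_0)}{i!}r^i$ together with an order bookkeeping showing that each support point of the $i$-th term decomposes as $(\beta,\mathbf m)=(\gamma_1,\mathbf n_1)+\cdots+(\gamma_i,\mathbf n_i)+(1-i,\mathbf v)$ with every $(\gamma_j,\mathbf n_j)<(\beta,\mathbf m)$. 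Because both steps yield the target coefficient \emph{directly} (no implicit equation to invert), the paper never needs the recursive substitution you introduce, and the diagonal coefficient $\lambda^{\beta-1}=\tfrac1\lambda\cdot\lambda^{\beta}$ falls out of the product of the two explicit diagonals.

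The genuine gap in your proposal is the one you flag and then leave unresolved. You write that ``the main obstacle is to verify that after this substitution the resulting expression is genuinely linear \dots\ rather than merely polynomial'' and that ``this requires careful bookkeeping''---but you never carry out that bookkeeping, and the lemma's statement is precisely the assertion that $A_{\beta,\mathbf m}$ is a linear polynomial with $n$-independent coefficients. In your setup this is not automatic: the terms $\tfrac{f_0^{(i)}}{i!}G_{n+1}^{\,i}$ for $i\ge2$ contribute products $P^{n+1}_{\gamma_1,\mathbf n_1}\cdots P^{n+1}_{\gamma_i,\mathbf n_i}$ at index $(\beta,\mathbf m)$, and once you substitute the inductively established formulas for the lower $P^{n+1}$'s (each expressed in terms of $P^n$'s) you obtain a polynomial of degree $\ge2$ in the $P^n$'s, not a linear one. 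You give no mechanism by which these higher-degree terms cancel. As it stands, the proof establishes only the triangular recursion $P^{n+1}_{\beta,\mathbf m}=\text{(polynomial in lower }P^n\text{'s)}+\lambda^{\beta-1}P^n_{\beta,\mathbf m}$; either the linearity has to be justified, or the downstream Toeplitz argument in Lemma~\ref{lem:koenigs} has to be reworked to accommodate a polynomial $A_{\beta,\mathbf m}$. Since you explicitly defer this verification, the proposal is incomplete precisely where the statement being proved makes its nontrivial claim.
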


In this statement, $m\in\mathbb{N}$ is the minimal integer such that
both ${f}$ and ${h}$ belong to $\mathcal{L}_{m}$.
\begin{proof}
\ Let $P_{\gamma,\mathbf{r}}^{n}$,
$(\gamma,\mathbf{r})\in\widetilde{\mathcal{H}}$, $n\in\mathbb{N}$,
be as in \eqref{eq:definition Pn and Rn}. We define, for $(\gamma,\mathbf{r})\in\widetilde{\mathcal{H}}$, $n\in\mathbb{N}$, 
\begin{align}\label{eq:Rg}
R_{\gamma,\mathbf{r}}^{n}:=\big[{f}_{0}^{\circ(-n+1)}\circ(\mathrm{id}+h_{1})\circ{f}_{0}^{\circ n}\big]_{\gamma,\mathbf{r}}.
\end{align}
First, we prove that 
\begin{equation}
R_{\beta,\mathbf{m}}^{n+1}=B_{\beta,\mathbf{m}}\big(\{P_{\beta,\mathbf{n}}^{n}:\ (\beta,\mathbf{n})\in\widetilde{\mathcal{H}},\ \mathbf{n}<\mathbf{m}\}\big)+\lambda^{\beta}\cdot P_{\beta,\mathbf{m}}^{n},\ (\beta,\mathbf{m})\in\widetilde{\mathcal{H}},\ n\in\mathbb{N},\label{eq:shape Rn}
\end{equation}
where $B_{\beta,\mathbf{m}}$, $(\beta,\mathbf{m})\in\widetilde{\mathcal{H}}$,
are linear polynomials in the variables $\{P_{\beta,\mathbf{n}}^{n}\in\widetilde{\mathcal{H}}:\,\mathbf{n}<\mathbf{m}\}$,
whose coefficients are independent of $n$. Indeed, for any $(\beta,\mathbf{m})\in\widetilde{\mathcal{H}}$,
$\mathbf{m}=(m_{1},\ldots,m_{m})\in\mathbb{Z}^{m}$, and $n\in\mathbb{N}$,
\[
\big(P_{\beta,\mathbf{m}}^{n}z^{\beta}\boldsymbol{\ell}_{1}^{m_{1}}\cdots\boldsymbol{\ell}_{m}^{m_{m}}\big)\circ{f}_{0}=P_{\beta,\mathbf{m}}^{n}\lambda^{\beta}z^{\beta}\boldsymbol{\ell}_{1}^{m_{1}}\cdots\boldsymbol{\ell}_{m}^{m_{m}}\big(1+\mathrm{h.o.t.}(\mathcal{B}_{1})\big).
\]
Here, the notation $\mathrm{h.o.t.}(\mathcal B_1)$ means \emph{higher order terms lying in $\mathcal B_1$}. The statement \eqref{eq:shape Rn} then follows simply by $f_{0}^{-n}\circ(\mathrm{id}+{h_1})\circ{f}_{0}^{n+1}=\big(f_{0}^{-n}\circ(\mathrm{id}+{h_1})\circ{f}_{0}^{n}\big)\circ{f}_{0}$
and Neumann's Lemma. Indeed, it can be seen that the coefficient $R_{\beta,\mathbf m}^{n+1}$ in step $n+1$ can be expressed as a linear polynomial, with coefficients independent of $n$, of finitely many coefficients $P_{\beta,\mathbf n}^n$, $\mathbf n<\mathbf m$, from previous step and of $P_{\beta,\mathbf m}^n$. Here, $h_1$ is as in Lemma~\ref{lem:koenigs}, defined by $\mathrm{id}+h_1=h\circ \varphi^{-1}$. 

Let us prove that: 
\begin{equation}
P_{\beta,\mathbf{m}}^{n+1}=C_{\beta,\mathbf{m}}\big(\{R_{\gamma,\mathbf{n}}^{n+1}:\ (\gamma,\mathbf{n})\in\widetilde{\mathcal{H}},\ (\gamma,\mathbf{n})<(\beta,\mathbf{m})\}\big)+\frac{1}{\lambda}R_{\beta,\mathbf{m}}^{n+1}.\label{eq:stat2}
\end{equation}
Then by \eqref{eq:shape Rn} it follows that, for $(\beta,\mathbf{m})\in\widetilde{\mathcal{H}}$
and $n\in\mathbb{N}$,
\begin{align*}
\begin{split}P_{\beta,\mathbf{m}}^{n+1} & =C_{\beta,\mathbf{m}}\big(\{R_{\gamma,\mathbf{n}}^{n+1}:\ (\gamma,\mathbf{n})\in\widetilde{\mathcal{H}},\ (\gamma,\mathbf{n})<(\beta,\mathbf{m})\}\big)\\
 & \qquad+\frac{1}{\lambda}\cdot B_{\beta,\mathbf{m}}\big(\{P_{\beta,\mathbf{n}}^{n}:\ (\beta,\mathbf{n})\in\widetilde{\mathcal{H}},\ \mathbf{n}<\mathbf{m}\}\big)+\lambda^{\beta-1}P_{\beta,\mathbf{m}}^{n}\\
 & =A_{\beta,\mathbf{m}}(\{P_{\gamma,\mathbf{n}}^{n}:\ (\gamma,\mathbf{n})\in\widetilde{\mathcal{H}},\ (\gamma,\mathbf{n})<(\beta,\mathbf{m})\})+\lambda^{\beta-1}P_{\beta,\mathbf{m}}^{n},
\end{split}
\end{align*}
where $C_{\beta,\mathbf{m}}$ and $A_{\beta,\mathbf{m}}$ are linear
polynomials in the variables $\{P_{\gamma,\mathbf{n}}^{n}:(\gamma,\mathbf{n})\in\widetilde{\mathcal{H}},\ (\gamma,\mathbf{n})<(\beta,\mathbf{m})\}$,
whose coefficients are independent of $n$. This proves the lemma.

In order to prove \eqref{eq:stat2}, let ${k}_{0}:={f}_{0}^{-1}-\frac{1}{\lambda}\mathrm{id}$.
It is easy to see that ${k}_{0}$ contains only monomials which are
of order $1$ (in $z$). Let $h_1$, as before, be defined as $\mathrm{id}+h_1=h\circ\varphi^{-1}$,
and $r$ by:
\begin{equation*}
\big(f_{0}^{\circ(-n)}\circ(\mathrm{id}+h_1)\circ f_{0}^{\circ(n+1)}\big)=f_{0}+r.
\end{equation*}
 Then $\mathrm{ord}_{z}(r)>1$ since $\mathrm{ord}_{z}\left(h_1\right)>1$.
From Taylor's expansion (Proposition \ref{prop:formal Taylor}) we obtain: 
\begin{align}
{f}_{0}^{-1}\circ\big({f}_{0}^{\circ(-n)}\circ(\mathrm{id}+{h_1})\circ{f}_{0}^{\circ(n+1)}\big) & =\mathrm{id}+\sum_{i\geq1}\frac{\left({f}_{0}^{-1}\right)^{(i)}({f}_{0})}{i!}r^{i}\nonumber \\
 & =\mathrm{id}+\left(\frac{1}{\lambda}+{k}_{0}'({f}_{0})\right)r+\sum_{i\geq2}\frac{{k}_{0}^{(i)}({f}_{0})}{i!}r^{i}\nonumber \\
 & =\mathrm{id}+\frac{1}{\lambda}r+\sum_{i\geq1}\frac{{k}_{0}^{(i)}({f}_{0})}{i!}r^{i} . \nonumber 
\end{align}
Clearly, by definition \eqref{eq:Rg} of $R_{\beta,\mathbf m}^{n+1}$,
\begin{equation}
\left[\frac{1}{\lambda}r\right]_{\beta,\boldsymbol{m}}=\frac{1}{\lambda}R_{\beta,\boldsymbol{m}}^{n+1},\label{eq:simila1}
\end{equation}
and $(\beta,\mathbf{m})$ in $\sum_{i\geq1}\frac{{k}_{0}^{(i)}\left({f}_{0}\right)}{i!}r^{i}$
can be realized as follows: 
\begin{align}
(\beta,\boldsymbol{m}) & =(\gamma_{1},\boldsymbol{n}_{1})+\cdots+(\gamma_{i},\boldsymbol{n}_{i})+(1-i,\boldsymbol{v}),\nonumber \\
 & =(\gamma_{1},\boldsymbol{n}_{1})+(\gamma_{2}-1,\boldsymbol{n}_{2})+\cdots+(\gamma_{i}-1,\boldsymbol{n}_{i})+(0,\boldsymbol{v}),\label{eq:simila2}
\end{align}
where $(1-i,\boldsymbol{v})\in\mathrm{Supp}\big({k}_{0}^{(i)}({f}_{0})\big)$
and $(\gamma_{1},\boldsymbol{n}_{1}),\ldots,(\gamma_{i},\boldsymbol{n}_{i})\in\mathrm{Supp}({r})$.
Note that $\gamma_{j}>1,\ j=1,\ldots,i,$ and $\boldsymbol{v}>\mathbf{0}_{m}$.
Note that, in \eqref{eq:simila2}, we can subtract -1 from any $(i-1)$ elements $\gamma_{k}$,
$k=1,\ldots,i$. Therefore, it follows from \eqref{eq:simila2} that
\[
(\gamma_{1},\boldsymbol{n}_{1}),\ldots,(\gamma_{i},\boldsymbol{n}_{i})<(\beta,\boldsymbol{m}).
\]
Now \eqref{eq:stat2} follows from Neumann's Lemma, \eqref{eq:simila1}
and \eqref{eq:simila2}.
\end{proof}

\begin{rem}Section~\ref{subsec:convergence koenigs} shows that, after prenormalizing $f$, we can conclude in either of the following two ways. We can take the limit in the valuation topology of a Picard sequence, as described in Remark~\ref{rem:constr}. Alternatively, we can take the limit in the weak topology of a Koenigs sequence with an initial condition having the same first block as the one in the prenormalization. 
\end{rem}

\section{\label{sec:support normalization}The support of the normalizing
transseries}

In this section, we explain why the support of the normalization $\varphi$
of the hyperbolic transseries $f\in\mathcal{L}_{k}^{H}$ depends only on the support of the transseries $f$. To this end, we have to
check that the proof of case (a) and case (b) of the Main Theorem
goes through in the same way in subspaces of $\mathcal{L}_{k}$ with
restricted supports.\\

We introduce the following notation. For $W\subseteq\mathbb{R}_{\geq0}\times\mathbb{Z}^{k}$,
we define $\mathcal{L}_{k}^{W}\subseteq\mathcal{L}_{k}$, $k\in\mathbb{N}$,
as the set of all transseries from $\mathcal{L}_{k}$ whose support
belongs to $W$: 
\begin{equation*}
\mathcal{L}_{k}^{W}:=\left\{ {f}\in\mathcal{L}_{k}:\,\mathrm{Supp}({f})\subseteq W\right\} .
\end{equation*}
The spaces $\mathcal{L}_{k}^{\ge\beta}$, $\beta\ge0$, defined in
\eqref{eq:L greater equal than beta} are examples of spaces $\mathcal{L}_{k}^{W}$
for particular $W$ of the form $\left\{ \left(\alpha,n_{1},\ldots,n_{k}\right)\in\mathbb{R}_{\ge0}\times\mathbb{Z}^{k}:\alpha\ge\beta\right\} $.
Similarly, for $1\leq m\leq k$ and $V$ a well-ordered subset of
$\mathbf{0}_{m}\times\mathbb{Z}^{k-m+1}$, let
\begin{equation*}
\mathcal{B}_{m}^{V}:=\{{R}\in\mathcal{B}_{m},\ \mathrm{Supp}({R})\subseteq V\}\subseteq\mathcal{B}_{m}.
\end{equation*}
For $W\subseteq\mathbb{R}_{\ge0}\times\mathbb{Z}^{k}$, it is easy
to see that the space $\left(\mathcal{L}_{k}^{W},d_{z}\right)$ is
a closed subspace of $\left(\mathcal{L}_{k},d_{z}\right)$. Since
$\left(\mathcal{L}_{k},d_{z}\right)$ is complete by Proposition~\ref{prop:complete spaces},
then $\left(\mathcal{L}_{k}^{W},d_{z}\right)$ is complete as well.
The same holds, for similar reasons, for the metric spaces $\big(\mathcal{B}_{m}^{V},d_{m}\big)$,
$1\leq m\leq k$.\\

\begin{void}
\textbf{Support of $\varphi$ in case (a) of the Main Theorem}. In
view of understanding the support of $\varphi$ in case (a), we introduce
a well-ordered set $W_{\beta}\subset\mathbb{R}_{\ge0}\times\mathbb{Z}^{k}$
such that the space $\mathcal{L}_{k}^{W_{\beta}}$ is invariant under
the action of the operators $\mathcal{T}_{f}$ and $\mathcal{S}_{f}$
given in \eqref{eq:operators S and T}, in the following way. We first
consider the semigroup $W_{1}$ generated by 
\begin{align}
\mathrm{Supp}(f)\cup\big\{\left(\alpha-1,\mathbf m\right) & :\ (\alpha,\mathbf m)\in\mathrm{Supp}({f}-\lambda\cdot\mathrm{id})\big\}\label{eq:DefW1}\\
\text{} & \cup\left\lbrace (0,1,\ldots,0)_{k+1},\,\ldots,(0,\ldots,0,1)_{k+1}\right\rbrace .\nonumber 
\end{align}
Inductively, we introduce an increasing sequence of semigroups $W_{n}$,
$n\in\mathbb{N}_{\geq 1}$, where $W_{n+1}$ is generated by 
\begin{equation}
W_{n}\cup\left\lbrace \left(\beta_{1},\boldsymbol{m}_{1}\right)+\cdots+\left(\beta_{n+1},\boldsymbol{m}_{n+1}\right)-\left(n,\boldsymbol{0}_{k}\right):\left(\beta_{i},\boldsymbol{m}_{i}\right)\in W_{n},\ \beta_{i}>1\right\rbrace .\label{eq:DefW2}
\end{equation}
These sets are well-ordered by Neumann's Lemma. Finally, let
\begin{align}
W:=\bigcup_{n=0}^{\infty}W_{n}\text{ and }W_{\beta} & :=W\cap\big(\mathbb{R}_{\geq\beta}\times\mathbb{Z}^{k}\big).\label{eq:DefWbeta}
\end{align}
The following proposition states that the set $W_{\beta}$ is well-ordered.
\end{void}

\begin{prop}
\label{prop:well-ordered W} Let $\beta\in\mathbb{R}_{>1}$ and suppose
that $W_{\beta}$ is as in \eqref{eq:DefWbeta}. Then $W_{\beta}$
is a well-ordered set with the property that 
\begin{align}
\left(\beta_{1},\boldsymbol{m}_{1}\right)+\cdots+\left(\beta_{m},\boldsymbol{m}_{m}\right)-(m-1,\boldsymbol{0}_{k})\in W_{\beta},\label{eq:property W beta}
\end{align}
for all $\left(\beta_{1},\boldsymbol{m}_{1}\right),\ldots,\left(\beta_{m},\boldsymbol{m}_{m}\right)\in W_{\beta}$,
$m\geq2$. 
\end{prop}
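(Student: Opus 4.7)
The plan is to establish well-orderedness of $W_\beta$ by iterated application of Neumann's Lemma, and then to verify the additivity property \eqref{eq:property W beta} by an induction based on the recursive construction.

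First, I would prove by induction on $n$ that each $W_n$ is well-ordered. For $n=1$, the generating set $G_1$ in \eqref{eq:DefW1} is a union of three well-ordered sets---the well-ordered $\mathrm{Supp}(f)$, the order-isomorphic translate $\{(\alpha-1,\mathbf m):(\alpha,\mathbf m)\in\mathrm{Supp}(f-\lambda\cdot\mathrm{id})\}$, and a finite set of standard basis vectors---each element of which lies strictly above $\mathbf 0_{k+1}$ in the lex order (using $\mathrm{Lt}(f)=\lambda z$). Neumann's Lemma \cite{neumann:ordered_division_rings} then yields well-orderedness of $W_1=\langle G_1\rangle$. In the inductive step, well-orderedness of $W_n$ passes to the subset $W_n^{>1}$ and to $\langle W_n^{>1}\rangle$ by Neumann; the new generating set $G_{n+1}$ is a translate of a subset of this semigroup and is thus well-ordered. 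A final application of Neumann's Lemma to the well-ordered strictly positive set $W_n\cup G_{n+1}$ yields well-orderedness of $W_{n+1}$.

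Second, I would deduce well-orderedness of $W$, and hence of $W_\beta$, by controlling how first coordinates grow across layers. Let $\mu:=\min\bigl(\pi_1(W_1)\cap(1,\infty)\bigr)$; this exists because $\pi_1(W_1)$ is a well-ordered subset of $\mathbb R_{\geq 0}$ (projection of the well-ordered $W_1$) and $\pi_1(W_1)\cap(1,\infty)$ is non-empty (e.g.\ $(2,\mathbf 0_k)\in W_1$), and clearly $\mu>1$. By a short induction, $\min\bigl(\pi_1(W_n)\cap(1,\infty)\bigr)=\mu$ for every $n$, because any element of $W_{n+1}\setminus W_n$ involves at least one summand from $G_{n+1}$ and hence has first coordinate at least $(n+1)\mu-n=n(\mu-1)+\mu>\mu$. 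Consequently, for any bound $B$, the set $W\cap(\mathbb R_{\leq B}\times\mathbb Z^k)$ is contained in $W_N$ for every $N$ with $N(\mu-1)+\mu>B$, and is therefore well-ordered. Given a non-empty $A\subseteq W_\beta$, picking $a_0\in A$ and letting $B:=\pi_1(a_0)$ confines the minimum of $A$ to $A\cap W_N$ for such an $N$, where it exists by well-orderedness of $W_N$.

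Third, I would verify the additivity property \eqref{eq:property W beta}. The first-coordinate check $1+\sum_i(\beta_i-1)\geq 1+m(\beta-1)>\beta$ is immediate, so only membership in $W$ remains. Choose $N$ minimal such that every $(\beta_i,\mathbf m_i)\in W_N^{>1}$. If $m\geq N+1$, then $W_N\subseteq W_{m-1}$ places each $(\beta_i,\mathbf m_i)$ in $W_{m-1}^{>1}$, and \eqref{eq:DefW2} applied with $n:=m-1$ gives $\sum_i(\beta_i,\mathbf m_i)-(m-1,\mathbf 0_k)\in W_m\subseteq W$ directly. The remaining case $m\leq N$ is the main obstacle: here I would pad the $m$-fold sum into the $(N+1)$-fold expression $\sum_{i=1}^m(\beta_i,\mathbf m_i)+(N+1-m)(\beta_1,\mathbf m_1)-(N,\mathbf 0_k)$, which lies in $W_{N+1}\subseteq W$ by \eqref{eq:DefW2}; this overshoots the target by $(N+1-m)(\beta_1-1,\mathbf m_1)$, and the delicate point is to absorb this discrepancy by an induction on $N$, exploiting the semigroup structure of $W$ together with the fact that $(1,\mathbf 0_k)\in\mathrm{Supp}(f)\subseteq W$.
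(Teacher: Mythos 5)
Your well-orderedness argument (the first two parts) is correct and essentially the same as the paper's: both rest on the observation that the minimum $\mu$ of the first coordinates exceeding $1$ is the same across all layers $W_n$, so that new elements of $W_{n+1}\setminus W_n$ have first coordinate at least $n(\mu-1)+\mu$, which tends to $+\infty$. You conclude by trapping bounded $\pi_1$-slices inside a fixed $W_N$, whereas the paper directly compares the layer-minima $\mathbf{w}_n$ against $n\cdot\mathbf{w}-(n-1,\mathbf{0}_k)$, but the substance is identical, as is the iterated use of Neumann's Lemma to get well-orderedness of each $W_n$.

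The third part, however, contains a genuine gap which you only partly acknowledge. Padding the $m$-fold sum to the $(N+1)$-fold expression $\sum_{i=1}^m(\beta_i,\mathbf{m}_i)+(N+1-m)(\beta_1,\mathbf{m}_1)-(N,\mathbf{0}_k)\in W_{N+1}$ does not help: the overshoot $(N+1-m)\bigl((\beta_1,\mathbf{m}_1)-(1,\mathbf{0}_k)\bigr)$ cannot be removed, since $W$ is only a semigroup, and $(1,\mathbf{0}_k)$ cannot serve as one of the padding summands in \eqref{eq:DefW2} because that formula requires each $\beta_i>1$. (The paper merely asserts that \eqref{eq:property W beta} ``follows directly'' from \eqref{eq:DefW2}; the step is in fact not immediate, but a correct argument does exist.) The right approach is first to reduce to $m=2$ by composing pairwise, noting that the first coordinate of each partial result stays $>\beta>1$, and then to prove by induction on $n$ that $\mathbf{u}+\mathbf{v}-(1,\mathbf{0}_k)\in W_{n+1}$ whenever $\mathbf{u},\mathbf{v}\in W_n$ have first coordinate $>1$. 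For $n=1$ this is a defining generator of $W_2$. For $n\geq2$, if both $\mathbf{u},\mathbf{v}\in W_{n-1}$ use the inductive hypothesis; otherwise, say $\mathbf{u}\in W_n\setminus W_{n-1}$, so $\mathbf{u}=\mathbf{a}_1+\mathbf{a}'$ with $\mathbf{a}'\in W_n$ (or empty) and $\mathbf{a}_1=\sum_{i=1}^n\mathbf{c}_i-(n-1,\mathbf{0}_k)$ a generator of the new part of $W_n$, each $\mathbf{c}_i\in W_{n-1}$ with first coordinate $>1$; then $\mathbf{a}_1+\mathbf{v}-(1,\mathbf{0}_k)=\mathbf{c}_1+\cdots+\mathbf{c}_n+\mathbf{v}-(n,\mathbf{0}_k)$ is itself a generator of $W_{n+1}$ in the sense of \eqref{eq:DefW2}, and adding $\mathbf{a}'$ keeps one in $W_{n+1}$. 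The point is to absorb the subtracted $(1,\mathbf{0}_k)$ into the constant $(n,\mathbf{0}_k)$ by promoting an $n$-fold generator to an $(n{+}1)$-fold one, not to cancel it by an element of $W$.
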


\begin{proof}
Property \eqref{eq:property W beta} follows directly from \eqref{eq:DefW2}
and the fact that $W$ is the increasing union of the sets $W_{n}$,
$n\in\mathbb{N}.$ 

Hence, we only need to prove that $W_{\beta}$ is well-ordered. Since
$W_{\beta}=(\mathbb{R}_{\geq\beta}\times\mathbb{Z}^{k})\cap W$, it
is sufficient to prove that $W$ is well-ordered. In general, an increasing union
of well-ordered sets may not be well-ordered. So we give a proof based
on the specific properties of the sets $W_{n}$. Let $A$ be a nonempty
subset of $W$, and let us prove that $A$ admits a minimal element.

Set $W_{0}:=\emptyset$ and let $I$ be the set of all $n\in\mathbb{N}$
such that $A\cap\left(W_{n}\setminus W_{n-1}\right)\neq\emptyset$.
Let $\mathbf{w}_{n}:=\min\big(A\cap\left(W_{n}\setminus W_{n-1}\right)\big),\ n\in I$.
Such a minimum exists because the sets $W_{n}$, for $n\in I$, are
well-ordered. We have now constructed a sequence $(\mathbf{w}_{n})_{n}$
of minimal elements of the sets $A\cap(W_{n}\setminus W_{n-1})$,
$n\in I$. Clearly, $\min A=\min\{\mathbf{w}_{n}:n\in I\}.$ Therefore,
it is enough to prove that the family $\{\mathbf{w}_{n}:n\in I\}$
has a smallest element. Note that, by \eqref{eq:DefW2}, $\min(W_{n}\cap(\mathbb{R}_{>1}\times\mathbb{Z}^{k}))=\min(W_{n+1}\cap(\mathbb{R}_{>1}\times\mathbb{Z}^{k})),n\in\mathbb{N}$,
and, therefore, $\min(W\cap(\mathbb{R}_{>1}\times\mathbb{Z}^{k}))=\min(W_{1}\cap(\mathbb{R}_{>1}\times\mathbb{Z}^{k}))$.
Now let 
\begin{equation}
\mathbf{w}:=\min(W_{1}\cap(\mathbb{R}_{>1}\times\mathbb{Z}^{k})).\label{eq:tr}
\end{equation}
Take $m_{0}\in I$. By Archimedes' Axiom and since the first coordinate of $\mathbf{w}$ is strictly greater than $1$,
there exists $n_{0}\geq m_{0}$, such that $n\cdot\mathbf{w}-(n-1,\boldsymbol{0}_{k})>\mathbf{w}_{m_{0}}$,
for all $n>n_{0}$. By \eqref{eq:tr}, $\mathbf{w}_{n}\geq n\cdot\mathbf{w}-(n-1,\boldsymbol{0}_{k})>\mathbf{w}_{m_{0}}$,
for every $n\in I$, $n>n_{0}$. This implies that 
\[
\min A=\min\left\lbrace \mathbf{w}_{n}:n\in I\right\rbrace =\min\big(\left\lbrace \mathbf{w}_{i}:i\in I,i\leq n_{0}\right\rbrace \cup\{\mathbf{w}_{m_{0}}\}\big).
\]
The latter set is finite, therefore, the minimum exists.
\end{proof}
In order to obtain the description of $\mathrm{Supp}\left(\varphi\right)$ we proceed as
follows. It is enough to check that the proof of Lemma \ref{lem:properties Sf Tf }
works the same if, in Lemma \ref{lem:properties Sf Tf }, we replace
$\mathcal{L}_{k}^{\ge\beta}$ by $\mathcal{L}_{k}^{W_{\beta}}$, where
$\beta:=\mathrm{ord}_{z}\left(f-f_{0}\right)$. First, we easily check,
by Proposition~\ref{prop:well-ordered W} and by Taylor's expansion (Proposition \ref{prop:formal Taylor}) of the operators $\mathcal{S}_{f}$ and $\mathcal{T}_{f}$
given in \eqref{eq:operators S and T}, that they
leave the spaces $\mathcal{L}_{k}^{W_{\beta}}$ invariant. Then we
have to prove that $\mathcal{T}_{f}$ is a surjection on $\mathcal{L}_{k}^{W_{\beta}}$.
That is, for a given block $z^{\gamma}M_{\gamma}\in\mathcal{L}_{k}^{W_{\beta}}$,
we need to prove that its preimage by $\mathcal{T}_{f}$ belongs to
$\mathcal{L}_{k}^{W_{\beta}}$ as well.

To this end, define, for a well-ordered subset $V$ of $\left\{ 0\right\} \times\mathbb{Z}^{k}$,
the set
\[
H\left(V\right):=\left\langle V\cup\mathrm{Supp}\left(z^{-1}g_{0}\right)\cup\left\{ \left(0,1,0,\ldots,0\right)_{k+1},\ldots,\left(0,0,\ldots,1\right)_{k+1}\right\} \right\rangle .
\]
$H\left(V\right)$ is also well-ordered, by Neumann's Lemma. It is
easy to see that 
\[
\mathcal{B}_{1}^{H\left(\mathrm{Supp}\left(M_{\gamma}\right)\right)}\subseteq\mathcal{B}_{1}
\]
is invariant under the action of $\mathcal{S}_{1}$, where $\mathcal{S}_{1}$
is as defined in \eqref{eq:S1}.
Note that $z^{-1}g_{0}$ is nothing but $Q$ in the proof of
Lemma \ref{lem:properties Sf Tf }. As the space $\left(\mathcal{B}_{1}^{H\left(\mathrm{Supp}\left(M_{\gamma}\right)\right)},d_{1}\right)$
is complete, it follows from the proof of Lemma \ref{lem:properties Sf Tf }
that $\mathcal{S}_{1}$ has a unique fixed point in $\mathcal{B}_{1}^{H\left(\mathrm{Supp}\left(M_{\gamma}\right)\right)}$.
Hence the preimage $z^{\gamma}H_{\gamma}$ of $z^{\gamma}M_{\gamma}$
by $\mathcal{T}_{f}$ belongs to $\mathcal{L}_{k}^{W_{\beta}}$.      

Therefore we can apply Lemma~\ref{lem:properties Sf Tf } to restricted spaces $\mathcal L_k^{W_\beta}$ instead of $\mathcal L_k^{\geq \beta}$ to conclude that $\varphi-\mathrm{id}\in \mathcal L_k^{W_\beta}$.
\begin{void}
\textbf{Support of $\varphi$ in case (b) of the Main Theorem.}
\end{void} 

Let $\varphi _{0}$ be the \emph{prenormalizing} transformation of transseries $f=f_{0}+\mathrm{h.o.t.}$ (where $f$ and $f_{0}$ are as defined in the Main Theorem), which contains only the leading block. It is easy to see that there exists a well-ordered set $W_0\subseteq \{0\}\times \mathbb Z^k$ such that the support of $z^{-1}(\varphi_{0}-\mathrm{id})$ belongs to $W_0$, which depends only on the leading block of the initial transseries $f$. Indeed, by Remark~\ref{rem:constr}, it is obtained as the limit of a Picard sequence with contraction operator depending only on the leading block of $f$, where initial condition $H_0\in \mathcal B_{\geq 1}^+$ can be chosen arbitrarily.                    
Let now $\beta>1$ be such that $\beta:=\mathrm{ord}_{z}(\varphi_{0}\circ f\circ\varphi_{0}^{-1}-f_0)$. Let us define
\begin{equation}\label{eq:Wbeta}
\widetilde{W}_\beta:=\Big\langle W_0\cup W_\beta\cup\{(0,1,\ldots,0)_{k+1},\ldots,(0,\ldots, 0,1)_{k+1}\}\Big\rangle\cap (\mathbb R_{\geq \beta}\times \mathbb Z^k),
\end{equation}
where $W_{\beta}$ is as defined in \eqref{eq:DefWbeta}, and initial $f$ is used
in definition \eqref{eq:DefW1} of $W_{1}$.
It can be checked by Taylor's expansion (Proposition \ref{prop:formal Taylor}) that $$\mathrm{Supp}(z^{-1}(\varphi_0^{-1}-\mathrm{id}))\in \Big\langle W_0\cup\{(0,1,\ldots,0)_{k+1},\ldots,(0,\ldots, 0,1)_{k+1}\}\Big\rangle.$$ Then, for $f_1:=\varphi_{0}\circ f\circ\varphi_{0}^{-1}$,
\[
f_1-f_0\in \mathcal L_k^{\widetilde{W}_\beta}.
\] It can be checked that the set $\widetilde{W}_\beta$ satisfies property \eqref{eq:property W beta} from Proposition~\ref{prop:well-ordered W} and, by the same reasoning as in case $(a)$, that $\mathcal L_k^{\widetilde{W}_\beta}$ is invariant under $\mathcal S_{f_1}$ and $\mathcal T_{f_1}$. Therefore \emph{normalization}
$\varphi_{1}$ reducing $f_1=\varphi_{0}\circ f\circ\varphi_{0}^{-1}$
to the normal form $f_{0}$ belongs to $\varphi_{1}-\mathrm{id}\in\mathcal{L}_{k}^{{\widetilde{W}_\beta}}$.

Finally, putting
\begin{equation}\label{eq:W0}
\widetilde{W}_0=(1,\mathbf 0_k)+W_0,
\end{equation}
for the composition $\varphi:=\varphi_{1}\circ\varphi_{0}$
it holds that $\varphi-\mathrm{id}\in\mathcal{L}_{k}^{\widetilde{W}_0\cup \widetilde W_\beta}$.
\\

To conclude, we have proven in this section the following proposition: 
\begin{prop}[Support of the normalizing transformation $\varphi$]
Let $f(z)=\lambda z+\mathrm{h.o.t.}\in\mathcal{L}_{k}$, $0< \lambda <1$, and let
$\varphi\in\mathcal{L}_{k}^{0}$ be the normalization of $f$ to its
normal form $f_{0}$ from the Main Theorem. In case $($a$)$ of the Main
Theorem, let $f_{1}:=f$. In case $($b$)$, let $f_{1}\in\mathcal{L}_{k}$
be the transseries obtained from $f$ after prenormalization. Then 
\[
\varphi-\mathrm{id}\in\mathcal{L}_{k}^{\widetilde{W}_0\cup \widetilde{W_{\beta}}},
\]
where $\beta:=\mathrm{ord}_{z}(f_1-f_{0})$, $\beta>1$, $\widetilde{W_{\beta}}$ and 
$\widetilde W_0$ as defined in \eqref{eq:Wbeta} and \eqref{eq:W0}. Here, in case $(a)$ of the Main Theorem
we simply put $W_{0}:=\emptyset$.

In particular, the support of normalization depends only on the support
of initial transseries $f$. 
\end{prop}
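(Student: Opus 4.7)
The plan is to revisit the fixed-point arguments of Sections~\ref{subsec:proof case a} and \ref{subsec:proof case b} and verify that each step remains valid when the ambient spaces $\mathcal{L}_k^{\geq\beta}$, $\mathcal{B}_{\geq 1}^+$, etc., are replaced by the subspaces consisting of transseries whose supports sit inside a prescribed well-ordered set. The crucial observation enabling this is that, for any well-ordered $W\subseteq\mathbb{R}_{\geq 0}\times\mathbb{Z}^k$, the subspace $(\mathcal{L}_k^W,d_z)$ is closed in the complete space $(\mathcal{L}_k,d_z)$, hence itself complete; the same holds for $(\mathcal{B}_m^V,d_m)$. Consequently, once a well-ordered $W$ is exhibited which is preserved by the operators in question, Proposition~\ref{prop:krasno fixed point} will automatically produce the fixed point inside $\mathcal{L}_k^W$, so that the support of the resulting $\varphi-\mathrm{id}$ is controlled.

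In case (a), I would take as candidate set $W_\beta$ defined in~\eqref{eq:DefWbeta}. Proposition~\ref{prop:well-ordered W} already establishes that $W_\beta$ is well-ordered and closed under the combinations appearing in \eqref{eq:property W beta}. Using this closure, together with Taylor's formula (Proposition~\ref{prop:formal Taylor}), I would verify that the operators $\mathcal{S}_f,\mathcal{T}_f$ from \eqref{eq:operators S and T} send $\mathcal{L}_k^{W_\beta}$ into itself: applications of $\frac{d}{dz}$ or $D_1$ shift exponents in a way absorbed by the generators $\{(0,1,0,\ldots,0)_{k+1},\ldots,(0,\ldots,0,1)_{k+1}\}$ built into $W_1$, and composition terms produce exponent sums of exactly the form \eqref{eq:property W beta}. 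For the surjectivity of $\mathcal{T}_f$ onto $\mathcal{L}_k^{W_\beta}$, I would rerun the internal contraction argument of Lemma~\ref{lem:properties Sf Tf } inside the closed (hence complete) block subspace $\mathcal{B}_1^{H(\mathrm{Supp}(M_\gamma))}$, noting that the operator $\mathcal{S}_1$ of~\eqref{eq:S1} preserves this subspace since its ingredients ($R_0=z^{-1}g_0$, $D_1$, $M_\gamma$) all have supports inside $H(\mathrm{Supp}(M_\gamma))$.

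In case (b), I would first analyze the prenormalization. The Picard iteration producing $H\in\mathcal{B}_{\geq 1}^+$ via Lemmas~\ref{lem:fixed point T0 S0}--\ref{lem:hypotheses T0 S0} involves only $R$ and $R_0$ (coming from the leading block of $f-\lambda z$), the derivations $D_m$, the formal exponential of Lemma~\ref{lem:exponential}, and the antiderivations in $\boldsymbol{\ell}_m$ from Lemma~\ref{lem:primitive}. Each of these preserves the closed subspace $\mathcal{B}_{\geq 1}^{+,W_0}$, where $W_0\subseteq\{0\}\times\mathbb{Z}^k$ is the well-ordered (by Neumann's Lemma) semigroup generated by $\mathrm{Supp}(\mathrm{Lb}_z(f-\lambda z))$ together with $\{(0,1,0,\ldots,0)_{k+1},\ldots,(0,\ldots,0,1)_{k+1}\}$. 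This places $\varphi_0-\mathrm{id}$ inside $\mathcal{L}_k^{\widetilde{W}_0}$. A Taylor expansion of $f_1:=\varphi_0\circ f\circ\varphi_0^{-1}$, combined with the analogous control on $\mathrm{Supp}(\varphi_0^{-1}-\mathrm{id})$, then places $f_1-f_0$ inside $\mathcal{L}_k^{\widetilde{W}_\beta}$ with $\widetilde{W}_\beta$ as in~\eqref{eq:Wbeta}. Applying case (a) to $f_1$ gives $\varphi_1-\mathrm{id}\in\mathcal{L}_k^{\widetilde{W}_\beta}$, and a final Taylor expansion of $\varphi=\varphi_1\circ\varphi_0$ yields $\varphi-\mathrm{id}\in\mathcal{L}_k^{\widetilde{W}_0\cup\widetilde{W}_\beta}$.

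The main obstacle I expect is the combinatorial bookkeeping: each operator (Taylor composition, iterated derivation, formal exponential, antiderivation with respect to $\boldsymbol{\ell}_m$) shifts supports in a subtly different way, and verifying that all shifts remain inside the prescribed set reduces to checking the semigroup closure property \eqref{eq:property W beta}, which was precisely the purpose of the recursive construction of the $W_n$. Once this combinatorial verification is carried out, the topological part (completeness, invariance, existence and uniqueness of the fixed points) is automatic from the already-proven lemmas, and the final clause that $\mathrm{Supp}(\varphi-\mathrm{id})$ depends only on $\mathrm{Supp}(f)$ follows because $W_0$, $W_1$, and the recursive $W_n$ are defined solely in terms of $\mathrm{Supp}(f)$; the arbitrary choice of initial condition in the Picard iterations is irrelevant thanks to uniqueness of the fixed points.
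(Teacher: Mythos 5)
Your proposal follows the paper's argument almost step for step: use closedness (hence completeness) of the restricted subspaces $\mathcal{L}_k^W$ and $\mathcal{B}_m^V$, verify with Taylor's expansion and Proposition~\ref{prop:well-ordered W} that the operators $\mathcal{S}_f,\mathcal{T}_f$ (resp.\ $\mathcal{S}_0,\mathcal{T}_0$ and $\mathcal{S}_1$) preserve these subspaces, rerun the fixed-point arguments there, and in case (b) compose $\varphi=\varphi_1\circ\varphi_0$ to combine the two support estimates. One small caveat: you propose an explicit description of $W_0$ as the semigroup generated by $\mathrm{Supp}(z^{-1}\mathrm{Lb}_z(f-\lambda z))$ and the elementary increments $\{(0,1,0,\ldots)_{k+1},\ldots\}$; but the solution of the linear block equations in Lemma~\ref{lem:differential equation Dm} requires antiderivation with respect to $\boldsymbol{\ell}_m$ (Lemma~\ref{lem:primitive}), which can lower the $\boldsymbol{\ell}_m$-exponent by one, an operation your semigroup is not a priori closed under; the paper sidesteps this by merely asserting, via Remark~\ref{rem:constr}, the existence of some well-ordered $W_0$ depending only on $\mathrm{Lb}_z(f)$ rather than exhibiting a closed formula.
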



\def\cprime{$'$}
\providecommand{\bysame}{\leavevmode\hbox to3em{\hrulefill}\thinspace}
\providecommand{\MR}{\relax\ifhmode\unskip\space\fi MR }
\providecommand{\MRhref}[2]{%
	\href{http://www.ams.org/mathscinet-getitem?mr=#1}{#2}
}
\providecommand{\href}[2]{#2}

\medskip

\emph{Addresses:}\

$^{1}$: University of Split, Faculty of Science, Ru\dj era Bo\v skovi\' ca 33, 21000 Split, Croatia, email: dino.peran@pmfst.hr

$^{2}$: University of Zagreb, Faculty of Science, Department of Mathematics, Bijeni\v cka 30, 10000 Zagreb, Croatia, email: maja.resman@math.hr
 
$^{3}$: Institut de Math\' ematiques de Bourgogne (UMR 5584 CNRS), Universit\' e de Bourgogne, Facult\' e des Sciences Mirande, 9 avenue Alain Savary, BP 47870, 21078 Dijon Cedex, France, email: jean-philippe.rolin@u-bourgogne.fr

$^{4}$: Universit\' e de Paris and Sorbonne Universit\' e, CNRS, IMJ-PRG, F-75006 Paris, France, email: tamara.servi@imj-prg.fr
\end{document}